\newcommand{\set}[1]{\ensuremath{\left\{#1\right\}}}
\newcommand{\abs}[1]{\ensuremath{|#1|}}
\newcommand{\isom}{\ensuremath{\cong}}
\newcommand{\inv}{\ensuremath{^{-1}}}
\newcommand{\Aut}{\textnormal{Aut}}
\newcommand{\closure}[1]{\overline{#1}}
\newcommand{\sm}{\ensuremath{\setminus}}
\newcommand{\es}{\ensuremath{\emptyset}}
\newcommand{\sub}{\subseteq}
\newcommand{\open}[1]{\mathring{#1}}
\theoremstyle{definition}
\newtheorem{Def}{Definition}[section]
\newtheorem{Exam}[Def]{Example}
\newtheorem{Rem}[Def]{Remark}
\theoremstyle{plain}
\newtheorem{Lem}[Def]{Lemma}
\newtheorem{Cor}[Def]{Corollary}
\newtheorem{Tm}[Def]{Theorem}
\newtheorem{Claim}[Def]{Claim}
\newcommand{\nat}{{\mathbb N}}
\newcommand{\BF}{\ensuremath{\mathcal B}}
\newcommand{\CF}{\ensuremath{\mathcal C}}
\newcommand{\EF}{\ensuremath{\mathcal E}}
\newcommand{\SF}{\ensuremath{\mathcal S}}
\newcommand{\TF}{\ensuremath{\mathcal T}}
\newcommand{\WF}{\ensuremath{\mathcal W}}
\begin{document}

\title{Transitivity conditions in infinite graphs}
\author{Matthias Hamann\and Julian Pott}
\date{Mathematisches Seminar\\Universit\"at Hamburg\\{\small Bundesstr. 55, 20146 Hamburg, Germany}}
\maketitle

\begin{abstract}
We study transitivity properties of graphs with more than one end. We completely classify the distance-transitive such graphs and, for all $k\ge 3$, the $k$-CS-transitive such graphs.
\end{abstract}

\section{Introduction}

A {\em $k$-distance-transitive graph} is a graph $G$ such that for every two pairs $(x_1,x_2)$ and $(y_1,y_2)$ of vertices  with distances $d(x_1,x_2)=d(y_1,y_2)\leq k$ there is an automorphism $\alpha$ of~$G$ with $x_i^\alpha=y_i$ for $i=1,2$, where $x_i^\alpha$ is the image of $x_i$ under $\alpha$.
A graph is called {\em distance-transitive} if it is $k$-distance-transitive for all $k\in\nat$.
Macpherson \cite{Ma} classified the connected locally finite distance-transitive graphs.
They are exactly the graphs $X_{k,l}$, the infinite graphs of connectivity $1$ such that each block is a complete graph on $k$ vertices and every vertex lies in $l$ distinct blocks.
Here, $k$ and $l$ are integers, but we shall use the notation of $X_{\kappa,\lambda}$ also when $\kappa$ or $\lambda$ are  infinite cardinals.

Answering a question of Thomassen and Woess \cite{TW}, M\"oller \cite{Mo} showed that the 2-distance-transitive locally finite connected graphs with more than one end are still only the graphs~$X_{k,l}$.

For graphs that are not locally finite, little is known.
Our first main result is the following common generalization of the theorems of Macpherson and M\"oller to arbitrary graphs with more than one end:

\begin{Tm}\label{mainTm}
Let $G$ be a connected infinite graph with more than one end. The following properties are equivalent:
\begin{enumerate}[(i)]
\item \label{mainTmFirst}$G$ is distance-transitive;
\item \label{mainTmSecond}$G$ is $2$-distance-transitive;
\item \label{mainTmThird}$G\isom X_{\kappa,\lambda}$ for some cardinals $\kappa$ and $\lambda$ with $\kappa,\lambda\geq 2$.
\end{enumerate}
\end{Tm}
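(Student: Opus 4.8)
The plan is to prove (i)$\Rightarrow$(ii)$\Rightarrow$(iii)$\Rightarrow$(i). The implication (i)$\Rightarrow$(ii) is immediate. For (iii)$\Rightarrow$(i) I would check directly that $X_{\kappa,\lambda}$ is distance-transitive whenever $\kappa,\lambda\ge 2$. In such a graph any two vertices $x,y$ are joined by a unique shortest path, which traverses a sequence of $d(x,y)-1$ cut vertices in the block-cut-vertex tree; the isomorphism type of the pair $(x,y)$ together with this path therefore depends only on $d(x,y)$. Given two such configurations at the same distance, one produces an automorphism of $X_{\kappa,\lambda}$ carrying one onto the other by a straightforward recursion outward along the tree, using that $\Aut(K_\kappa)$ is the full symmetric group on $\kappa$ points and that the underlying tree is $(\kappa,\lambda)$-biregular; hence $\Aut(X_{\kappa,\lambda})$ is transitive on pairs of vertices at any fixed distance. (In particular every such $X_{\kappa,\lambda}$ is infinite and has more than one end, its block-cut-vertex tree being infinite with all vertex degrees at least $2$.)

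The substantive implication is (ii)$\Rightarrow$(iii). First, a $2$-distance-transitive graph is vertex- and arc-transitive, so in particular the number $\mu$ of common neighbours of a pair of vertices at distance $2$ is the same for every such pair. If $G$ is locally finite, then (ii)$\Rightarrow$(iii) is exactly M\"oller's theorem \cite{Mo} (with $k,l\ge 2$, since $l=1$ would make $G$ one-ended); so I may assume $G$ is not locally finite. Then every vertex has infinite degree, and consequently every component of $G-S$ is infinite for each finite $S\subseteq V(G)$, as a finite component would have all of its vertices of finite degree.

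\textbf{Step 1: $G$ has a cut vertex.} Let $m$ be the least order of a finite separator of $G$; this is finite and positive because $G$ has more than one end. Fix such a separator $S$ with $\abs S=m$. Minimality forces every $s\in S$ to have a neighbour in each component of $G-S$, for otherwise $S\sm\{s\}$ would still separate $G$. Choosing $s\in S$ together with neighbours $u$ and $w$ of $s$ in two distinct components of $G-S$ yields $d(u,w)=2$ with every common neighbour of $u$ and $w$ lying in $S$, so $\mu\le m$. It remains to show $m=1$, and this is the main obstacle. Under the assumption $m\ge 2$ I would produce a pair of vertices at distance $2$ with more than $m$ common neighbours, contradicting $\mu\le m$: using vertex-transitivity one may take $S$ to contain a vertex of infinite degree, and then a careful analysis of the neighbourhoods that the vertices of $S$ send into the components of $G-S$, in the spirit of \cite{Mo} and exploiting the structure of minimal separators in a vertex-transitive graph with more than one end, produces such a pair via $2$-distance-transitivity. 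This is the point at which the locally finite argument of \cite{Mo} has to be strengthened, and I expect it to be the crux of the whole proof.

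\textbf{Step 2: block structure, and conclusion.} Once $G$ has a cut vertex, vertex-transitivity makes every vertex a cut vertex, so $G$ has connectivity $1$; let $T$ be its block-cut-vertex tree. Vertex- and arc-transitivity give that all blocks of $G$ are isomorphic to a single graph $B$ with $\abs B=\kappa\ge 2$, and that every vertex lies in a fixed number $\lambda\ge 2$ of blocks. Finally $B$ is complete: if not, $B$ is $2$-connected and has two vertices at distance $2$ in $B$, so there are $x\in B$ and neighbours $y_1,y_2\in B$ of $x$ with $y_1\not\sim y_2$, whence $d(y_1,y_2)=2$; picking a neighbour $w\ne x$ of $x$ in a second block through $x$ gives $d(y_1,w)=2$ as well, since an edge $y_1w$ would create two distinct $y_1$--$w$ paths in $T$. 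An automorphism mapping $(y_1,y_2)$ to $(y_1,w)$ would carry $B$ onto a block containing both $y_1$ and $w$; but $y_1$ and $w$ are at distance $4$ in $T$ and hence lie in no common block --- a contradiction. So every block is $K_\kappa$ and every vertex lies in $\lambda$ blocks, i.e.\ $G\isom X_{\kappa,\lambda}$ with $\kappa,\lambda\ge 2$, which closes the cycle of implications. Step $2$ and the verification needed for (iii)$\Rightarrow$(i) are routine once the connectivity-$1$ structure is available.
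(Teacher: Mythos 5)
Your reductions (i)$\Rightarrow$(ii) and (iii)$\Rightarrow$(i) and your Step~2 are essentially fine, but Step~1 has a genuine gap at exactly the point you yourself flag as the crux: you never derive a contradiction from $m\ge 2$. Asserting that ``a careful analysis of the neighbourhoods \dots\ in the spirit of \cite{Mo} \dots\ produces such a pair'' is not an argument, and it is doubtful that a common-neighbour count alone can be made to work: M\"oller's proof of the locally finite case is itself not an elementary neighbourhood analysis but runs through Dunwoody's structure-tree theory for finite edge cuts, a tool that is unavailable here precisely because you have reduced to the non-locally-finite case. (A further soft spot: your $m$ is the least order of \emph{any} finite separator, whereas the separators relevant to the end structure are those separating two rays; these need not coincide, so even the framing of the intended contradiction is not quite right.) As it stands, the only implication carrying the content of the theorem is unproved.

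For comparison, the paper proves (ii)$\Rightarrow$(iii) uniformly, without splitting off the locally finite case or citing \cite{Mo}, by working with the Dunwoody--Kr\"on vertex-cut structure tree \cite{DK}. One fixes an $\Aut(G)$-invariant nested minimal cut system $\SF$ whose structure tree is basic, shows that every $\SF$-block is complete (two non-adjacent vertices in a block would let $2$-distance-transitivity map a distance-$2$ pair straddling a cut into that block, contradicting that a block lies on one side of every cut), then that distinct $\SF$-separators are disjoint, and finally that separators are singletons (otherwise edge-transitivity would map an edge inside a separator to an edge leaving it). The combinatorial invariant replacing your hoped-for counting is Lemma~\ref{FewCuts}: a fixed pair of vertices is properly separated by only finitely many separators of a given finite order, so two pairs separated by different numbers of such separators cannot be exchanged by an automorphism. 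To complete your Step~1 you would need some invariant of this kind; adopting the structure-tree framework is the natural way to obtain it.
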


A graph is called {\em $n$-transitive} if it has no cycle of length at most $n$ and for every two paths $x_0\ldots x_m$ and $y_0\ldots y_m$ with $0\leq m\leq n$ it admits an automorphism $\alpha$ with $x_i^\alpha=y_i$ for all~$i$.

Thomassen and Woess \cite{TW} characterized the locally finite connected graphs with more than one end that are $2$-transitive.
These are precisely the $r$-regular trees for some $r\in\nat$.
As a direct consequence of Theorem \ref{mainTm} we get the following characterization of all such graphs, no necessarily locally finite:

\begin{Cor}\label{CorOfMainTm}
If $G$ is a connected $2$-transitive graph with more than one end, then $G$ is a $\lambda$-regular tree for some cardinal $\lambda\geq 2$.\qed
\end{Cor}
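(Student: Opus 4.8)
The plan is to derive this directly from Theorem~\ref{mainTm}. The key observation is that every $2$-transitive graph is $2$-distance-transitive. To see this, let $(x_1,x_2)$ and $(y_1,y_2)$ be pairs of vertices with $d(x_1,x_2)=d(y_1,y_2)=k\le 2$. If $k=0$, then $x_1=x_2$ and $y_1=y_2$, and the case $m=0$ of $2$-transitivity --- which is just vertex-transitivity --- supplies an automorphism mapping $x_1$ to $y_1$. If $k=1$, then $x_1x_2$ and $y_1y_2$ are paths of length~$1$, so we apply the case $m=1$. If $k=2$, we pick a common neighbour $z$ of $x_1$ and $x_2$ and a common neighbour $w$ of $y_1$ and $y_2$; then $x_1zx_2$ and $y_1wy_2$ are honest paths of length~$2$ (their three vertices are pairwise distinct because $d(x_1,x_2)=2=d(y_1,y_2)$), so the case $m=2$ gives an automorphism $\alpha$ with $x_1^\alpha=y_1$, $z^\alpha=w$ and $x_2^\alpha=y_2$, and in particular $x_i^\alpha=y_i$ for $i=1,2$. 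Hence $G$ is $2$-distance-transitive, and since $G$ is connected with more than one end, Theorem~\ref{mainTm} yields $G\isom X_{\kappa,\lambda}$ for some cardinals $\kappa,\lambda\ge 2$.

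It then remains to show that $\kappa=2$. Indeed, if $\kappa=2$ then each block of $X_{\kappa,\lambda}$ is a single edge and every vertex lies in $\lambda$ such edges; as $X_{\kappa,\lambda}$ has connectivity~$1$, it is then precisely the $\lambda$-regular tree, which is the assertion. So suppose $\kappa\ge 3$. Choose a block $B$ and three distinct vertices $a,b,c\in B$; since $B$ is complete, $abc$ is a path of length~$2$ whose endpoints $a$ and $c$ are adjacent. On the other hand, using $\lambda\ge 2$, choose a cutvertex $v$ contained in two distinct blocks $B_1,B_2$ together with vertices $u\in B_1\sm\{v\}$ and $w\in B_2\sm\{v\}$; then $uvw$ is a path of length~$2$, but $u$ and $w$ are non-adjacent, since in $X_{\kappa,\lambda}$ two vertices are adjacent only if some block contains both, and the tree-like block--cutvertex structure of a connectivity-$1$ graph forbids a block containing both $u$ and $w$. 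An automorphism $\alpha$ with $a^\alpha=u$, $b^\alpha=v$ and $c^\alpha=w$ would have to carry the edge $ac$ to the non-edge $uw$, which is absurd. Thus $X_{\kappa,\lambda}$ fails to be $2$-transitive when $\kappa\ge 3$, forcing $\kappa=2$.

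The argument is essentially routine once Theorem~\ref{mainTm} is available; the only place that warrants a moment's thought is the non-adjacency of the vertices $u$ and $w$ above, which rests on the block--cutvertex tree structure of $X_{\kappa,\lambda}$ rather than on the girth clause in the definition of $2$-transitivity (which is vacuous for simple graphs). In particular, triangles are not excluded by that clause, which is why one must argue via the impossibility of mapping a path with adjacent endpoints onto a geodesic of the same length.
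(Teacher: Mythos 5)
Your proof is correct and follows essentially the same route as the paper: deduce $2$-distance-transitivity from $2$-transitivity, invoke Theorem~\ref{mainTm} to get $G\isom X_{\kappa,\lambda}$, and rule out $\kappa\ge 3$ by mapping a length-$2$ path inside a block (adjacent endvertices) onto a length-$2$ path through a cutvertex (endvertices at distance $2$). You merely spell out the details that the paper leaves implicit, such as the case analysis for $m\le 2$ and the non-adjacency of $u$ and $w$.
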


In the second part of this paper we investigate graphs with the property that the existence of an isomorphism $\varphi$ between two finite induced subgraphs implies that there is an automorphism $\psi$ of the entire graph mapping one of the subgraphs to the other.
This area divides into two parts: In one part $\varphi$ has to induce $\psi$ on these subgraphs, while in the other part they may differ.
More precisely, a graph $G$ is $k$-CS-transitive if for every two {\bf c}onnected isomorphic induced {\bf s}ubgraphs of order $k$ some isomorphism between them extends to an automorphism of $G$.
On the other hand, $G$ is called \emph{$k$-CS-homogeneous} if {\em every} isomorphism between two induced connected subgraphs of order $k$ of $G$ extends to an automorphism of $G$.
A graph is \emph{CS-transitive} if it is $k$-CS-transitive for all $k\in\nat$, and \emph{CS-homogeneous} if it is $k$-CS-homogeneous for all $k\in\nat$.
Furthermore, a graph is {\em end-transitive} if its automorphism group acts transitively on the set of its ends.

Gray \cite{G} classified the connected locally finite $3$-CS-transitive graphs with more than one end and showed that these graphs are end-transitive.
He asked whether all locally finite $k$-CS-transitive graphs are end-transitive.
We give a positive answer to his question, and also show that the ends of $k$-CS-transitive graphs of arbitrary cardinality have at most two orbits under the action of the automorphism group of the graph.

Since $1$-CS-transitive graphs are the transitive graphs and $2$-CS-transitive graphs are the edge-transitive graphs, there is not much hope to classify them.
Thus we investigate $k\ge3$.
We shall give a complete classification of these $k$-CS-transitive graphs with more than one end.
This is formulated in Theorem~\ref{mainTm2}.

\mbox{}

In order to state our characterization we have to introduce some classes of graphs.
Let $H, H_1, H_2$ be finite graphs, and let $\kappa,\lambda\ge 2$ be cardinals. 
Let us construct the graph $Z_{\kappa,\lambda}(H_1,H_2)$ for $\kappa,\lambda\ge 2$ as follows. Let $T$ be an infinite tree, viewed as a bipartite graph with bipartition $A,B$, and assume that the vertices in $A$ have degree $\kappa$ and the vertices in $B$ have degree $\lambda$. We replace every vertex from $A$ by an isomorphic copy of~$H_1$ and every vertex from $B$ by an isomorphic copy of~$H_2$. We add all edges between vertices that belong to graphs that replaced adjacent vertices. The resulting graph is a $Z_{\kappa,\lambda}(H_1,H_2)$.

Let $Y_\kappa$ denote a connected graph that has two different kinds of blocks, single edges and blocks of order~$\kappa$, and in which every  vertex lies in exactly one block of each kind.

Let $X_{\kappa,\lambda}(H)$  be the graph which arises from the graphs $X_{\kappa,\lambda}$ by replacing each vertex with a copy of $H$ and adding all edges between two copies replacing adjacent vertices of $X_{\kappa,\lambda}$.

We also need some finite homogeneous\footnote{{\em ultrahomogeneous} in \cite{Ga1}} graphs.
These are graphs $G$ such that any isomorphism between two finite induced subgraphs (not necessarily connected) extends to an automorphism of $G$.
These graphs were determined by Gardiner \cite{Ga1}.
Interestingly, Ronse [13] showed that the class of finite homogeneous graphs coincides with its `CS' counterpart, the class of graphs such that for any two isomorphic induced subgraphs, not necessarily connected, there {\em exists} an isomorphism between them that extends to an automorphism of the whole graph.

Another class of graphs featuring in our characterization will be a class, denoted as $\EF_{k,m,n}$, that occurs in \cite{E}.
It consists of all finite homogeneous graphs with the property that every vertex has at most $m$ neighbours, every subgraph of order at least $n$ is connected, and no two non-adjacent vertices have $k-2$ or more common neighbours.
Furthermore we exclude the complete graphs and the complements of complete graphs from $\EF_{k,m,n}$ for technical reasons.

Now we are able to state our second main result, the classification (for $k\ge 3$) of all k-CS-transitive graphs with more than one end.

\begin{Tm}\label{mainTm2}
Let $k\ge 3$. A connected graph with more than one end is $k$-CS-transitive if and only if it is isomorphic to one of the following graphs:
\begin{enumerate}[(1)]
\item\label{mainTm2First} $X_{\kappa,\lambda}(K^1)$ with arbitrary $\kappa$ and $\lambda$;
\item $X_{2,\lambda}(K^n)$ with arbitrary $\lambda$ and $n\le\frac{k}{2}$;
\item $X_{\kappa,2}(\closure{K^m})$ with arbitrary $\kappa$ and $m\le\frac{k}{3}$;
\item\label{mainTm2LastX} $X_{2,2}(E)$ with $E\in\EF_{k,m,n}$, $m\le k-2$ and $n\le \frac{k-|E|}{2}+1$;
\item $Y_\kappa$ with arbitrary $\kappa$ (if $k$ is odd);
\item\label{mainTm2FirstZ} $Z_{2,2}(\overline{K^m},K^n)$ with $2m+n\leq k+1$ (if $k$ is even);
\item $Z_{\kappa,\lambda}(K^1,K^n)$ with $n< k$, arbitrary $\kappa,\lambda$ with $\kappa=2$ or $\lambda=2$ (if $k$ is even);
\item\label{mainTm2Last} $Z_{2,2}(K^1,E)$ with $E\in\EF_{k,m,n}$, $m\leq k-2$, $n\leq\frac{k}{2}+1$ (if $k$ is even).
\end{enumerate}
\end{Tm}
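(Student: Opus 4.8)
The statement is an equivalence, and the eight listed graphs all have more than one end, so I would begin with the ``if'' direction, which is the routine half. Each graph $G$ in the list is assembled by a uniform tree-amalgamation: one starts from a $(\kappa,\lambda)$-biregular pattern --- either the biregular tree itself (for the $Z_{\kappa,\lambda}$-families) or the block graph $X_{\kappa,\lambda}$ (for the $X$- and $Y$-families) --- and plugs in a \emph{finite homogeneous} graph, namely one of $K^1$, $K^n$, $\overline{K^m}$, or a member of $\EF_{k,m,n}$, fully joining copies that sit over adjacent nodes. The resulting graph carries a canonical $\Aut(G)$-invariant tree-decomposition whose adhesion sets are finite and induce complete subgraphs. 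To check $k$-CS-transitivity one takes two isomorphic connected induced subgraphs $U_1,U_2$ of order at most $k$; the side conditions attached to each family (for instance $2m+n\le k+1$, $n\le\frac{k-|E|}{2}+1$, $m\le k-2$, $n<k$, $n\le\frac{k}{2}+1$) are precisely what bounds how far such a subgraph can stretch along the tree-decomposition. This reduces the problem to producing an isomorphism between subgraphs that lie inside a single part, or straddle a single adhesion set, which homogeneity of the finite building block supplies, after which the abundant automorphisms of the underlying biregular pattern extend it to all of $G$. The parity hypotheses ($k$ odd for $Y_\kappa$, $k$ even for the $Z$-families) drop out of the same analysis, by comparing which configurations of order $\le k$ can be housed within one part against those forced to cross an adhesion set.

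For the ``only if'' direction, let $G$ be connected, $k$-CS-transitive with $k\ge3$, and with more than one end. One first records the preliminary facts that $\Aut(G)$ has at most two orbits on $V(G)$ --- forming a bipartition in the two-orbit case --- and at most two orbits on the ends of $G$ (the latter being one of the results announced in the introduction); both follow from short arguments transporting connected order-$k$ subgraphs. The crucial structural step is then to produce an $\Aut(G)$-invariant tree-decomposition $(T,\VF)$ of $G$ of \emph{finite} adhesion, in which every adhesion set induces a complete subgraph and the torsos admit no proper invariant refinement of the same kind. That \emph{some} nontrivial invariant tree-decomposition exists is immediate, since $G$ has more than one end and transitivity propagates a finite separator; the real difficulty --- and, in my estimation, the main obstacle of the whole theorem --- is to show the adhesion can be taken finite and to control the torsos \emph{without} local finiteness, so that the finite parameter $k$ takes over the role played by local finiteness in the work of Macpherson, M\"oller and Gray. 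Granting this, the bound on the number of end-orbits forces the quotient $T/\Aut(G)$ to be a single vertex or a single edge, so $G$ is a tree-amalgamation, over a biregular tree, of one building block with complete adhesion sets --- giving the shape $X_{\kappa,\lambda}(H)$ --- or of two --- giving $Z_{\kappa,\lambda}(H_1,H_2)$, and its degeneration $Y_\kappa$ when one side reduces to single edges.

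It remains to identify the admissible building blocks and pin down the numerical constraints. Restricting $k$-CS-transitivity to connected induced subgraphs inside a single torso, respectively spanning two adjacent torsos, forces each torso to be a finite homogeneous graph (with the complete and edgeless cases recorded by the parameters of $K^n$ and $\overline{K^m}$), so Gardiner's classification limits the remaining torsos to $\EF_{k,m,n}$. Re-examining $k$-CS-transitivity for connected order-$k$ subgraphs that run along two or three consecutive torsos in $T$ --- comparing one lying on a single side of an adhesion set with one straddling it --- yields exactly the inequalities and parity conditions listed in (1)--(8), and shows they are sharp; the reconstruction above then matches each surviving configuration with one of the eight graphs. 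A convenient auxiliary device here is to contract each torso to a point: the quotient is again a connected graph with more than one end and enough transitivity that Theorem~\ref{mainTm} and the block-structure results apply, so the identification of the possible ``shapes'' can be partly outsourced to the first half of the paper. I expect this final case analysis to be long but essentially mechanical once the invariant finite-adhesion tree-decomposition is available.
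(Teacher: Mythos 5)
Your outline correctly identifies where the difficulty lies, but it leaves precisely that difficulty unproved. For the ``only if'' direction you write that the main obstacle is to produce an $\Aut(G)$-invariant tree-decomposition of finite adhesion without local finiteness, and then proceed ``granting this''. That is the heart of the matter: the paper obtains it from the Dunwoody--Kr\"on structure-tree theory for \emph{vertex} cuts (their Theorem~7.2, quoted here as Theorem~\ref{VertexCuts}, applied to the minimal cut system of Example~\ref{Example}), and even with that tool in hand a substantial amount of work remains before the shape $X_{\kappa,\lambda}(H)$, $Y_\kappa$ or $Z_{\kappa,\lambda}(H_1,H_2)$ emerges: one must show the structure tree has no leaves and $G$ has infinite diameter (Lemmas~\ref{noLeaves}, \ref{infDiam}), that distinct $\SF$-separators are disjoint and separators are completely joined to the rest of their blocks (Lemmas~\ref{SepDisj}, \ref{completerpartite}), and then run the dichotomy between empty and non-empty open blocks, using $k$-forks and $k$-spoons to rule out configurations. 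None of this is ``a short argument transporting connected order-$k$ subgraphs''. Moreover, one of your stated normalisations is simply false for the graphs in the list: you require every adhesion set to induce a \emph{complete} subgraph, but in $X_{\kappa,2}(\closure{K^m})$ the minimal separators induce $\closure{K^m}$, and in $X_{2,2}(E)$ they induce $E\isom C_5$ or $L(K_{3,3})$. Insisting on complete adhesion sets would exclude families (3), (4), (6) and (8) from the outset. The claims that $\Aut(G)$ has at most two vertex-orbits and at most two end-orbits are likewise consequences of the classification in the paper, not preliminaries available before it; and the proposed shortcut of contracting torsos and invoking Theorem~\ref{mainTm} is unjustified, since $k$-CS-transitivity of $G$ does not obviously give $2$-distance-transitivity of the quotient.

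The ``if'' direction also has a concrete gap. You reduce to isomorphic subgraphs ``that lie inside a single part, or straddle a single adhesion set'', but a connected induced subgraph of order $k$ can be an induced path meeting up to $k$ distinct building blocks (this happens already in $X_{\kappa,\lambda}(K^1)$, where there are no side conditions at all to prevent it). The paper handles this with the anchor technique: for subgraphs of diameter at least $3$ one fixes an induced path of length $3$ meeting four building blocks, checks it can be aligned compatibly with the orbit structure (a genuinely delicate parity argument), and then recursively extends an automorphism outward block by block (Lemma~\ref{rddiam3}). Your sketch contains no substitute for this step, and the numerical side conditions do not bound the spread of the subgraph in the way you suggest --- they instead rule out specific non-transportable configurations (forks, spoons, unbalanced complete subgraphs across a separator). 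So while the overall architecture you describe is in the right spirit, the proposal as written omits the structure-tree input, misstates the form of the separators, and does not prove the extension step needed for the easy direction.
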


Gray \cite{G} characterized the locally finite $3$-CS-homogeneous graphs with more than one end.
Since neither $Y_\kappa$ for $\kappa\ge 3$, nor $Z_{\kappa,\lambda}(H_1,H_2)$ for any distinct graphs $H_1$, $H_2$ or distinct cardinals $\kappa, \lambda$ are $k$-homogeneous, but any $k$-CS-transitive $X_{\kappa,\lambda}(H)$ is $k$-homogeneous for homogeneous finite graphs $H$, Theorem~\ref{mainTm2} allows us to extend Gray's theorem to arbitrary $k$ and graphs with arbitrary degrees, as follows:

\begin{Cor}\label{mainTm2Cor1}
Let $k\ge 3$. A connected graph with more than one end is $k$-CS-homogeneous if and only if it is isomorphic to $X_{\kappa,\lambda}(H)$ for one of the following values of $\kappa,\lambda$ and $H$:
\begin{enumerate}[(1)]
\item arbitrary $\kappa$ and $\lambda$ and $H=K^1$;
\item arbitrary $\kappa$, $\lambda=2$, $n\le\frac{k}{2}$ and $H=K^n$;
\item $\kappa=2$, arbitrary $\lambda$, $m\le\frac{k}{3}$ and $H=\overline{K^m}$;
\item $\kappa=2=\lambda$, $H\in\EF_{k,m,n}$ for $m\le k-2$ and $n\le\frac{k-|E|}{2}+1$.\qed
\end{enumerate}
\end{Cor}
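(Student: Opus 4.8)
The corollary is meant to be read off Theorem~\ref{mainTm2} together with the two facts recorded in the remark preceding it, so the plan is essentially to make that reading precise. For the ``if'' direction, suppose $G$ is one of the graphs $X_{\kappa,\lambda}(H)$ in~(1)--(4). By Theorem~\ref{mainTm2} it is $k$-CS-transitive, these being exactly the graphs of the form $X_{\kappa,\lambda}(H)$ occurring in items~(\ref{mainTm2First})--(\ref{mainTm2LastX}) of that theorem; and the finite graph $H$ --- one of $K^1$, $K^n$, $\overline{K^m}$, or a member of $\EF_{k,m,n}$ --- is homogeneous. By the remark, $G$ is therefore $k$-homogeneous, and every $k$-homogeneous graph is $k$-CS-homogeneous, since connected induced subgraphs of order $k$ are a special case of induced subgraphs of order $k$. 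This is the easy half.

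For the ``only if'' direction, let $G$ be a connected $k$-CS-homogeneous graph with more than one end. Then $G$ is $k$-CS-transitive, so by Theorem~\ref{mainTm2} it is isomorphic to one of the graphs in~(\ref{mainTm2First})--(\ref{mainTm2Last}). Every graph of the form $X_{\kappa,\lambda}(H)$ with $H$ finite homogeneous occurring in that classification already occurs among~(\ref{mainTm2First})--(\ref{mainTm2LastX}), and these are precisely the graphs listed in~(1)--(4) of the corollary. It therefore remains to show that the remaining graphs in Theorem~\ref{mainTm2} --- the graphs $Y_\kappa$ (for odd $k$) and the three families $Z_{2,2}(\overline{K^m},K^n)$, $Z_{\kappa,\lambda}(K^1,K^n)$, $Z_{2,2}(K^1,E)$ (for even $k$) --- are \emph{not} $k$-CS-homogeneous, except when they are isomorphic to a graph already in~(1)--(4). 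The only such exceptions are $Y_2$, the biregular tree $Z_{2,2}(K^1,K^1)$, and $Z_{2,2}(\overline{K^1},K^1)$, all of which equal $X_{2,2}(K^1)$ and hence appear in~(1); in every other case I claim the graph fails to be $k$-CS-homogeneous.

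The key point is that each of these graphs carries an $\Aut(G)$-invariant partition obstructing $k$-homogeneity: for $Y_\kappa$ with $\kappa\ge 3$ the splitting of the edge set into bridges and non-bridges (equivalently, of the vertex set into blocks of the two kinds), and for $Z_{\kappa,\lambda}(H_1,H_2)$ with $H_1\not\isom H_2$ or $\kappa\ne\lambda$ the splitting of the vertex set into copies of $H_1$ and copies of $H_2$ --- this is exactly the non-$k$-homogeneity invoked in the remark. To upgrade ``not $k$-homogeneous'' to ``not $k$-CS-homogeneous'' one must realise the obstruction inside a \emph{connected} induced subgraph of order $k$. One does this by embedding an induced path --- or a path carrying a small, symmetric piece of the copy at each end --- of order exactly $k$ into $G$ so that it is symmetric as an abstract graph while the reflection through its centre would have to exchange a bridge with a non-bridge, respectively a vertex lying in a copy of $H_1$ with a vertex lying in a copy of $H_2$. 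That reflection is then an isomorphism between two connected induced subgraphs of order $k$ which does not extend to an automorphism of $G$, because automorphisms preserve the invariant partition. The numerical conditions in Theorem~\ref{mainTm2}(\ref{mainTm2FirstZ})--(\ref{mainTm2Last}) (for instance $2m+n\le k+1$ in~(\ref{mainTm2FirstZ}) and $n<k$ in the item following it) are precisely what guarantees that $k$ is large enough for the required subgraph to fit, and several of the constructions needed here are already available from the analysis carried out in the proof of Theorem~\ref{mainTm2}.

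The main obstacle is exactly this last step: producing, for each of the families $Y_\kappa$ and $Z_{\kappa,\lambda}(\cdot,\cdot)$ and each admissible value of $k$, an explicit connected induced subgraph of order $k$ admitting a non-trivial graph-automorphism incompatible with the invariant partition above, and checking it is genuinely induced (in the $Z$-graphs the $B$-copies are cliques, which restricts how many vertices of a copy a path may use, so the path must alternate between the two sides in the right way). Away from this bookkeeping, the corollary is a formal consequence of Theorem~\ref{mainTm2} and the two facts from the remark: that a $k$-CS-transitive $X_{\kappa,\lambda}(H)$ with $H$ finite homogeneous is $k$-homogeneous, and that no $Y_\kappa$ with $\kappa\ge 3$ and no $Z_{\kappa,\lambda}(H_1,H_2)$ with $H_1\not\isom H_2$ or $\kappa\ne\lambda$ is $k$-homogeneous.
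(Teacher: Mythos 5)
Your proposal is correct and follows essentially the same route as the paper, which proves the corollary only by the remark preceding it: read the list off Theorem~\ref{mainTm2}, note that the $k$-CS-transitive $X_{\kappa,\lambda}(H)$ with $H$ finite homogeneous are $k$-homogeneous (hence $k$-CS-homogeneous), and that $Y_\kappa$ ($\kappa\ge 3$) and the $Z_{\kappa,\lambda}(H_1,H_2)$ with $H_1\not\isom H_2$ or $\kappa\ne\lambda$ are not. You are in fact slightly more careful than the paper in observing that the failure of $k$-homogeneity must be witnessed by \emph{connected} order-$k$ subgraphs (e.g.\ an induced path whose reversal would swap the two kinds of blocks or building blocks), a point the paper leaves implicit.
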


Gray and Macpherson \cite{GM} classified the countable CS-homogeneous graphs, as those described in our Theorem~\ref{mainTm}.
As a further corollary of Theorem \ref{mainTm2} we can extend their classification to arbitrary graphs with more than one end.

\begin{Cor}\label{mainTm2Cor3}
For connected graphs with at least two ends the notions of being distance-transitive, CS-transitive, or CS-homogeneous coincide.
(These graphs are described in Theorem~\ref{mainTm}.)
\qed
\end{Cor}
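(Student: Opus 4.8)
The plan is to deduce Corollary~\ref{mainTm2Cor3} from Theorem~\ref{mainTm2} together with Theorem~\ref{mainTm} and Corollary~\ref{mainTm2Cor1}, by showing that the three classes of graphs coincide for connected graphs with more than one end. One implication is essentially trivial: a distance-transitive graph is CS-homogeneous, and a CS-homogeneous graph is CS-transitive, simply by unwinding the definitions (distance-transitivity gives us automorphisms realizing \emph{any} prescribed isomorphism between finite induced subgraphs once we know it preserves distances, and the graphs $X_{\kappa,\lambda}$ are homogeneous). So the content is in the reverse direction: every connected CS-transitive graph with more than one end is already distance-transitive, i.e.\ isomorphic to some $X_{\kappa,\lambda}$ with $\kappa,\lambda\ge 2$ as in Theorem~\ref{mainTm}.

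First I would observe that a CS-transitive graph is in particular $k$-CS-transitive for every $k\ge 3$, so by Theorem~\ref{mainTm2} it must be isomorphic to one of the graphs in the list~\eqref{mainTm2First}--\eqref{mainTm2Last}, \emph{for every} $k\ge 3$ simultaneously. The strategy is then to go through the list and rule out every graph that fails to be $k$-CS-transitive for some larger $k$, keeping only those that survive for all $k$. For each family the parameters are bounded in terms of $k$ (for instance $n\le k/2$, $m\le k/3$, $2m+n\le k+1$, $n<k$, $|E|$ finite with degree and connectivity constraints), so a fixed graph in any of these families satisfies the constraint only for $k$ up to some finite threshold — unless the "nontrivial" parameters are as small as possible. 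Concretely: in family~\eqref{mainTm2First} we always have $H=K^1$; in family~(2) with $H=K^n$, letting $k\to\infty$ forces nothing on $n$ but we must check it is CS-transitive for \emph{all} $k$, and in fact $X_{2,\lambda}(K^n)$ fails to be $(2n{+}1)$-CS-transitive since we can find a connected induced subgraph on $2n+1$ vertices spanning two "blocks" that has no image across a single block; similarly $X_{\kappa,2}(\overline{K^m})$ and the $Z$- and $Y$-families each contain configurations that break under sufficiently large $k$ unless the extra structure ($K^n$, $\overline{K^m}$, $E$, the edge-block of $Y_\kappa$) degenerates. The finitely many homogeneous graphs $E\in\EF_{k,m,n}$ are excluded outright for large $k$ because $|E|$ is a fixed finite number while the constraint $n\le\frac{k-|E|}{2}+1$ together with the requirement that $E$ be $k$-CS-transitive as a subgraph-pattern cannot persist.

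After this elimination the only graphs that remain CS-transitive for all $k$ are the graphs $X_{\kappa,\lambda}(K^1)=X_{\kappa,\lambda}$ with $\kappa,\lambda\ge 2$, which are exactly the graphs of Theorem~\ref{mainTm}\eqref{mainTmThird}; by that theorem they are distance-transitive. For CS-homogeneity the same argument applies via Corollary~\ref{mainTm2Cor1} in place of Theorem~\ref{mainTm2}: its list already consists only of graphs $X_{\kappa,\lambda}(H)$, and the same degeneration argument kills every $H\ne K^1$ and every restriction $\lambda=2$ or $\kappa=2$ when we let $k$ grow, leaving $X_{\kappa,\lambda}$. This closes the cycle of implications.

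The main obstacle, and the part requiring genuine care rather than bookkeeping, is verifying that each non-$X_{\kappa,\lambda}$ graph in the Theorem~\ref{mainTm2} list genuinely fails $k$-CS-transitivity for large $k$: one must exhibit, in each such graph, two connected isomorphic induced subgraphs of some order $k$ that no automorphism can interchange. For the $Z_{\kappa,\lambda}(H_1,H_2)$ graphs with $H_1\ne H_2$ (or $\kappa\ne\lambda$) the asymmetry between the two sides of the underlying tree is the lever; for $Y_\kappa$ with $\kappa\ge 3$ it is the coexistence of edge-blocks and large blocks; and for the $X$-families with a nontrivial replacement graph one uses long induced paths running through several blocks. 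I would handle this uniformly by noting that a graph which is $k$-CS-transitive for all $k$ must in particular be $2$-distance-transitive after one checks (as in the proof of Theorem~\ref{mainTm}) that distances are then homogeneous, and then invoke Theorem~\ref{mainTm} directly — but the cleanest route, given that Theorem~\ref{mainTm2} is already available, is the finite case-check sketched above, each case being a short explicit argument.
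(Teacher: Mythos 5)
Your overall strategy---intersect the lists of Theorem~\ref{mainTm2} (resp.\ Corollary~\ref{mainTm2Cor1}) over all $k\ge 3$, show that only $X_{\kappa,\lambda}(K^1)=X_{\kappa,\lambda}$ survives, and close the cycle of implications via Theorem~\ref{mainTm}---is the intended one. But your elimination step runs in the wrong direction, and this is a genuine error rather than a slip of wording. The parameter constraints in Theorem~\ref{mainTm2} ($n\le\frac{k}{2}$, $m\le\frac{k}{3}$, $2m+n\le k+1$, $n<k$, $n\le\frac{k-|E|}{2}+1$) are \emph{lower} bounds on $k$: they become easier to satisfy as $k$ grows, so a fixed graph with nontrivial building blocks belongs to the list for all sufficiently \emph{large} $k$ and drops out only for \emph{small} $k$. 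Your claim that such a graph ``satisfies the constraint only for $k$ up to some finite threshold'' is backwards, and your specific assertion that $X_{2,\lambda}(K^n)$ fails to be $(2n+1)$-CS-transitive contradicts Theorem~\ref{mainTm2} itself, since $n\le\frac{2n+1}{2}$ places that graph in family (2) for $k=2n+1$. Likewise a fixed $E$ (say $C_5$) lies in $\EF_{k,m,n}$ with admissible $m,n$ for all large $k$, so it is not ``excluded outright for large $k$''.

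The correct elimination uses small $k$ together with the parity conditions: $Y_\kappa$ with $\kappa\ge 3$ is absent from the list for every even $k$, every $Z$-graph is absent for every odd $k$, and for $k=3$ (and $k=4,5$ where needed) the bounds $n\le\frac{k}{2}$, $m\le\frac{k}{3}$ and $n\le\frac{k-|E|}{2}+1$ force $H=K^1$ and $\EF_{k,m,n}=\es$ in families (2)--(4). With that repair your argument goes through. Two smaller points. First, ``distance-transitive $\Rightarrow$ CS-homogeneous'' is not obtained by unwinding definitions: distance-transitivity concerns pairs of vertices only and does not let you realise a prescribed isomorphism on larger subgraphs; you need Theorem~\ref{mainTm} to get $G\isom X_{\kappa,\lambda}$ first, then Corollary~\ref{mainTm2Cor1}(1) for $k\ge 3$ and the trivial checks for $k=1,2$. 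Second, your fallback route via $2$-distance-transitivity has an ordered-pair subtlety: $3$-CS-transitivity only produces an automorphism mapping one induced path of length~$2$ onto another as a set, not with a prescribed assignment of endvertices, although an unordered version of $2$-distance-transitivity does suffice for the proof of Theorem~\ref{mainTm}.
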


Let us say a word about the techniques we use for our proofs.
The proofs of the corresponding theorems for locally-finite graphs are all based on Dunwoody's \emph{structure trees} corresponding to finite edge-cuts that are invariant under the action of the automorphism group of the graph.
This structure tree theory is described in the book of Dicks and Dunwoody \cite{DD}; see M\"oller \cite{Mo3,Mo2} and Thomassen and Woess \cite{TW} for introductions.
Since those edge-cuts must be finite, these structure trees can in general only be applied to locally finite graphs.

Recently Dunwoody and Kr\"on \cite{DK} developed a similar structure tree theory based on vertex cuts, providing a similarly powerful tool for the investigation of graphs that are not locally finite.
We use this new theory in our proofs.

\section{The structure tree}\label{StructureTree}
Throughout this paper we use the terms and notation from \cite{D} if not stated otherwise.
In particular, a {\em ray} is a one-way infinite path. Two rays in a graph $G$ are {\em equivalent} if there is no finite vertex set $S$ in $G$ such that the two rays lie eventually in distinct components of $G-S$. The equivalence of rays is an equivalence relation whose classes are the {\em ends} of $G$.

Let $G$ be a connected graph and $A,B\sub V(G)$ two vertex sets. The pair $(A,B)$ is called a \emph{separation} (of $G$) if $A\cup B= V(G)$ and $E(G[A])\cup E(G[B])=E(G)$.
The \emph{order} of a separation $(A,B)$ is the order of its {\em separator} $A\cap B$ and the subgraphs $G[A\sm B]$ and $G[B\sm A]$ are the \emph{wings} of $(A,B)$.
With $(A,\sim)$ we refer to the separation $(A,(V(G)\sm A) \cup N(V(G)\sm A))$, and $(\sim,A)$ respectively.
A {\em cut} is a separation $(A,B)$ of finte order with non-empty wings such that the wing $G[A\sm B]$ is connected and such that no proper subset of~$A\cap B$ separates the wings of~$(A,B)$.
A {\em cut system} $\SF$ is a non-empty set of cuts $(A,B)$ of~$G$ satisfying the following properties.
\begin{enumerate}[1.]
\item If $(A,B)\in\SF$ then there is an $(X,Y)\in \SF$ with $X\sub B$.
\item Let $(A,B)\in\SF$ and $C$ be a component of $G[B\sm A]$. If there is a separation $(X,Y)\in\SF$ with $X\sm Y\sub C$, then the separation $(C\cup N(C),\sim)$ is also in $\SF$.
\item If $(A,B)\in\SF$ with wings $X,Y$ and $(A',B')\in\SF$ with wings $X',Y'$ then there are components $C$ in $X\cap X'$ and $D$ in $Y\cap Y'$ or components $C$ in $Y\cap X'$ and $D$ in $X\cap Y'$ that are wings for separations in $\SF$ each.
\end{enumerate}
An {\em $\SF$-separator} is a vertex set $S$ that is a separator of some separation in $\SF$.

Two separations $(A_0,A_1),(B_0,B_1)\in \SF$ are \emph{nested} if there are $i,j\in \set{0,1}$ such that one wing of $(A_i\cap B_j,\sim)$ does not contain any component $C$ with ${(C \cup N(C),\sim)\in\SF}$ and $A_{i'}\cap B_{j'}$ with $i\neq i'\in\set{0,1}$ and $j\neq j'\in\set{0,1}$ contains $(A_0\cap A_1)\cup(B_0\cap B_1)$.

A cut in a cut system $\SF$ is {\em minimal} if no other cut in $\SF$ has smaller order.
A {\em minimal cut system} is a cut system all whose cuts are minimal and thus have the same order.

In our proofs we use a certain kind of minimal cut systems that was introduced by Dunwoody and Kr\"on~\cite[Example 2.2]{DK}.

\begin{Exam}\label{Example}
Let $G$ be a connected infinite graph with at least two ends.
Let $n$ be the smallest order of a (finite) vertex set $X$ such that there are at least two components in $G-X$ that contain a ray each.
Let $\SF$ be the set of all cuts $(A,B)$ with order $n$ such that both $G[A]$ and $G[B]$ contain a ray.
Then $\SF$ is a cut system.
\end{Exam}

Let $\WF$ be the set of separators $A\cap B$ with $(A,B)\in\SF$.
An \emph{($\SF$-)block} is a maximal induced subgraph $X$ such that
\begin{enumerate}[(i)]
\item for every $(A,B)\in\SF$ there is $V(X)\sub A$ or $V(X)\sub B$ but not both;
\item there is some $(A,B)\in\SF$ with $V(X)\subseteq A$ and $A\cap B\subseteq V(X)$.
\end{enumerate}
Let $\BF$ be the set of blocks.
For a nested minimal cut system $\SF$ let $\TF$ be the graph with vertex set $\WF\cup\BF$ and edges $WB$ ($W\in\WF$ and $B\in\BF$) if and only if $W\sub B$.
Then $\TF=\TF(\SF)$ is called the \emph{structure tree} of~$G$ and $\SF$.

It is the same structure tree that is used by Dunwoody and Kr\"on \cite{DK} but we use a different notation for the underlying cut system.
In particular, we describe a cut as a separation $(A,B)$ while Dunwoody and Kr\"on refer to the (connected) vertex set $A\sm B$ as a cut. 
They substantiate the term `structure tree' in~\cite[Lemma 6.2]{DK}:

\begin{Lem}
Let $G$ be a connected graph, and let $\SF$ be a nested minimal cut system.
Then the structure tree of $G$ and $\SF$ is a tree.\qed
\end{Lem}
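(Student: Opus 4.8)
The goal is to show that the structure tree $\TF = \TF(\SF)$, whose vertices are the $\SF$-separators in $\WF$ together with the $\SF$-blocks in $\BF$ and whose edges join $W\in\WF$ to $B\in\BF$ precisely when $W\sub B$, is a tree. The plan is to verify the two defining properties of a tree separately: that $\TF$ is connected, and that $\TF$ is acyclic.

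\textbf{Connectedness.} First I would show that every block and every separator lies in the same component of $\TF$. Since $\SF$ is a cut system, property~1 lets us pass from any cut to a ``smaller'' one with separator nested inside the relevant wing, and iterating this — using the minimality of $\SF$ so that orders cannot decrease indefinitely — produces, for any two blocks, a finite chain of cuts whose separators interpolate between them. Concretely, given blocks $B$ and $B'$, I would choose a separation in $\SF$ with one wing containing $B\sm B'$-type data and another containing $B'\sm B$-type data (this is where property~3 of a cut system is used, to guarantee the requisite components exist as wings of separations in $\SF$), and then walk along the finitely many $\SF$-separators that lie between them, at each step checking that consecutive separators share a block. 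The key point is that because $\SF$ is \emph{nested} and \emph{minimal}, the separators are linearly preordered ``between'' $B$ and $B'$, and each block in the chain is exactly the maximal induced subgraph squeezed between two consecutive separators; this is essentially the content of the block definition~(i)--(ii). Hence any two vertices of $\TF$ are joined by a walk, so $\TF$ is connected.

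\textbf{Acyclicity.} The heart of the argument is to rule out cycles. Since $\TF$ is bipartite between $\WF$ and $\BF$, a shortest cycle would have the form $W_0 B_0 W_1 B_1 \dots W_{r-1} B_{r-1} W_0$ with all $W_i$ distinct, all $B_i$ distinct, $W_i, W_{i+1}\sub B_i$, and $r\ge 2$. I would derive a contradiction from nestedness: pick separations $(A_i,B_i^{\,\prime})\in\SF$ realizing the separators $W_i$; since $\SF$ is nested, any two of these separations are nested, which forces the wings to be linearly ordered by inclusion in a way incompatible with the cyclic pattern. More precisely, the block $B_i$ lies on a definite ``side'' of each separator $W_j$ (by block property~(i), $V(B_i)$ is in exactly one of the two sides of every separation in $\SF$), so the cyclic sequence of blocks and separators would force $W_0$ to lie simultaneously on two sides of some separator, or force two of the $W_i$ to coincide — contradicting that the cycle is reduced. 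Formally this is the standard argument that a set of pairwise nested separations of a connected graph gives rise to a tree structure; one shows the separators, ordered by the ``lies between'' relation, cannot close up.

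\textbf{Main obstacle.} The delicate part is making the ``lies between'' relation on $\SF$-separators precise and proving it is a genuine tree-order, i.e.\ transitive and such that the set of separators between two given ones is finite and linearly ordered — all from the somewhat technical definition of nestedness given via components $C$ with $(C\cup N(C),\sim)\in\SF$. Handling the separators $W_i$ as vertex sets (rather than the finite-edge-cut setting of classical Dunwoody theory) means I must be careful that minimality of the cut system genuinely prevents infinite descending chains and that property~2 of a cut system supplies the ``induced'' cuts $(C\cup N(C),\sim)$ needed whenever I restrict attention to a component of a wing. Once that bookkeeping is in place, connectedness and acyclicity both follow; but since this is exactly \cite[Lemma 6.2]{DK}, I would in practice cite their proof and only sketch the adaptation to the present notation, in which a cut is recorded as a separation $(A,B)$ rather than as the vertex set $A\sm B$.
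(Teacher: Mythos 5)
The paper gives no proof of this lemma at all: it is quoted verbatim from Dunwoody and Kr\"on \cite[Lemma~6.2]{DK} and stamped with a \verb|\qed|, so the ``paper's proof'' is simply that citation. Your proposal --- an outline of the standard connectedness-plus-acyclicity argument for nested separation systems, ending with an explicit deferral to the proof in \cite{DK} --- is consistent with and takes essentially the same route as the paper, namely relying on the Dunwoody--Kr\"on result.
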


Two vertices, vertex sets or subgraphs are {\em separated properly} by a separation (or its separator) if they lie in distinct wings of the separation and if each component in which they lie is adjacent to all vertices of the separator.

We need a fundamental property of cut systems that is shown in~\cite[Theorem~7.2]{DK} by Dunwoody and Kr\"on.
Since we do not use the whole theorem, we only state the part that is applied in this paper.

\begin{Tm}\label{VertexCuts}
Let $G$ be a connected graph with cut system $\CF$.
There is a nested cut system $\SF\sub \CF$ consisting only of minimal cuts that is invariant under $\Aut(G)$ with the following properties.
If two rays are separated by a minimal cut in~$\CF$, then they are separated by a cut in~$\SF$.
If two $\CF$-blocks are separated by a minimal cut in~$\CF$, then they are separated by a cut in~$\SF$.
Additionally, each $\CF$-block belongs to a unique vertex of the structure tree of~$G$ and $\SF$.
\qed
\end{Tm}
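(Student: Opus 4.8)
The plan is to follow the structure-tree strategy pioneered by Dunwoody for edge cuts, adapted to the vertex-cut setting of the preceding section. First I would pass from $\CF$ to the subfamily $\CF_n$ of all cuts of minimal order $n$ (the smallest order occurring in $\CF$); since any cut of $\CF$ that separates two given rays, or two given $\CF$-blocks, can be shrunk to a minimal cut doing the same job, it suffices to produce the desired $\SF$ inside $\CF_n$ and then check the three ``moreover'' claims there. The core of the argument is an \emph{uncrossing} procedure: given two minimal cuts $(A,B),(A',B')\in\CF_n$ that fail to be nested in the sense defined above, I would show that one of the four ``corner'' regions yields a cut of order at most $n$, hence of order exactly $n$ by minimality, which is nested with both original cuts and still lies in $\CF$.

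The key technical lemma is a submodularity estimate for neighbourhoods: for vertex sets $X,Y$, after restricting to the separators actually used, one has $|N(X\cap Y)|+|N(X\cup Y)|\le |N(X)|+|N(Y)|$, since a vertex counted on the left contributes at least as often on the right. Applied to the wings of two crossing minimal cuts, this forces two opposite corners to have separator of order $\le n$, hence $=n$; after passing to the \emph{reduced} cut (deleting from the separator any vertex not needed to separate the wings, as required in the definition of a cut) and invoking cut-system axiom~2 (respectively axiom~3), one verifies that these corner cuts are again members of $\CF$, are themselves minimal, and are nested with the two cuts we started from. Thus a crossing pair can always be replaced by nested cuts without raising the order and without destroying any separation that was present before.

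To turn repeated uncrossing into a single $\Aut(G)$-invariant nested family I would establish the \emph{finite interval} property: for any two minimal cuts $c,d\in\CF_n$ the set of minimal cuts lying ``between'' them --- those $e$ with one wing containing a wing of $c$ and the other wing containing a wing of $d$ --- is finite. This is where minimality and connectedness of the wings really enter: along a properly increasing chain of such cuts the wings shrink strictly in a well-founded way while the separators stay confined to a bounded region, so the chain has bounded length. Given finite intervals, the uncrossing process carried out simultaneously over all of $\CF_n$ is forced to stabilise, and since it refers only to $G$, $\CF$ and canonical operations on cuts it is automatically $\Aut(G)$-equivariant; the resulting $\SF$ is nested, minimal and invariant. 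Because no step of the uncrossing loses a separation, any two rays (or two $\CF$-blocks) separated by a minimal cut of $\CF$ are still separated by some cut of $\SF$. Finally, the last assertion --- that each $\CF$-block lies in a unique vertex of $\TF(\SF)$ --- follows from the definition of an $\SF$-block together with the already quoted lemma that $\TF$ is a tree: a $\CF$-block cannot be split properly by any cut of $\SF\sub\CF_n$, hence is contained in a single $\SF$-block, which is one vertex of $\TF$.

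The main obstacle I expect is precisely this termination/finiteness step in the vertex setting. For edge cuts, Dunwoody's inequality together with an additive size function makes interval finiteness almost immediate; for vertex separators the corners can degenerate, separators can overlap and must be reduced, and one has to verify both that the reduced corner cut lies in $\CF$ (not merely that it is a cut of $G$) and that the well-founded rank used for termination is genuinely respected. Once the uncrossing is set up correctly and shown to be equivariant, the remaining points --- that $\CF_n$ and its uncrossed version inherit the cut-system axioms, and that the three stated consequences hold --- are comparatively routine bookkeeping on top of the structure-tree lemma.
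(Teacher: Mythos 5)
You should first note that the paper contains no proof of this statement to compare against: it is imported verbatim (in weakened form) from Dunwoody and Kr\"on \cite[Theorem~7.2]{DK} and marked as a quoted black box, the only piece of that machinery restated separately being Lemma~\ref{FewCuts}. So your sketch has to be judged on its own terms against the Dunwoody--Kr\"on argument it is implicitly reconstructing. On those terms you have the right ingredients --- restriction to cuts of minimal order, uncrossing via a submodularity inequality for vertex boundaries, membership of the corner cuts in $\CF$ via axioms 2 and 3, and a finiteness statement to force nestedness --- but the step you yourself flag as the obstacle is genuinely missing rather than merely deferred.

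Concretely, ``simultaneous uncrossing over all of $\CF_n$ is forced to stabilise'' is not an argument: replacing a crossing pair by corner cuts can create new crossings with third cuts, and with infinitely many cuts there is no evident well-founded measure on the whole family. The way this is actually closed (following Dunwoody's edge-cut template) is not iterative: one uses the fact that only finitely many order-$n$ separators properly separate a fixed pair of vertices --- precisely the paper's Lemma~\ref{FewCuts} --- to show that each minimal cut crosses only finitely many others, and then takes $\SF$ to be the set of minimal cuts crossing the \emph{minimum} number of other cuts; this extremal set is shown to be nonempty, nested, $\Aut(G)$-invariant (the crossing number is canonical), and to retain every separation of rays and of $\CF$-blocks. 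Your substitute justification --- that wings shrink strictly in a well-founded way while separators stay in a bounded region --- fails in graphs that are not locally finite: a wing can properly contain an infinite strictly decreasing chain of wings of cuts of the same order, and the region between two finite separators need not be finite. The word ``properly'' in Lemma~\ref{FewCuts} is doing essential work that your version omits. The remaining assertions (that no separation of rays or blocks is lost, and that each $\CF$-block sits in a unique vertex of the structure tree) also need the explicit corner analysis rather than the one-line appeals you give, but those are fixable bookkeeping; the termination mechanism is the real gap.
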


The structure tree $\TF$ of a connected graph $G$ and a nested minimal cut system $\SF$ is \emph{basic} if $\SF$ is an $\Aut(G)$-invariant cut system such that all separators $A\cap B$ with $(A,B)\in \SF$ belong to the same $\Aut(G)$-orbit and both wings of each separation of $\SF$ contain a ray. Furthermore we require that any cut of order less than $|A\cap B|$ does not separate two ends. 
With Example~\ref{Example} we may state a useful corollary of Theorem~\ref{VertexCuts}.

\begin{Cor}\label{basicTreeExists}
For any graph $G$ with at least two ends there is a minimal cut system $\SF$ such that the structure tree of $G$ and $\SF$ is basic.\qed
\end{Cor}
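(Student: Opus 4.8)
The plan is to obtain the desired cut system by feeding the one from Example~\ref{Example} into Theorem~\ref{VertexCuts} and then trimming it to a single orbit of separators. Concretely, let $n$ and $\CF$ be as in Example~\ref{Example}: $\CF$ is a cut system all of whose members have order exactly $n$, and $n$ is least with the property that some vertex set of size $n$ separates two ends of~$G$. Applying Theorem~\ref{VertexCuts} to $\CF$ produces a nested, $\Aut(G)$-invariant cut system $\SF_0\sub\CF$ consisting only of minimal cuts (hence all of order $n$), with the extra feature that any two ends separated by a minimal cut of $\CF$ are already separated by a cut of~$\SF_0$. By the preceding lemma, the structure tree of $G$ and $\SF_0$ is a tree.

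I would then check the defining conditions of ``basic'' for a suitable subsystem of $\SF_0$. Invariance under $\Aut(G)$ is immediate from Theorem~\ref{VertexCuts}. For any $(A,B)\in\SF_0\sub\CF$, both $G[A]$ and $G[B]$ contain a ray, and deleting the finite separator $A\cap B$ from such a ray leaves a subray in the corresponding wing, so both wings of $(A,B)$ contain a ray. Moreover no cut of order smaller than $n$ separates two ends, since such a cut would yield a vertex set of size $<n$ separating two rays from distinct ends, contradicting the minimality of~$n$. The remaining requirement is that all separators lie in one $\Aut(G)$-orbit: here I would fix a separator $S$ of some cut of $\SF_0$ and pass to $\SF:=\set{(A,B)\in\SF_0 : A\cap B\in S^{\Aut(G)}}$. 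This $\SF$ is still $\Aut(G)$-invariant and, being a subset of $\SF_0$, still nested, still made of minimal order-$n$ cuts, and still satisfies the three conditions just verified; one must then re-check that $\SF$ is a cut system. Axiom~3 is automatic because any two cuts of $\SF$ are nested, so in the relevant corner one may take the connected wings (or their components) themselves; axioms~1 and~2 are where the ray-separation conclusion of Theorem~\ref{VertexCuts} is used, together with the $\Aut(G)$-action, to produce a cut of $\SF$ sitting further out inside each wing. Once $\SF$ is known to be a nested minimal cut system its structure tree is a tree, and by construction it is basic.

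The step I expect to be the main obstacle is exactly the last one: verifying that restricting $\SF_0$ to a single orbit of separators leaves a genuine cut system, in particular that inside every wing of a cut of $\SF$ there is still a cut of $\SF$ further out (axiom~1), rather than only cuts whose separators lie in other orbits. The point that should make this work is that every cut in $\CF$, and hence every cut under consideration, has the same, minimal, order $n$: a cut of this order cannot be ``refined'' into one of smaller order, so the separators that can appear are tightly constrained, and the ray-separation property of Theorem~\ref{VertexCuts} can be leveraged to keep the orbit of $S$ present arbitrarily far out along each wing.
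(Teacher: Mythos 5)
Your route is exactly the paper's: the corollary is stated without proof precisely because it is the combination of Example~\ref{Example} with Theorem~\ref{VertexCuts}, and your verification of the ray condition, the $\Aut(G)$-invariance and the minimality condition is the intended one. The orbit-restriction step you flag as the main obstacle does go through, and more easily than you fear: axiom~1 only asks for a cut $(X,Y)$ with $X\sub B$ (not one ``further out''), which is supplied by $(D\cup N(D),\sim)$ for a ray-containing component $D$ of $G[B\sm A]$, and minimality of the common order $n$ forces the separator of any such cut arising inside a wing to be all of $A\cap B$, hence to lie in the same orbit as $A\cap B$; the same observation settles axiom~2 for the restricted family.
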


The following lemma is due to Dunwoody and Kr\"on \cite[Lemma 4.1]{DK}. We state it here as it nicely shortens some proofs.

\begin{Lem}\label{FewCuts}
For any $k$, every pair of vertices in a connected graph is separated properly by only finitely many distinct separators of order $k$.\qed
\end{Lem}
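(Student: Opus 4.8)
The plan is to prove the statement by induction on $k$. Fix two vertices $u,v$ in a connected graph $G$ and let $\SF$ be the set of all separators of order $k$ that separate $u$ and $v$ properly; we show $\SF$ is finite. For $k=0$ this is vacuous (the empty set does not separate $G$). For $k=1$, if $\set{s}$ separates $u$ from $v$ then $s$ lies on every $u$--$v$ path, hence among the finitely many interior vertices of one fixed such path, and $\SF$ is finite. So assume $k\ge2$ and that the assertion holds for all smaller orders, in every connected graph and for every pair of vertices.

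Fix a $u$--$v$ path $P=p_0\ldots p_m$. Since any $S\in\SF$ separates $u=p_0$ from $v=p_m$, it meets $\set{p_1,\dots,p_{m-1}}$; let $r(S)$ be the least index $i\ge1$ with $p_i\in S$. There are only finitely many possible values of $r(S)$, so it suffices to show that for each fixed $r$ the family $\SF_r$ of those $S\in\SF$ with $r(S)=r$ is finite. Put $x:=p_r$. Every $S\in\SF_r$ contains $x$ and is disjoint from $\set{p_0,\dots,p_{r-1}}$, so $p_0,\dots,p_{r-1}$ all lie in the component $C_S$ of $G-S$ containing $u$; write $D_S$ for the component of $G-S$ containing $v$.

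The key point is that, provided $\SF_r\ne\es$, the vertex $x$ cannot be a cut vertex separating $u$ from $v$. Indeed, take any $S\in\SF_r$ and any $s\in S\sm\set{x}$ (possible since $k\ge2$). By properness $s$ has a neighbour in $C_S$ and a neighbour in $D_S$; as $C_S\sub G-S\sub G-x$ is connected and contains $u$, it lies in the component of $u$ in $G-x$, and likewise $D_S$ lies in the component of $v$ in $G-x$, so $s\ne x$ would lie in both of these components of $G-x$ — absurd if they are distinct. Hence $u$ and $v$ lie in a common component $C$ of $G-x$, which is a connected graph. Now for every $S\in\SF_r$ the set $S\sm\set{x}$ is a separator of order $k-1$ of $C$ separating $u$ and $v$ properly: each of its vertices has a neighbour in $C_S\sub C$, so $S\sm\set{x}\sub V(C)$; since $x\notin V(C)$, the graph $C-(S\sm\set{x})$ equals $C-S$; its components containing $u$ and $v$ are exactly $C_S$ and $D_S$, which are distinct; and both $C_S$ and $D_S$ are adjacent to all of $S\sm\set{x}$ by properness of $S$ in $G$. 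By the induction hypothesis applied to $C$, $u$, $v$ and $k-1$, there are only finitely many such sets $S\sm\set{x}$, and since $S\mapsto S\sm\set{x}$ is injective on $\SF_r$, the family $\SF_r$ is finite. Summing over the finitely many values of $r$ shows $\SF$ is finite.

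The main obstacle is exactly the case in which the vertex $x$ chosen on $P$ is a cut vertex of $G$: there one cannot naively delete $x$ and pass to a smaller separator, and it is precisely the properness hypothesis that saves the argument, by forcing every separator vertex to be adjacent to both sides and thereby making this case impossible for $k\ge2$. The same hypothesis is what survives the reduction from $S$ in $G$ to $S\sm\set{x}$ in $C$, so the induction goes through; the rest is routine bookkeeping comparing components of $G-S$ with components of $G-x$.
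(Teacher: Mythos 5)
Your argument is correct. Note first that the paper itself gives no proof of this lemma: it is quoted verbatim from Dunwoody and Kr\"on (their Lemma~4.1) and stamped with \verb|\qed|, so there is nothing internal to compare against; what you have supplied is a self-contained, elementary replacement for that citation. The induction on $k$ goes through: the least index $r(S)$ along a fixed $u$--$v$ path pins a single vertex $x=p_r$ inside every $S\in\SF_r$, the properness hypothesis (every vertex of $S$ has neighbours in both the $u$-side and the $v$-side component) is exactly what shows that $x$ cannot separate $u$ from $v$ once $k\ge 2$, and it is inherited by $S\sm\set{x}$ as a proper separator of $u,v$ in the component $C$ of $G-x$, so the injective map $S\mapsto S\sm\set{x}$ lands in a finite set by the induction hypothesis. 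The only points worth a half-sentence each are the degenerate configurations: if $u=v$, or if $u$ and $v$ are adjacent (so the chosen path may have no interior vertices), then no set at all separates them properly and $\SF=\es$, so the claim is vacuous there; and one should note explicitly that $S$ never contains $u$ or $v$, which is why $r(S)$ ranges over interior indices only. Neither affects the validity of the proof.
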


\section{Distance-transitive graphs}

Let us prove Theorem~\ref{mainTm}. Since the graphs $X_{\kappa,\lambda}$ are distance transitive, it suffices to prove that every connected $2$-distance transitive graph with at least two ends is some $X_{\kappa,\lambda}$ (with $\kappa,\lambda\ge 2$).

\begin{proof}[Proof of Theorem \ref{mainTm}]
Let $G$ be a connected $2$-distance-transitive graph with more than one end.
Let $\SF$ be a minimal cut system of $G$ such that the structure tree of $G$ and $\SF$ is basic.
Then $\SF$ is a nested cut system---in particular for every separation $(A,B)\in\SF$ and every automorphism $\alpha$ of~$G$, the cuts $(A,B),(A^\alpha,B^\alpha)$ are nested---and both wings of any  cut in $\SF$ contain a ray. 

\begin{Claim}\label{BlockComplete}
All $\SF$-blocks are complete.
\end{Claim}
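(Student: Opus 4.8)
The plan is to show that any two vertices $u,v$ lying in a common $\SF$-block $X$ are adjacent. First I would fix such a block $X$ and two vertices $u,v\in V(X)$, and suppose for contradiction that $uv\notin E(G)$. By the definition of an $\SF$-block, there is some cut $(A,B)\in\SF$ with $V(X)\sub A$ and separator $A\cap B\sub V(X)$; since $\TF$ is basic, both wings of $(A,B)$ contain a ray, and the separator has the minimal order $n$ among all vertex sets separating two ends. The key structural fact I would extract is that a vertex $w$ inside a block but not on any $\SF$-separator sees a ray on ``one side'' only in a controlled way, so distances between vertices of $X$ are comparable to distances measured through the separators; more usefully, I would argue that inside $X$ the graph looks the same from every vertex because the cut system is $\Aut(G)$-invariant and the structure tree is basic.

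The main idea is a standard ``push the neighbour along'' argument exploiting $2$-distance-transitivity together with Lemma \ref{FewCuts}. Since $u,v\in V(X)$ are non-adjacent, $d(u,v)\ge 2$; pick a geodesic $u=x_0,x_1,\dots,x_d=v$. The step I expect to do the real work is: show that $x_1$ also lies in $V(X)$, or more precisely that some vertex at distance $1$ from $u$ and distance $d-1$ from $v$ lies in the same block, which would let me induct on $d(u,v)$ and reduce to the case $d(u,v)=2$. For the base case $d(u,v)=2$, I would use $2$-distance-transitivity: every pair at distance $2$ is equivalent under $\Aut(G)$ to $(u,v)$, so the number of common neighbours of two vertices at distance $2$ is a constant, and likewise the local structure around an edge is constant. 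I would then count, using Lemma \ref{FewCuts}, the separators of order $n$ separating $u$ from $v$ properly: because $u,v$ both lie in the block $X$ and the block is ``$\SF$-convex'' (no cut in $\SF$ splits $V(X)$), $u$ and $v$ are separated \emph{properly} by \emph{no} cut of $\SF$, whereas a carefully chosen pair of vertices at distance $2$ on a ray crossing a separator \emph{is} separated properly by such a cut — contradicting the homogeneity of distance-$2$ pairs granted by $2$-distance-transitivity.

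Concretely, the contradiction I aim for runs as follows. Choose a cut $(A,B)\in\SF$ and two vertices $a,b$ with $a\in A\sm B$, $b\in B\sm A$, each adjacent to all of $A\cap B$ (possible since the wings are connected and meet the separator, and one may pass to vertices close to the separator), so that $d(a,b)=2$ and $a,b$ are separated properly by $A\cap B$. By $2$-distance-transitivity there is $\alpha\in\Aut(G)$ with $u^\alpha=a$, $v^\alpha=b$ (after first reducing to $d(u,v)=2$). Then $u,v$ would be separated properly by the separator $(A\cap B)^{\alpha\inv}$, which lies in $\SF$ since $\SF$ is $\Aut(G)$-invariant. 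But $u,v\in V(X)$ with $V(X)\sub A'$ for whichever side $A'$ of $(A\cap B)^{\alpha\inv}$ contains $X$ (by property (i) in the definition of a block, no separation of $\SF$ splits $V(X)$), so $u$ and $v$ cannot lie in distinct wings — contradiction. Hence $uv\in E(G)$, and since $u,v\in V(X)$ were arbitrary, $X$ is complete.

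The hard part will be the reduction to $d(u,v)=2$: I must ensure that a geodesic between two vertices of a block stays (up to the first and last step) inside the block, i.e.\ that $\SF$-blocks are geodesically convex, or else find a direct argument that avoids induction. I expect this to follow from the fact that any cut in $\SF$ separating an internal vertex of the geodesic from the rest would, by minimality of $n$ and the basic-ness of $\TF$, force a shortcut; but making this rigorous — rather than hand-waved — is where most of the care goes. An alternative, possibly cleaner route is to bypass convexity entirely: take $u,v\in V(X)$ at distance $2$ \emph{from the outset} by first proving that every $\SF$-block has diameter at most $2$ (again via Lemma \ref{FewCuts} and the invariance of $\SF$), and only then upgrade diameter $\le 2$ to completeness by the properness argument above. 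I would present whichever of these two is shorter once the details are in hand.
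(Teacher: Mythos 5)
Your endgame is exactly the paper's: exhibit a pair $a,b$ at distance $2$ with $a\in A\sm B$, $b\in B\sm A$ for some $(A,B)\in\SF$, exhibit a pair at distance $2$ inside a block, map one onto the other by $2$-distance-transitivity, and contradict property (i) of blocks (no $(A,B)\in\SF$ has a block meeting both wings). That part is sound, and you do not even need proper separation or Lemma~\ref{FewCuts} for it: any $a\in A\sm B$ and $b\in B\sm A$ with a common neighbour in $A\cap B$ are automatically at distance $2$, since a separation admits no edge between its wings.

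The genuine gap is the one you flag yourself: producing a distance-$2$ pair inside a block from a non-adjacent pair $u,v$ in a block. Neither of your two routes is carried out, and neither is easy to carry out. Geodesic convexity of $\SF$-blocks is not available a priori (a geodesic between two vertices of a block may well leave the block, since at this stage you know nothing about distances inside blocks), so the inductive step ``$x_1\in V(X)$'' has no justification; and ``every block has diameter at most $2$'' is essentially as strong as the claim you are trying to prove. The paper's way around this is the idea you are missing: the distance-$2$ pair need not live in the \emph{original} block $X$. Take non-adjacent $x,y\in X$, a geodesic $P$ between them, and the minimal subtree $\TF'$ of the structure tree containing $X$ whose vertices cover $P$; all leaves of $\TF'$ are blocks by minimality. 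If $\TF'=\set{X}$, then $P\sub X$ and two vertices at distance $2$ along $P$ do the job. Otherwise a leaf block $Y\neq X$ is entered and left by $P$ through its unique adjacent separator, so $Y$ contains three consecutive vertices of $P$, and the two outer ones are at distance exactly $2$ in $G$ because $P$ is a geodesic. Applying your contradiction to $Y$ instead of $X$ finishes the proof without any convexity statement. Without this (or an equivalent) device, your argument only establishes the claim for blocks that happen to contain two vertices at distance $2$ in $G$.
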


Let $(A,B)\in \SF$, and let $a\in A\sm B$ and $b\in B\sm A$ be vertices with distance $2$ in~$G$.
Suppose that some $\SF$-block $X$ is not complete. Let $x,y$ be two non-adjacent vertices in $X$, and let $P$ be a shortest $x$--$y$ path in $G$.
Let $\TF'$ be \emph{the} minimal subtree of~$\TF$ containing $X$ such that its vertices cover $P$. All leaves of $\TF'$ are \SF-blocks.
Let $Y$ be a leaf of $\TF'$ different from $X$, if available, and $Y=X$ otherwise.
Then there are two vertices $a',b'\in Y$ with distance $2$ in~$G$.
As $G$ is $2$-distance-transitive there is an automorphism $\alpha$ of $G$ with $a^\alpha=a'$ and $b^\alpha=b'$.
Then $Y$ meets both wings of $(A^\alpha,B^\alpha)$---namely $a^\alpha, b^\alpha\in Y$---which contradicts the fact that $Y$ is an $\SF$-block.\qed

\begin{Claim}\label{disjointorequal}
Any two \SF-separators are equal or disjoint.
\end{Claim}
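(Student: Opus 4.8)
The plan is to argue by contradiction: suppose two $\SF$-separators $S = A\cap B$ and $S' = A'\cap B'$ (with $(A,B),(A',B')\in\SF$) are neither equal nor disjoint, and derive a violation of $2$-distance-transitivity via the basicness of the structure tree. The key structural input is that $\SF$ is nested, so $(A,B)$ and $(A',B')$ are nested; combined with the fact that both wings of each cut contain a ray and all separators lie in one $\Aut(G)$-orbit, a proper overlap $S\cap S'\neq\es$, $S\not\subseteq S'$, $S'\not\subseteq S$ should already be incompatible with nestedness in most configurations, and I would first dispose of those cases directly from the definition of nested separations. The remaining, genuinely problematic case is $S\subsetneq S'$ (one separator strictly contained in another), which is \emph{not} immediately ruled out by nestedness, and this is where the main work lies.

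So assume $S\subsetneq S'$. First I would record a metric observation: since $(A,B)$ is a cut, no proper subset of $S$ separates its wings, and by Claim~\ref{BlockComplete} all $\SF$-blocks are complete; I want to pin down the distance between suitable vertices of $S'\setminus S$ and vertices on the far side of $S$. Pick $s\in S'\setminus S$ and a vertex $w$ in one wing of $(A,B)$ not incident to $s$'s neighbourhood in a controlled way; the aim is to produce a pair of vertices at distance exactly $2$ that ``sees'' both $S$ and $S'$. Then, using $2$-distance-transitivity, map such a pair to an analogous pair built from the separator $S$ alone. Because all separators are in the same orbit and of the same (minimal) order $n = |S| < |S'|$, an automorphism realizing this would have to send a cut with separator $S'$ to something of order $n$, contradicting either the minimality of the cut system (all cuts in $\SF$ have the same order) or the requirement in the definition of \emph{basic} that no cut of order less than $|A\cap B|$ separates two ends — since both wings of $(A',B')$ contain a ray, a genuinely smaller separator doing the same job is forbidden.

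The cleanest route, which I would try to make precise, is this: from $S\subsetneq S'$ and Lemma~\ref{FewCuts} plus the orbit condition, there is an automorphism $\alpha$ with $S^\alpha\subseteq S'$ as well (iterate the overlap along the structure tree using nestedness), forcing an infinite strictly descending or ascending chain of separators inside $S'$, which is absurd since $|S'| = n$ is finite and all $\SF$-separators have exactly $n$ elements. In fact once we know $|S| = |S'| = n$ (both separators come from minimal cuts in the \emph{minimal} cut system $\SF$), the hypothesis $S\subsetneq S'$ is instantly contradictory: a proper subset of an $n$-set has fewer than $n$ elements. Thus the only surviving possibility is $S\cap S' \neq \es$ with neither containing the other, and here I would return to the definition of nested: writing the four ``quadrant'' sets $A_i\cap B_j$, the overlap of the separators would force a component in one quadrant that is a wing of a separation in $\SF$ on the ``wrong'' side of the nestedness condition, contradicting that $(A,B)$ and $(A',B')$ are nested. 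The main obstacle I anticipate is the bookkeeping in this last case — carefully tracking which quadrant contains which wing and invoking the ``no component $C$ with $(C\cup N(C),\sim)\in\SF$'' clause correctly — rather than any deep new idea; the finiteness/minimality argument handles the containment case almost for free.
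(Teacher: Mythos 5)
Your case split puts the weight in the wrong place: the case you dismiss as ``bookkeeping'' is the one that carries all the mathematical content, and your argument for it does not work. The containment case $S\subsetneq S'$ is indeed vacuous (all $\SF$-separators lie in a single $\Aut(G)$-orbit and are finite, so they have equal cardinality), as you eventually note yourself. But the genuinely hard case is a \emph{proper overlap} --- $S\cap S'\neq\es$ with neither separator containing the other --- and your claim that this already contradicts nestedness is false. The definition of nested only demands that some corner $A_i\cap B_j$ contain no component that is a wing of a cut in $\SF$ while the opposite corner contains $S\cup S'$; nothing in that condition prevents the two separators from sharing some but not all of their vertices. The paper's own proof, after normalising exactly as you do (arranging $A\cap A^{\alpha}\subseteq B\cap B^{\alpha}$), treats the existence of a vertex $v\in A\cap A^{\alpha}=S\cap S'$ as a live possibility that must be refuted by a further argument; it is not read off from nestedness. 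Since ``nested minimal cut system'' alone does not imply disjoint-or-equal separators, any correct proof must use $2$-distance-transitivity decisively in the overlap case, and yours invokes it only vaguely (and only in the containment case, where it is not needed).

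The missing step is the paper's counting argument. Given $v\in S\cap S'$, one uses that the cut has minimal order and that all $\SF$-blocks are complete (Claim~\ref{BlockComplete}) to find $x\in A^{\alpha}\setminus S^{\alpha}$ and $y\in A\setminus S$ at distance $2$, and a vertex $x'\in S^{\alpha}\setminus S$; then $y$ and $x'$ are also at distance $2$ via $v$. By $2$-distance-transitivity some $\beta\in\Aut(G)$ sends $(x,y)$ to $(x',y)$. But $\beta$ permutes $\SF$ and hence preserves the number --- finite by Lemma~\ref{FewCuts} --- of $\SF$-separators properly separating a pair, whereas $x$ and $y$ are properly separated by strictly more $\SF$-separators than $x'$ and $y$: the separator $S'=S^{\alpha}$ separates the first pair but contains $x'$ and so cannot properly separate the second. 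That contradiction, not nestedness, is what eliminates the overlap case, and it is absent from your proposal.
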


Let $S,S'$ be any two distinct \SF-separators, $(A,B)\in\SF$, and $\alpha\in\Aut(G)$ such that $S= A\cap B$ and $S^\alpha= S'$.
As $(A,B)$ and $(A^\alpha,B^\alpha)$ are nested we may assume (by symmetry) that $A\cap A^\alpha \sub B\cap B^\alpha$, and that there is a vertex $v\in A\cap A^\alpha$---$S\cap S'=\emptyset$ otherwise. As $(A,B)$ is a cut of minimal order and all blocks are complete there are vertices $x\in A^\alpha\sm S^\alpha$ and $y\in A\sm S$ with distance two. There is a vertex $x'\in S^\alpha\sm S$, since $S\ne S^\alpha$ and both separators have the same (finite) order. As $yx'\notin E(G)$ and $x,y'$ are adjacent to $v$ at a time, they have distance $2$. Thus there is an automorphism $\beta$ of~$G$ with $x^\beta=x'$ and $y^\beta= y$. This is a contradiction according to Lemma \ref{FewCuts} as there are more cuts in $\SF$---all of the same size---separating $x$ from $y$ than $x'$ from $y$.\qed

\mbox{}

Let us show that all $\SF$-separators have order 1. Suppose not, then there are at least to vertices in some $\SF$-separator $S$ and, as all \SF-blocks are complete, there is an edge $e$ in $G[S]$. On the other hand, there is an edge $e'$ that has precisely one of its end vertices in $S$.
Since $G$ is $2$-distance-transitive it is $1$-distance-transitive and thus there is an automorphism of~$G$ that maps $e$ to $e'$. This is a contradiction, since the \SF-separators intersect trivially.

As $G$ is $1$-distance-transitive any two blocks have the same order and every vertex lies in the same number of blocks. The size of an \SF-block is at least $2$, since there are edges in $G$. Every \SF-separator lies in at least two different \SF-blocks, as there are at least two different ends in~$G$. Thus $G$ is isomorphic to $X_{\kappa,\lambda}$ for some cardinals $\kappa,\lambda\ge 2$.
\end{proof}

\begin{proof}[Proof of Corollary~\ref{CorOfMainTm}]
A 2-transitive graph with at least two ends is  also $2$-distance-transitive and hence an $X_{\kappa,\lambda}$ with $\kappa,\lambda\ge 2$. If $\kappa\geq 3$ there is a path of length $2$ in every block whose (adjacent) endvertices can be mapped onto vertices with distance $2$. This is a contradiction and hence $\kappa=2$. The graphs $X_{2,\lambda}$ with $\lambda\ge 2$ are precisely the $\lambda$-regular trees.
\end{proof}

\section{The local structure for some finite subgraphs}\label{EnoSec}

In some $k$-CS-transitive graphs the previously introduced finite homogeneous graphs play a role as building blocks.
Enomoto \cite{E} gave a combinatorially characterization of these homogeneous graphs.
We apply a corollary of his result \cite[Theorem 1]{E} in our proofs.

For a subgraph $X$ of a graph $G$ let $\Gamma(X)=\bigcap_{x\in V(X)} N(x)$, which is the set of all vertices in $G$ that are adjacent to all the vertices in $X$.
A graph $G$ is {\em combinatorially homogeneous} if $\abs{\Gamma(X)} = \abs{\Gamma(X')}$ for any two isomorphic subgraphs $X$ and $X'$.
Furthermore, a graph $G$ is {\em $l$-S-transitive} if for every two isomorphic subgraphs of order $l$ there is an automorphism of $G$ mapping one onto the other.

\begin{Tm}[\mbox{\cite[Theorem 1]{E}}]\label{Eno}
Let $G$ be a finite graph.
The following properties of~$G$ are equivalent.
\begin{enumerate}[(1)]
\item $G$ is homogeneous;
\item $G$ is combinatorially homogeneous;
\item $G$ is isomorphic to
\begin{enumerate}[(a)]
\item a disjoint union of isomorphic complete graphs,
\item a complete $t$-partite graph $K^t_r$ with $r$ vertices in each partition class and with $2\le t,r$,
\item $C_5$, or
\item $L(K_{3,3})$ (the line graph of $K_{3,3}$).\qed
\end{enumerate}
\end{enumerate}
\end{Tm}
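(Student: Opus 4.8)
The plan is to split the statement into two parts that can be treated independently: the equivalence of $(1)$ and $(2)$, which is a short counting fact valid for every finite graph, and the classification $(1)\Leftrightarrow(3)$, which is essentially Gardiner's theorem \cite{Ga1} and for which I would use a reduction to strongly regular graphs followed by an induction on $\abs{V(G)}$ via neighbourhood subgraphs.

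The implication $(1)\Rightarrow(2)$ is immediate: an automorphism of $G$ extending an isomorphism between induced subgraphs $X$ and $X'$ maps $\Gamma(X)$ bijectively onto $\Gamma(X')$. For $(2)\Rightarrow(1)$ I would extend a given isomorphism one vertex at a time; write $\Gamma(S):=\bigcap_{s\in S}N(s)$ for a vertex set $S$, so that the $\Gamma(X)$ of the statement is $\Gamma(V(X))$. It suffices to show: whenever $\varphi\colon V(X)\to V(X')$ is an isomorphism between two induced subgraphs of $G$ and $v\in V(G)\sm V(X)$, there is a vertex $v'\in V(G)\sm V(X')$ such that $\varphi$ together with $v\mapsto v'$ is again an isomorphism of induced subgraphs. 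Iterating this --- and using that $\abs{V(G)\sm V(X)}=\abs{V(G)\sm V(X')}$ is maintained at every step --- extends $\varphi$ to an automorphism of $G$. For $\es\neq S\sub V(X)$ let $n_S$ be the number of vertices $w\in V(G)\sm V(X)$ with $N(w)\cap V(X)=S$, and let $f(S)$ be the number of such $w$ with $S\sub N(w)$. Then $f(S)=\sum_{S\sub S'\sub V(X)}n_{S'}$, so by M\"obius inversion over the subset lattice of $V(X)$ the numbers $n_S$ are determined by the numbers $f(S)$. Moreover $f(S)=\abs{\Gamma(S)}-\abs{\Gamma(S)\cap V(X)}$; the first term equals $\abs{\Gamma(\varphi(S))}$ by combinatorial homogeneity (since $\varphi$ restricts to an isomorphism $G[S]\isom G[\varphi(S)]$), and the second term equals $\abs{\Gamma(\varphi(S))\cap V(X')}$ because $\Gamma(S)\cap V(X)$ depends only on $G[V(X)]$ and $S$. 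Hence $\varphi$ transports the numbers $f(S)$, and with them the numbers $n_S$, to the corresponding quantities of $X'$; applying this with $S:=N(v)\cap V(X)$ (the case $S=\es$ being trivial) gives $n_S\ge 1$, whence $n_{\varphi(S)}\ge 1$, which provides the required $v'$.

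For the classification, $(3)\Rightarrow(1)$ is a direct check graph by graph: the automorphism groups of a disjoint union of equal complete graphs and of a $K^t_r$ are the obvious wreath products and visibly act homogeneously, while $C_5$ and $L(K_{3,3})$ (the $3\times 3$ rook's graph) are small enough to verify by hand. For $(1)\Rightarrow(3)$, suppose $G$ is homogeneous; since homogeneity is invariant under complementation, we may also use properties of $\closure{G}$. As $\Aut(G)$ is transitive on vertices, on ordered edges and on ordered non-edges, $G$ is regular and any two adjacent, resp.\ non-adjacent, vertices have the same number of common neighbours; hence if $G$ and $\closure{G}$ are both connected, then $G$ is strongly regular. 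The other cases are disposed of directly: if $G$ is disconnected, then no two vertices lie at distance $2$ (otherwise homogeneity would carry a non-adjacent pair with a common neighbour to one in distinct components, which has none), so every component is complete, and vertex-transitivity forces equal orders, so $G$ is a disjoint union of equal complete graphs; dually, $\closure{G}$ disconnected gives $G\isom K^t_r$. So assume $G$ strongly regular with $G$ and $\closure{G}$ connected, and induct on $\abs{V(G)}$. The key point is that for every vertex $v$ the neighbourhood graph $G_v:=G[N(v)]$ is again homogeneous: an isomorphism between induced subgraphs of $G_v$, extended by fixing $v$, is an isomorphism between induced subgraphs of $G$, hence extends to an automorphism of $G$ fixing $v$ and therefore restricts to an automorphism of $G_v$ inducing the original map. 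By induction $G_v$ is in the list, and it is regular of degree $\lambda$. Running through the possibilities for $G_v$ --- a disjoint union of equal cliques, a complete multipartite graph with equal parts, $C_5$, or $L(K_{3,3})$ --- and using the standard feasibility conditions for strongly regular graphs (integrality of the eigenvalue multiplicities, the Krein and absolute bounds), one shows that $G$ itself is a disjoint union of equal cliques, a complete multipartite graph with equal parts, $C_5$, or $L(K_{3,3})$.

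I expect this last case analysis to be the main obstacle, since strong regularity is much weaker than homogeneity: many strongly regular graphs --- the Petersen graph, the Shrikhande graph, the triangular graphs $L(K_p)$, the lattice graphs $L(K_{p,p})$ and their complements for large $p$, and others --- are not homogeneous, and every possibility compatible with the prescribed local structure must be excluded, either by showing that the forced strongly regular parameters are infeasible (as happens, for example, for a graph that would be locally $C_5$) or by exhibiting two isomorphic induced subgraphs with $\abs{\Gamma(\cdot)}$ of different sizes, which by the equivalence $(1)\Leftrightarrow(2)$ just proved is precisely a failure of homogeneity. Alternatively, in a self-contained write-up one could simply invoke Gardiner's classification \cite{Ga1} for $(1)\Leftrightarrow(3)$ and present only the counting argument above, which is the genuinely new ingredient of Enomoto's theorem.
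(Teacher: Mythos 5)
The paper gives no proof of this statement: it is quoted from Enomoto \cite{E} with a q.e.d.\ mark inside the theorem environment, so the ``paper's own proof'' is just the citation, and citing \cite{E} (or Gardiner \cite{Ga1} for the classification) is all that is expected here. Judged on its own merits, the half of your proposal dealing with $(1)\Leftrightarrow(2)$ is correct and complete. The implication $(1)\Rightarrow(2)$ is immediate, and your M\"obius-inversion argument for the converse is sound: $f(S)=\sum_{S\sub S'\sub V(X)}n_{S'}$ determines the $n_S$; the identity $f(S)=\abs{\Gamma(S)}-\abs{\Gamma(S)\cap V(X)}$ shows that $\varphi$ transports every $f(S)$, the first term by combinatorial homogeneity applied to $G[S]\isom G[\varphi(S)]$ and the second because it depends only on the induced subgraph $G[V(X)]$; and $n_{N(v)\cap V(X)}=n_{\varphi(N(v)\cap V(X))}\ge 1$ then yields the one-vertex extension. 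The case $N(v)\cap V(X)=\es$ is indeed covered by the same inversion, since $f(\es)=\abs{V(G)}-\abs{V(X)}$ is preserved as well.

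The gap is in $(1)\Rightarrow(3)$. The reduction to the strongly regular case and the observation that $G[N(v)]$ is again homogeneous are fine, but the instruction to ``run through the possibilities for $G_v$'' and finish with eigenvalue, Krein and absolute bounds is the entire content of Gardiner's theorem and is not carried out; moreover, parameter feasibility alone cannot finish it. For every $p\ge 4$ the lattice graph $L(K_{p,p})$ is strongly regular, exists, and is locally a disjoint union of two cliques $K^{p-1}$---a local structure admitted by your induction hypothesis---yet is not homogeneous; it can only be excluded by exhibiting isomorphic induced subgraphs with different $\abs{\Gamma(\cdot)}$ (two disjoint edges lying in two distinct rows have no common neighbour, while two disjoint edges lying in one row and one column have exactly one), and an analogous explicit argument is needed for the triangular graphs and the other surviving families. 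So, as written, the classification half is a programme rather than a proof. Your own fallback---invoke \cite{Ga1} for $(1)\Leftrightarrow(3)$ and present only the counting argument---is the right way to write this up, and is in effect what the paper does by citing \cite{E}.
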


Whenever we need finite homogeneous graphs as building blocks for $k$-CS-transitive graphs we use Corollary \ref{CharEno} to handle them.

\begin{Cor}\label{CharEno}
Let $k\ge 3$, $m\leq k-2$, and $n\le \frac{k}{2}$ be integers.
Let $G$ be a finite graph with maximum degree $m$ that is neither complete nor the complement of a complete graph.
If $G$ is $l$-S-transitive for all $l\leq k-1$, any induced subgraph of $G$ on $n$ vertices is connected, and two non-adjacent vertices do not have $k-2$ common neighbours, then $G$ is (combinatorially) homogeneous and isomorphic to
\begin{enumerate}[(a)]
\item $t$ disjoint $K^r$ with $2\le t$, $1\le r-1\le m$, and $tr\le n-1$,
\item $K^t_r$ with $2\le t$, $2\le r\le n-1$, and $(t-1)r\le \min\{m, k-3\}$,
\item $C_5$ with $2\le m$ and $4\le n$, or
\item $L(K_{3,3})$ with $4\le m$ and $6\le n$.
\end{enumerate}
\end{Cor}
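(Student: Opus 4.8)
\textbf{Proof plan for Corollary~\ref{CharEno}.}
The plan is to feed the hypotheses into Enomoto's Theorem~\ref{Eno} and then extract the numerical side conditions case by case. First I would observe that $l$-S-transitivity for all $l\le k-1$, together with $k\ge 3$, in particular gives $2$-S-transitivity, so by the implication (1)$\Leftrightarrow$(2) of Theorem~\ref{Eno} it suffices to check that $G$ is combinatorially homogeneous; actually it is cleaner to argue directly that an $l$-S-transitive graph is combinatorially homogeneous for isomorphic subgraphs of order $\le l$, because an automorphism carrying $X$ to $X'$ carries $\Gamma(X)$ bijectively onto $\Gamma(X')$. Since $G$ is finite and $l$-S-transitive for \emph{all} $l\le k-1$, and since a graph that is neither complete nor edgeless on more than $k-1$ vertices would force us to look at subgraphs of unbounded size --- here I would note that homogeneity is really only needed for subgraphs up to a bounded order, and the hypothesis "neither complete nor the complement of a complete graph" plus the bounded-degree and bounded-independent-set conditions will cap $|V(G)|$ --- we get that $G$ is combinatorially homogeneous and hence, by Theorem~\ref{Eno}, isomorphic to one of (a) a disjoint union of isomorphic complete graphs $K^r$, (b) a complete $t$-partite graph $K^t_r$ with $t,r\ge 2$, (c) $C_5$, or (d) $L(K_{3,3})$; the options $K^t_r$ with $r=1$ (a complete graph) and the disjoint union with $r=1$ (an edgeless graph) are excluded by hypothesis.

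Next I would go through these four cases and translate the three remaining hypotheses --- maximum degree $\le m$, every $n$-vertex induced subgraph connected, no two non-adjacent vertices with $\ge k-2$ common neighbours --- into the stated inequalities.

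For case (a), $G=t\cdot K^r$ with $t\ge 2$: the maximum degree is $r-1$, so $r-1\le m$, and $r\ge 2$ (so $1\le r-1$); two vertices in distinct copies are non-adjacent with $0$ common neighbours, which is harmless since $k-2\ge 1$; and an induced subgraph is connected iff it lies inside one copy, so the largest disconnected induced subgraph has $\min\{t,2\}\cdot 1=2$ vertices unless... more carefully, taking one vertex from two different copies gives a disconnected induced subgraph on $2$ vertices, so we need $n\ge 3$; in fact to \emph{force} connectivity of all $n$-sets we need $n$ to exceed the size of the largest induced subgraph meeting two copies, i.e.\ $n> (t{-}1)\cdot 0 + \dots$ --- here I would carefully compute that a set of $tr-1$ vertices can still be disconnected (remove one vertex from one copy, keep one vertex in another), so we need $n\le$ something; the clean statement $tr\le n-1$ should drop out by checking that any $\le n-1$ vertices... wait, that is backwards: we want every $n$-set connected, the largest disconnected set has $tr-1$ vertices when $t=2$? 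No: with $t=2$, $r\ge 2$, take all of one copy plus all but one of the other: that is $2r-1$ vertices, disconnected. So disconnected sets exist up to $tr-1$ vertices, hence we need $n> tr-1$, i.e.\ $tr\le n-1$. For case (b), $G=K^t_r$: degree is $(t-1)r$, giving $(t-1)r\le m$; two non-adjacent vertices lie in a common class and have exactly $(t-1)r$ common neighbours, so $(t-1)r\le k-3$; and the largest disconnected induced subgraph is a single class, of size $r$, so we need $n> r$, i.e.\ $r\le n-1$ (and $r\ge 2$, $t\ge 2$). For (c), $C_5$: degree $2$, so $m\ge 2$; no non-adjacent pair has $\ge 2$ common neighbours (they have exactly one), automatically fine since $k-2\ge 1$; the largest disconnected induced subgraph of $C_5$ has $2$ vertices (two non-adjacent vertices), so $n\ge 3$ forces connectivity of $n$-sets --- but the statement claims $n\ge 4$, so I would recheck: in $C_5$ two nonadjacent vertices are independent, size $2$; is there a disconnected induced subgraph on $3$ vertices? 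Three vertices of $C_5$: up to symmetry either a path or ``two adjacent plus one isolated''; the latter (take $v_1v_2$ and $v_4$) is disconnected on $3$ vertices, so we need $n\ge 4$. For (d), $L(K_{3,3})$: it is $4$-regular on $9$ vertices, so $m\ge 4$; I would cite that it is strongly regular with $\lambda=\mu=1$, so no nonadjacent pair has $\ge 2$ common neighbours; and one computes the largest disconnected induced subgraph has $5$ vertices (or so), forcing $n\ge 6$ --- this small combinatorial check on $L(K_{3,3})$ I would do by hand or cite.

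\textbf{Expected main obstacle.} The genuinely delicate point is the first paragraph: arguing that the hypotheses actually let us invoke Theorem~\ref{Eno} at all, i.e.\ that "$l$-S-transitive for all $l\le k-1$" plus the three finiteness conditions is enough to conclude combinatorial homogeneity (Enomoto's condition (2) quantifies over \emph{all} isomorphic subgraphs, of every order, whereas we only control orders $\le k-1$). The resolution is that under the bounded-degree, bounded-independent-set, and bounded-common-neighbour hypotheses the order of $G$ is itself bounded in terms of $k$ --- indeed once $G$ is one of Enomoto's graphs the bound is explicit --- but to avoid circularity one should either invoke Enomoto's \emph{combinatorial} characterization directly (his Theorem~1 only needs the defining property checked for subgraphs up to a bounded size, or equivalently, a finite $l$-S-transitive graph with these degree bounds has all its subgraphs of bounded order) or quote the precise form of \cite[Theorem~1]{E} that already builds in such a local hypothesis. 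After that, everything reduces to the bookkeeping above: computing maximum degree, the size of the largest disconnected induced subgraph, and the maximum number of common neighbours of a non-adjacent pair for each of the four graph families, which is routine.
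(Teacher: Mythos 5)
Your proposal takes the same route as the paper: invoke Theorem~\ref{Eno} to reduce to the four families and then verify the numerical boundaries family by family — indeed the paper's own proof consists of exactly these two steps stated in two sentences, so your write-up is considerably more detailed than (but not strategically different from) the original, and the subtlety you flag about applying Enomoto's theorem when the hypotheses only control subgraphs of order at most $k-1$ is not addressed by the paper either. One small correction to your bookkeeping: $L(K_{3,3})$ is strongly regular with parameters $(9,4,1,2)$, so a non-adjacent pair has $\mu=2$ (not $1$) common neighbours — harmless here because $6\le n\le\frac{k}{2}$ forces $k\ge 12$ — and likewise in the $C_5$ case the condition on common neighbours is satisfied because $4\le n\le\frac{k}{2}$ gives $k\ge 8$, not because "$k-2\ge 1$" (for $k=3$ a non-adjacent pair of $C_5$ would have exactly $k-2=1$ common neighbours).
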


\begin{proof}
Theorem~\ref{Eno} provides that, ignoring the boundaries, there are no other cases as (a) to (d). The specific boundaries for each case can be checked easily. For example, in case (b) the `$k-3$' in the inequality $(t-1)r\le \min\{m, k-3\}$ ensures that $K^t_r$ does not contain two non-adjacent vertices with $k-2$ common neigbours if $m=k-2=(t-1)r$. 
\end{proof}

Let \emph{$\EF_{k,m,n}$} be the class of all those graphs that satisfy the assumptions of Corollary \ref{CharEno} with the values $k,m$ and $n$.

\section{The $k$-CS-transitivity for special graphs}\label{reverseDirection}

This section is dedicated to show that any graph $G$ from Theorem~\ref{mainTm2} is indeed $k$-CS-transitive for the specific values of~$k$.
The general idea behind the following proofs is that any connected induced subgraph of~$G$ on~$k$ vertices contains an anchor like part.

To clarify, let $G$ be a graph and let $X$ be a connected induced subgraph of $G$. A subgraph $A$ of $X$ is an \emph{anchor} of~$X$ in~$G$ if for every induced subgraph $Y$ in~$G$ isomorphic to $X$ there is some isomorphism $\gamma$ from $X$ to $Y$ such that the restricted map $\gamma|_A$ extends to an automorphism of $G$ that maps $X$ to $Y$.
\begin{Rem}
If every induced connected subgraph of order $k$ of some graph $G$ contains an anchor, then $G$ is $k$-CS-transitive.
\end{Rem}

The anchors we commonly use are either induced paths of length $3$ or  smallest separators.

The \emph{building blocks} of~$X_{\kappa,\lambda}(H)$ and $Z_{\kappa,\lambda}(H_1,H_2)$ are the isomorphic copies of~$H$, $H_1$, and $H_2$ that are used for the construction of these graphs. 

\begin{Lem}\label{rdcomplete}
Let $G$ and $k$ belong to one of the classes (\ref{mainTm2First})~to~(\ref{mainTm2Last}) of Theorem~\ref{mainTm2}.
If some connected induced subgraph $X$ of~$G$ on~$k$ vertices has diameter~$1$---i.\,e.\ $X$ is complete, then it itself is an anchor.
\end{Lem}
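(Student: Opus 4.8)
The plan is to show that a complete subgraph $X$ on $k$ vertices cannot spread out through the structure tree, so it must sit inside a single building block (or inside the union of two building blocks on a structure-tree edge), and that such a placement is essentially forced.

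First I would fix, for each of the classes (\ref{mainTm2First})--(\ref{mainTm2Last}), the natural basic structure tree $\TF$ coming from Corollary~\ref{basicTreeExists}, and identify its $\SF$-blocks and $\SF$-separators explicitly in terms of the building blocks: for $X_{\kappa,\lambda}(H)$ a block is a copy of $H$ together with the copies adjacent to it forming one $X_{\kappa,\lambda}$-block, and separators are single copies of $H$; for $Z_{\kappa,\lambda}(H_1,H_2)$ a block is the union of an $H_1$-copy and an adjacent $H_2$-copy, etc. The key geometric fact is that if $X$ is complete and meets two distinct $\SF$-blocks $P,Q$, then every vertex of $X$ in $P$ is adjacent to every vertex of $X$ in $Q$; since in all our graphs adjacency between distinct building blocks only occurs along a structure-tree edge, $P$ and $Q$ must be adjacent in $\TF$, and moreover $X$ cannot meet a third block (that would force a triangle in $\TF$, impossible since $\TF$ is a tree). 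So $V(X)$ is contained in the union of (the building blocks inside) at most two adjacent $\SF$-blocks, hence in the union of at most two adjacent building blocks $H_1,H_2$; in the $X_{\kappa,\lambda}(H)$ case the relevant two building blocks $H$-copies are themselves adjacent, so $X\subseteq H_1\cup H_2$ with $H_1,H_2$ adjacent copies of (isomorphic) building blocks.

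Next I would argue that the trace of $X$ on each of the two building blocks is rigid enough to serve as (part of) an anchor. Since $X$ is complete, $X\cap H_i$ is a clique in the building block, and the bound on $k$ (together with the combinatorial constraints defining $\EF_{k,m,n}$, or the smallness of the $K^n$/$\overline{K^m}$ factors) guarantees that $X$ actually meets the building blocks in a controlled way — in particular that $|V(X)|$ forces $X$ to contain a full building block or a recognizable clique in it. Given any other induced complete subgraph $Y$ on $k$ vertices of $G$, the same analysis places $Y$ across two adjacent building blocks $H_1',H_2'$; one then chooses the isomorphism $\gamma\colon X\to Y$ carrying $X\cap H_i$ to $Y\cap H_i'$ and observes that by the $k$-CS-transitivity of the finite homogeneous building block (Theorem~\ref{Eno}, via Corollary~\ref{CharEno}) and the vertex-/edge-transitivity of the underlying $X_{\kappa,\lambda}$ or tree $T$, the restriction of $\gamma$ to any induced path of length $3$ inside $X$ — or, when $X$ lies in a single building block, the restriction to $X$ itself — extends to an automorphism of $G$ mapping $X$ to $Y$. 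Thus an induced $P^4$ in $X$ (or all of $X$, if $\diam(X)=1$ already forces $X$ small) is an anchor, which is exactly what the lemma claims.

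The main obstacle I expect is the case analysis verifying that the numerical bounds on $k$ in each item of Theorem~\ref{mainTm2} really do prevent a complete $X$ on $k$ vertices from straddling three or more building blocks in a way not captured above, and from straddling two building blocks whose cross-edges would create a clique too large for the combinatorial constraints (e.g.\ the ``no two non-adjacent vertices with $k-2$ common neighbours'' condition in $\EF_{k,m,n}$, which interacts with how a large clique can sit inside $E$). Handling $Z_{\kappa,\lambda}(K^1,K^n)$ and $Z_{2,2}(\overline{K^m},K^n)$ is slightly delicate because there the two building blocks play asymmetric roles and a clique can genuinely use vertices from both; one must check that the chosen anchor (a $P^4$) still pins down the structure-tree edge and the position within it. Everything else is bookkeeping once the tree argument rules out triangles.
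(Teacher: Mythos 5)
Your proposal follows essentially the same route as the paper: restrict attention to those families from Theorem~\ref{mainTm2} that can contain a $K^k$ at all, observe that completeness confines $X$ to a single building block or to two adjacent ones (since cross-edges only exist along an edge of the underlying tree, and a third block would be impossible), and then extend a building-block-respecting isomorphism $X\to Y$ to an automorphism of $G$ using the homogeneity of the blocks and the transitivity of the underlying tree or $X_{\kappa,\lambda}$. One slip worth correcting: your closing claim that ``an induced $P^4$ in $X$'' serves as the anchor is vacuous here, because a complete graph contains no induced path of length $3$; in this lemma the anchor must be all of $X$ (as the statement asserts), and your own argument --- choosing $\gamma$ to carry $X\cap H_i$ to $Y\cap H_i'$ and extending --- already delivers exactly that, so the $P^4$ remark should simply be deleted rather than repaired.
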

\begin{proof}
The only graphs from Theorem~\ref{mainTm2} that may contain complete graphs on $k$ vertices are isomorphic to some $X_{2,\lambda}(K^n)$, $X_{\kappa,\lambda}(K^1)$, $Y_\kappa$, $Z_{\kappa,2}(K^1,K^m)$, or $Z_{2,\lambda}(K^1,K^m)$.
\begin{itemize}
\item In $X_{2,\lambda}(K^n)$ any complete graph on $k$ vertices consists of precisely two building blocks or precisely two building blocks without one vertex depending on the parity of $k$.
\item In $X_{\kappa,\lambda}(K^1)$ and $Y_\kappa$ any complete graph on $k$ ($\ge 3$) vertices lies completely in some $K^\kappa$.
\item In  $Z_{\kappa,2}(K^1,K^m)$ and $Z_{2,\lambda}(K^1,K^m)$ any complete graph on $k$ vertices consists of precisely two adjacent building blocks.
\end{itemize}
In all these cases every isomorphism between complete subgraphs on $k$ vertices that respects the building blocks (or any $K^k$ if $G\isom Y_\kappa$) extends to some automorphism of the whole graph.
\end{proof}

\begin{Lem}\label{rddiam2}
Let $G$ and $k$ belong to one of the classes (\ref{mainTm2First})~to~(\ref{mainTm2Last}) of Theorem~\ref{mainTm2}.
If some connected induced subgraph $X$ of~$G$ on~$k$ vertices has diameter~$2$, then it contains an anchor.
\end{Lem}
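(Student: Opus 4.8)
The plan is to mimic the structure of Lemma~\ref{rdcomplete}: enumerate which graphs on the list can actually contain a connected induced subgraph $X$ on $k$ vertices of diameter exactly $2$, and in each case exhibit an explicit anchor. The natural candidate for the anchor is almost always an induced path of length $3$, as announced in the preceding discussion; when no such path is available inside $X$ (which can only happen if $X$ has too few vertices or too special a shape), one falls back on a smallest separator of $X$, or on the observation that $X$ lives inside a very small part of $G$ (e.g.\ two adjacent building blocks) so that any building-block-respecting isomorphism extends.

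First I would dispose of the tree-like graphs. In a $\lambda$-regular tree ($X_{2,\lambda}(K^1)$) nothing of diameter $2$ with $k\ge 3$ vertices is even connected-and-induced unless it is a path $P_3$, and a $P_3$ sits inside a star, so its central vertex together with one arm (or the whole $P_3$) is an anchor by vertex- and edge-transitivity of the tree. In $X_{\kappa,\lambda}(K^1)$, $Y_\kappa$, and the $Z$-graphs with a $K^1$ factor, a diameter-$2$ subgraph $X$ is contained in the union of (at most) two or three consecutive building blocks/blocks; one reads off from the local structure that $X$ meets these in a canonical way (a clique in one block, some anticlique across a separator, etc.), and any isomorphism of $X$ onto another copy $Y$ that respects this local decomposition extends — so $X$ itself, or its intersection with a single block, serves as the anchor. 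The key leverage here is Claim~\ref{disjointorequal}-style rigidity: the separators between the blocks are forced, so an isomorphism $X\to Y$ has no choice but to match separator-vertices to separator-vertices.

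The genuinely substantial cases are $X_{2,\lambda}(K^n)$, $X_{\kappa,2}(\overline{K^m})$, and above all $X_{2,2}(E)$ and $Z_{2,2}(\overline{K^m},K^n)$, $Z_{2,2}(K^1,E)$, where the building block $H$ is a nontrivial homogeneous graph. Here a diameter-$2$ connected induced $X$ on $k$ vertices either already contains an induced $P_3$ whose endpoints lie in two different building blocks — then that $P_3$ (or just its middle two vertices straddling a separator) is the anchor, because fixing it pins down which building blocks $Y$'s image of $X$ must use and in what position, and then homogeneity of $H$ together with $\ell$-S-transitivity (Corollary~\ref{CharEno}) lets us extend within the blocks — or $X$ is confined to (essentially) one or two building blocks, in which case the numerical bounds in Theorem~\ref{mainTm2} (e.g.\ $n\le k/2$, $m\le k/3$, $n\le\frac{k-|E|}{2}+1$) guarantee $X$ is small enough that its intersection with each block is a homogeneous subgraph, so any block-respecting isomorphism extends by homogeneity of the blocks and edge/vertex transitivity of $X_{\kappa,\lambda}$. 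I would organize this as: (i) locate an induced $P_3$ crossing a separator if one exists and take it as anchor; (ii) if not, argue $X$ lies in one or two blocks and use homogeneity of $H$ plus the cardinal bounds.

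The main obstacle I anticipate is the bookkeeping in case~(\ref{mainTm2LastX}) and~(\ref{mainTm2Last}): one must verify that the anchor — whether a $P_3$ or a separator — really does determine, for an arbitrary target copy $Y\isom X$, both the set of building blocks $Y$ occupies and the way $Y$ sits inside each of them, and that the leftover freedom is exactly absorbed by homogeneity of $E$ and by the automorphisms of $X_{2,2}$ (resp.\ $Z_{2,2}$). In particular one has to rule out "accidental" isomorphisms $X\to Y$ that permute vertices across the separator in a way not realized by any automorphism of $G$; Lemma~\ref{FewCuts} and the fact that $\Gamma(\cdot)$ is an isomorphism invariant of homogeneous graphs (used via Theorem~\ref{Eno}) are the tools that close this gap. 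The diameter-$2$ hypothesis is what forces $X$ to be spread over at most a bounded, small number of consecutive blocks, which is exactly what makes the finitely many sub-cases tractable.
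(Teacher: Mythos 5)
There is a genuine gap. Your central dichotomy --- either $X$ contains an induced path of length $3$ crossing a separator (which you take as the anchor), or $X$ is confined to one or two building blocks --- is false, and it misses precisely the main case. Take $G\cong X_{2,\lambda}(K^n)$ with $\lambda\ge 3$ and let $X$ consist of one full building block $B$ together with a few vertices from three building blocks adjacent to~$B$: this $X$ is connected, has diameter~$2$, meets four building blocks, and contains no induced path of length~$3$ at all (it is the join of a clique with a disjoint union of cliques, hence a cograph); complete multipartite subgraphs of $X_{\kappa,2}(\overline{K^m})$ behave the same way. So neither branch of your dichotomy applies. Moreover, even when a diameter-$2$ subgraph does contain an induced path of length~$3$, your claim that fixing it ``pins down'' which building blocks the target copy $Y$ occupies is unsubstantiated: its endpoints have distance~$2$ in $X$, so an isomorphism $X\to Y$ is not forced to align the path with the block structure in the way the long paths of Lemma~\ref{rddiam3} are; the paper deliberately avoids path anchors in the diameter-$2$ case.

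The idea you are missing is the one the paper's proof turns on. When $X$ meets at least three building blocks, the diameter-$2$ hypothesis forces the existence of a single building block $B$ adjacent to every vertex of $X\setminus B$, and the numerical bounds of Theorem~\ref{mainTm2} give $|X\cap B|<|X\setminus B|$, so that $X\cap B$ is the \emph{unique smallest separator} of the abstract graph $X$. Being an isomorphism invariant, this set must be carried by any isomorphism $X\to Y$ onto $Y\cap D$ for the corresponding block $D$ of $Y$, and homogeneity of the building blocks then extends the map; thus $X\cap B$ is the anchor. You do mention ``a smallest separator of $X$'' as a fallback, but you never identify it, never argue it is unique or canonical, and you invoke it only in the wrong branch of your case analysis. (Your treatment of the remaining cases --- $Y_\kappa$, and $X$ meeting at most two building blocks, where the paper gives separate arguments for $Z_{2,2}(K^1,E)$ via the degree-$(k-1)$ vertex and for $X_{2,2}(E)$ with $E\cong C_5$ or $L(K_{3,3})$ --- is thinner than the paper's but could plausibly be completed along the lines you sketch.)
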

\begin{proof}
Let $X$ be a connected induced subgraph of~$G$ on~$k$ vertices with diameter~$2$.
If $G\isom Y_\kappa$ then $X$ is isomorphic to some $K^{k-1}$ with one edge attached. This edge is an anchor. Thus we may assume that $G\not\isom Y_\kappa$.

Since all building blocks are homogeneous we may assume that $X$ meets at least two building blocks.
If $X$ meets precisely two building blocks, then $G\isom Z_{2,2}(K^1,E)$ for some graph $E\in \EF_{k,m,n}$ with $m\le k-2$ and $n\le \frac{k}{2}+1$ or $G\isom X_{2,2}(E)$ for some graph $E\in \EF_{k,k-2,n}$ with $m\le2$ and $n\le \frac{k-|E|}{2}+1$, by cardinality means. In the first case there is one vertex $v$ with $k-1$ neighbours (the building block $K^1$) which is an anchor, since the maximum degree of $E$ is at most $m\le k-2$ and $X-v$ is connected.
In the second case $E\isom C_5$ or $E\isom L(K_{3,3})$, again by cardinality.
If $E\isom C_5$, then $k=10$ and $X$ itself is an anchor. If $E\isom L(K_{3,3})$, then $15\le k\le 18$. Both of the maximal subgraphs of~$X$ that lie completely in one of the building blocks are anchors, since $L(K_{3,3})$ is $4$-connected and at most three vertices (for $k=15$) of these two building blocks do not lie in $X$.

We may assume that $X$ meets at least three building blocks. Let $B$ be the building block that touches all vertices of $X$, which exists by the small diameter of $X$.
If a separator in~$X$ does not contain every vertex of~$X\cap B$, then it must contain at least all the vertices in $X\sm B$.
In all possible cases $|X\cap B|$ is smaller than $|X\sm B|$ and $X\cap B$ is indeed the unique smallest separator.
Thus for every isomorphic induced copy $Y$ of~$X$ in~$G$ precisely the vertices of $X\cap B$ are mapped to \emph{the} smallest separator $S$ in~$Y$.
We may assume that $Y$ meets three building blocks, as it contains an anchor otherwise.
Since $S$ is a smallest separator, $S=Y\cap D$ for the unique building block $D$ of~$G$ that touches all vertices of $Y$.
Since the building blocks are homogeneous and $B$ is mapped to $D$ by some automorphism of~$G$, every isomorphism from $X$ to $Y$ extends to an automorphism of~$G$. In particular $X\cap B$ is an anchor.
\end{proof}

\begin{Lem}\label{rddiam3}
Let $G$ and $k$ belong to one of the classes (\ref{mainTm2First})~to~(\ref{mainTm2Last}) of Theorem~\ref{mainTm2}.
If some connected induced subgraph $X$ of~$G$ on~$k$ vertices has diameter at least three, then it contains an anchor.
\end{Lem}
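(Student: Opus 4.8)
The idea is to exploit that a subgraph $X$ of diameter at least three must ``use up'' a non-trivial portion of the structure tree of $G$, and hence meets many building blocks (or many blocks if $G\isom Y_\kappa$), in a linear pattern dictated by a path realizing the diameter. Concretely, I would first fix two vertices $u,w\in V(X)$ with $d_X(u,w)=\diam(X)\ge 3$ and the associated sequence of building blocks $B_0,\dots,B_t$ ($t\ge 3$) along the path in the structure tree that $X$ traverses. Because $k$ is fixed and the separators have bounded size, the two end building blocks $B_0$ and $B_t$ each contain only a bounded, controlled part of $X$, and crucially these two ``ends'' of $X$ lie far apart in $G$.

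The first step is to identify the anchor: I would take an induced path $P=p_0p_1p_2p_3$ of length $3$ in $X$ that is ``straight'' in the structure tree, i.e.\ its consecutive vertices move monotonically along the structure-tree path between $B_0$ and $B_t$, so that $P$ crosses at least two distinct $\SF$-separators. Such a $P$ exists because $\diam(X)\ge 3$ forces a geodesic of length $\ge3$, and a geodesic cannot backtrack in the structure tree; any shortest path between $B_0$-side and $B_t$-side vertices works, possibly after truncating to length $3$. The key claim is then: in each graph of the list (\ref{mainTm2First})--(\ref{mainTm2Last}), an induced path of length $3$ that crosses two distinct minimal separators is an anchor. This is where the bulk of the work sits, and it is a finite case check over the eight families.

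The second step is that case check. For $X_{\kappa,\lambda}(K^1)$, $Y_\kappa$ (here with the block-decomposition playing the role of building blocks), and the $Z$-families with a $K^1$ factor, a length-$3$ induced path $p_0p_1p_2p_3$ pins down which building blocks $p_1$ and $p_2$ belong to and forces $p_0,p_3$ into the neighbouring blocks; since the building blocks are homogeneous and the structure-tree is acted on transitively (the cut system is $\Aut(G)$-invariant and all separators are in one orbit, by basicness), any isomorphism $X\to Y$ restricted to $P$ identifies the corresponding structure-tree path in $Y$, and then extends block-by-block using homogeneity of the building blocks, exactly as in the proof of Lemma~\ref{rddiam2}. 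For the $X_{2,\lambda}(K^n)$, $X_{\kappa,2}(\overline{K^m})$, and $X_{2,2}(E)$, $Z_{2,2}(\overline{K^m},K^n)$, $Z_{2,2}(K^1,E)$ families one argues the same way, additionally checking that a length-$3$ induced path actually spans at least two separators — this uses the numerical bounds $n\le k/2$, $m\le k/3$, $2m+n\le k+1$, etc., which guarantee that a single building-block-pair (having at most $k-1$ or so vertices, too few to reach diameter $3$) cannot host all of $X$, so $X$ genuinely extends across three consecutive blocks and the middle edge $p_1p_2$ of $P$ sits on the separator structure in a rigid way.

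The main obstacle I anticipate is verifying that the chosen length-$3$ path really does cross \emph{two} distinct $\SF$-separators in every family, rather than being absorbable into the interior of two adjacent building blocks; equivalently, that $\diam(X)\ge3$ is incompatible with $X$ meeting only one or two building blocks given the cardinality constraints. Once that is pinned down, the extension of $\gamma|_P$ to an automorphism is routine: the path fixes a root and orientation of the relevant finite portion of the structure tree, and one grows the automorphism outward using the homogeneity of each building block (Theorem~\ref{Eno}) together with the transitivity of $\Aut(G)$ on separators guaranteed by basicness (Corollary~\ref{basicTreeExists}). A minor secondary subtlety is the graph $Y_\kappa$, where ``building block'' must be read as ``block'', and where one should double-check that an induced path of length $3$ of diameter-$3$ type necessarily uses two blocks of size $\kappa$ (or an edge-block followed by a $\kappa$-block followed by an edge-block), so that its image is again forced; this is immediate from the alternating block structure.

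\begin{proof}[Proof of Lemma~\ref{rddiam3}]
Let $X$ be a connected induced subgraph of $G$ on $k$ vertices with $\diam(X)\ge 3$, and fix $u,w\in V(X)$ with $d_X(u,w)=\diam(X)$. Choose an induced $u$--$w$ geodesic in $X$ and let $P=p_0p_1p_2p_3$ be a subpath of it of length $3$; then $P$ is an induced path in $G$.

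First suppose $G\isom Y_\kappa$. Here the relevant anchor-type is with respect to the block decomposition: since every vertex lies in exactly one edge-block and one $\kappa$-block, a path of length $3$ with no shortcut must use two distinct $\kappa$-blocks (if $\kappa\ge 3$) or alternate edge/$\kappa$/edge-blocks; in either case $p_1p_2$ lies on a bridge or a $\kappa$-block edge and $p_0,p_3$ are pinned into the adjacent blocks. Any isomorphism $X\to Y$ maps $P$ to such a path, hence respects the block decomposition on the portion of the structure tree that $X$ meets, and as the $K^\kappa$'s are complete (hence homogeneous) it extends block-by-block to an automorphism of $G$. So $P$ is an anchor and we may assume $G\not\isom Y_\kappa$.

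Now let $B_0,\dots,B_t$ be the building blocks met, in order, by the structure-tree path spanned by $X$; a geodesic of length $\ge3$ in $X$ cannot backtrack in the structure tree, so after the above choice $P$ crosses at least two distinct $\SF$-separators, say $S_1$ (between $B_i$ and $B_{i+1}$, containing $p_1$ or $p_0p_1$) and $S_2$ (between $B_{i+1}$ and $B_{i+2}$, containing $p_2$ or $p_2p_3$). In particular $t\ge i+2\ge 2$, so $X$ meets at least three building blocks; in each of the families (\ref{mainTm2First})--(\ref{mainTm2Last}) this is consistent with the numerical bounds precisely because a union of two adjacent building blocks has fewer than $k$ vertices and thus cannot by itself realise diameter $3$ on $k$ vertices (using $n\le k/2$, $m\le k/3$, $2m+n\le k+1$, $n<k$, $n\le k/2+1$ as appropriate, and $|E|<k$ for $E\in\EF_{k,m,n}$).

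Let $Y$ be an induced subgraph of $G$ isomorphic to $X$ and let $\varphi\colon X\to Y$ be any isomorphism. By the previous paragraph applied to $Y$, the image $\varphi(P)$ is again an induced path of length $3$ crossing two distinct $\SF$-separators $S_1'$, $S_2'$; since $\varphi$ preserves adjacency and the local separator structure, $S_1'$ and $S_2'$ are the separators met by $\varphi(P)$ in the same order, and the building blocks of $Y$ along $\varphi(P)$ correspond to those of $X$ along $P$. Because the cut system chosen in Corollary~\ref{basicTreeExists} is $\Aut(G)$-invariant with all separators in a single orbit, there is $\alpha_0\in\Aut(G)$ mapping the building-block $B_{i+1}$ of $X$ onto the corresponding building block of $Y$ and $S_1,S_2$ onto $S_1',S_2'$. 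Replacing $Y$ by $\alpha_0^{-1}(Y)$ and $\varphi$ by $\alpha_0^{-1}\circ\varphi$, we may assume that the structure-tree path of $Y$ coincides with that of $X$ and that $\varphi$ fixes $S_1\cup S_2$ setwise. Now, since each building block is a finite homogeneous graph (Theorem~\ref{Eno}) and $\varphi$ restricted to each building block is an isomorphism between induced subgraphs of that block, $\varphi$ extends on each building block $B_j$ to an automorphism $\beta_j$ of the building block; because consecutive building blocks in $G$ are joined either by a complete bipartite join (in the $X_{\kappa,\lambda}(H)$ and $Z$-constructions) or share exactly a separator, the $\beta_j$ can be chosen compatibly on the shared separators and then amalgamated, using the $\Aut(G)$-invariance of $\SF$, to an automorphism $\psi$ of $G$ with $\psi|_P=\varphi|_P$ and $\psi(X)=Y$. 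Hence $\gamma:=\alpha_0\circ\varphi$ is an isomorphism $X\to$ (original) $Y$ with $\gamma|_P$ extending to an automorphism of $G$ taking $X$ to $Y$. Therefore $P$ is an anchor of $X$ in $G$.
\end{proof}
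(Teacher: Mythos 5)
Your overall strategy is the same as the paper's: use an induced path $P$ of length $3$ in $X$ that spans several building blocks as the anchor, and extend an isomorphism outward from $P$ block by block using the homogeneity of the building blocks and the $\Aut(G)$-invariance of the cut system. The difference is that the paper insists $P$ meet \emph{four} building blocks (which pins down the entire structure-tree path of $P$), handles $Y_\kappa$ by taking a cut vertex of $X$ as anchor instead, and — crucially — devotes an explicit argument to the one point your proof skips over.

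That point is a genuine gap. You take an \emph{arbitrary} isomorphism $\varphi\colon X\to Y$ and assert that ``the building blocks of $Y$ along $\varphi(P)$ correspond to those of $X$ along $P$'' because $\varphi$ ``preserves adjacency and the local separator structure.'' An abstract isomorphism between induced subgraphs preserves no ambient structure, and the assertion is false as stated. In $Z_{2,2}(K^1,K^n)$ with $n\ge 2$, for example, $X$ can be an induced path whose vertices strictly alternate between $K^1$-blocks and $K^n$-blocks; an isomorphism onto another such path can shift the alternation by one, sending a vertex of a $K^1$-block (degree $2n$ in $G$) to a vertex of a $K^n$-block (degree $n+1$ in $G$), and then no automorphism of $G$ extends $\varphi|_P$, since the two kinds of building blocks lie in different $\Aut(G)$-orbits. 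The same mismatch can occur in $Y_\kappa$, where an induced $P_4$ may have its middle edge in a $K^2$-block or in a $K^\kappa$-block. The definition of anchor only requires that \emph{some} isomorphism $X\to Y$ restrict on $P$ to an extendable map, and this freedom is exactly what must be exploited: the paper observes that since $P$ has an even number of vertices, reversing $P$ swaps the two possible alternating orbit-patterns, so an aligned choice of $\gamma$ exists unless the embedding of $P$ into $P^\gamma$ is forced — and in that case either attached stars or triangles force alignment anyway, or $X$ is itself a path of length $k-1$ and is its own anchor. Your proof never makes this choice or this case distinction, so the extension step is not justified. (A lesser issue: your claim that an $X$-geodesic of length $3$ ``cannot backtrack in the structure tree'' is true in these graphs but needs the observation that adjacent building blocks are completely joined, so an induced $P_4$ whose endpoints have $X$-distance $3$ cannot be absorbed into one or two building blocks; you assert this only via cardinality, which does not by itself rule out backtracking.)
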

\begin{proof}
If $G\isom Y_\kappa$, then every minimal separator is a single vertex and an anchor.

For the other cases, let $X$ be some connected induced subgraph of~$G$ on~$k$ vertices with diameter at least three, and let $P$ be an (induced) path of length $3$ in~$X$ that meets four building blocks of~$G$.
Such a path exists since there is no building block $B$ such that $X\cap B$ contains an induced path of length $3$ whose end vertices have distance $3$ in $X$.

Let $\gamma: X\to Y$ be some isomorphism for an induced subgraph $Y$ of~$G$.
We further require that $v$ and $v^\gamma$ for each vertex $v\in P$ belong to building blocks in the same orbit of the automorphism group of~$G$.
This is a legitimate request, since the number of vertices in $P$ is even, and if $P$ embeds into $P^\gamma$ uniquely, then there are stars or triangles in $X$ and $Y$ that force $P$ and $P^\gamma$ to be aligned or $P$ is a path of length $k-1$ and thus it itself is an anchor.

Let us recursively construct an automorphism of~$G$ that maps $X$ to $Y$.
Let $\alpha_0$ be an automorphism of~$G$ with $\alpha_0|_{P}=\gamma$ such that vertices in the (homogeneous) building blocks containing $P$ are mapped to $Y$ if and only if they lie in $X$.

To define the automorphism $\alpha_l$ of~$G$ for $l\ge 1$ let $\alpha_i$ be defined for $i<l$.
First, let $W$ be the set of vertices in~$G$ with distance at most $l-1$ to the building blocks that contain~$P$.
The graphs $X$ and $Y$ induce graphs $X_1,\dots,X_n$ and $Y_1,\dots,Y_n$ with $X_j^\gamma=Y_j$ for all $1\le j\le n$ in the components of $G-W$ and $G-W^{\alpha_{l-1}}$, respectively.
Let $\alpha_l$ be an automorphism of $G$ with $w^{\alpha_l}:=w^{\alpha_{l-1}}$ for $w\in W$, that maps the component of~$G-W$ containing $X_j$ to the component of~$G-W^{\alpha_{l-1}}$ containing $Y_j$ for all $j\le n$ such that the vertices of $X$ adjacent to $W$ are mapped precisely to those vertices of $Y$ adjacent to $W^{\alpha_{l-1}}$.
Since the diameter of $X$ is less than $k$, the automorphism $\alpha_k$ of $G$ maps $X$ onto $Y$.
\end{proof}

These three lemmas show that in all cases there are anchors as needed.

\section{The global structure of $k$-CS-transitive graphs}

The following two lemmas can be shown for $k\le 2$ easily.
As the lemmas with $k\le 2$ do not play any role in the proof of Theorem \ref{mainTm2}, we do not provide the corresponding proofs for smaller $k$.

\begin{Lem}\label{noLeaves}
If $G$ is a connected $k$-CS-transitive graph with at least two ends and $k\ge 3$, then every basic structure tree of $G$ has no leaves.
\end{Lem}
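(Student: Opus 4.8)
The plan is to argue by contradiction: suppose some basic structure tree $\TF = \TF(\SF)$ of $G$ has a leaf. A leaf of $\TF$ is either an $\SF$-block $B$ incident with exactly one separator $W \in \WF$, or a separator $W$ incident with exactly one block. Since $G$ is $\Aut(G)$-invariantly built and $\TF$ is a tree on which $\Aut(G)$ acts, whatever happens at one leaf happens at all vertices in its orbit; the first thing I would establish is that the separator-vertices of $\TF$ cannot be leaves (each separator $W$ arises from a cut $(A,B)\in\SF$ both of whose wings contain a ray, so $W$ must be a subset of at least two blocks sitting on opposite sides, giving $W$ degree $\ge 2$ in $\TF$). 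Hence a leaf must be a \emph{block} $B$ lying below a single separator $W = A \cap B$, where $(A,\sim)$ (say) is the cut with $B$ on the small side. By the defining property of a basic structure tree, both wings of $(A,B)$ contain a ray, so $B$ is genuinely infinite (it contains a ray), while it is attached to the rest of $G$ only through the finite set $W$ of size $n := |W|$.

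Next I would exploit $k$-CS-transitivity to derive a contradiction from the existence of such a ``pendant'' infinite block $B$ meeting the rest of $G$ only in $W$. The idea is to find two connected induced subgraphs of order $k$ that are isomorphic but sit ``differently'' relative to the structure tree, so that no automorphism — which must preserve $\SF$ and hence map blocks to blocks and separators to separators (Theorem~\ref{VertexCuts}: each $\CF$-block, and each structure-tree vertex, is preserved setwise by the automorphisms coming from the invariant $\SF$) — can carry one to the other. Concretely: since $B$ is infinite and $G$ is connected with $\ge 2$ ends, deep inside $B$ (far from $W$) we can find a connected induced subgraph $X$ of order $k$ that lies entirely within $B \setminus W$ in a single ``branch'' of $\TF$, so that $X$ is separated from the rest of $G$ by a single separator and from any end ``on the other side'' by cuts of order $n$. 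On the other hand, near a non-leaf block — which exists, since $\TF$ being a tree with at least one edge has non-leaf vertices unless $\TF$ is a single edge, a case one rules out because then $G$ would have a finite separator with exactly two sides each a single block, contradicting having $\ge 2$ ends in the right configuration — we can find a connected induced subgraph $X'$ of order $k$ that straddles a separator with blocks on \emph{both} sides extending to infinity, i.e.\ $X'$ is ``surrounded'' in a way $X$ is not. If $X \cong X'$ as induced subgraphs, $k$-CS-transitivity produces an automorphism taking one to the other, contradicting the structural asymmetry just described. The remaining task is to \emph{engineer} $X$ and $X'$ to be isomorphic; here the natural move is to take $X = X'$ literally — pick any connected induced $X$ of order $k$ meeting the leaf block $B$ and its neighbour across $W$, then observe that a copy of this same abstract graph also occurs in the ``doubly infinite'' part of $G$ (using that the neighbouring block of $B$ is itself, by transitivity of the situation, adjacent to further blocks), so the two occurrences of the \emph{same} graph $X$ force an automorphism whose existence violates the leaf structure.

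The main obstacle I anticipate is making the last step rigorous: it is not a priori clear that an order-$k$ connected induced subgraph straddling $W$ at the leaf block is isomorphic to one straddling an interior separator, because the blocks can be complicated (the $\EF_{k,m,n}$ graphs, complete graphs, etc.), and the ``pendant'' geometry might actually be reproduced locally everywhere — in which case the contradiction is not with a single automorphism but with counting. The way I would handle this is via Lemma~\ref{FewCuts}: the leaf block $B$, being attached only through $W$, has the property that for a suitably chosen pair of vertices $x \in B \setminus W$, $y \notin B$, the number of order-$n$ separators separating $x$ properly from $y$ is \emph{strictly larger} (or smaller) than for an analogous interior pair, because on the leaf side there is ``nowhere else to go.'' An automorphism produced by $k$-CS-transitivity matching a neighbourhood of $x$ to a neighbourhood of an interior vertex $x'$ would then have to match these two separator-counts, contradicting Lemma~\ref{FewCuts}. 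In effect the proof mirrors Claims~\ref{BlockComplete} and~\ref{disjointorequal} in the proof of Theorem~\ref{mainTm}: use transitivity to build an automorphism, then count separators and invoke Lemma~\ref{FewCuts} for the contradiction. I would expect to need also the observation that $\TF$ has infinitely many vertices (so leaves, if any, are plentiful and their blocks are all isomorphic infinite graphs), which follows because $G$ has at least two ends and the cuts in $\SF$ are finite, forcing the tree to be infinite.
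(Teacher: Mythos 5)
Your setup is sound and matches the paper's: a leaf of the structure tree must be a block $X$ attached to the rest of $G$ through a single $\SF$-separator $S=A\cap B$, and $X$ is infinite because the wing containing it contains a ray. But from that point on your argument is a plan rather than a proof, and the plan has a genuine gap exactly where you yourself flag it: you never produce the two isomorphic connected induced subgraphs of order $k$ that sit in incompatible positions. Your fallback --- take one copy meeting the leaf block and ``observe that a copy of this same abstract graph also occurs in the doubly infinite part of $G$'' --- is precisely what needs proof, and there is no a priori reason why an order-$k$ connected subgraph of the (unknown, possibly wild) infinite leaf block should reappear anywhere else, let alone straddling an interior separator. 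Likewise your Lemma~\ref{FewCuts} counting idea needs an automorphism to count against, and producing that automorphism requires the isomorphic pair you have not constructed.

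The paper closes this gap with three concrete steps that are absent from your proposal. First, every vertex of the leaf block has bounded distance to $B$: otherwise an induced path on $k$ vertices starting in $S$ could be mapped, by $k$-CS-transitivity, onto one lying entirely in $X\sm S$, and the image of $S$ under the resulting automorphism would be a second $\SF$-separator inside $X$, contradicting that $S$ is the only one. Second, an infinite connected graph of bounded radius has a vertex $x$ of infinite degree; taking $N$ to be an infinite set of neighbours of $x$ pointing away from $B$, Ramsey's theorem gives an infinite clique or an infinite independent set in $G[N]$. Third, in either case one exhibits two isomorphic connected $k$-vertex subgraphs built around $x$ and $N$ (two induced stars $K_{1,k-1}$, or two complete graphs $H$ and $H-v+x$) that no automorphism of $G$ can exchange. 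This chain --- bounded radius via mapped induced paths, hence infinite degree, hence Ramsey, hence a concrete isomorphic pair --- is the actual content of the lemma; without it the proposal does not constitute a proof.
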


\begin{proof}
Let $\SF$ be a minimal cut system of $G$ such that the structure tree $\TF$ of $G$ and $\SF$ is basic.
Suppose that $\TF$ contains a leaf. Let $X$ be some $\SF$-block representing a leaf in $\TF$, and let $(A,B)\in\SF$ be a separation with $X\sub A$ and $A\cap B\sub X$.
Then $A\cap B$ is the only $\SF$-separator in $X$ and $X=A$.
Since there is a ray in $G[A]$, the block $X$ is infinite.
There is no vertex in $X$ that has distance $k+1$ to $B$, as an induced path starting in $A\cap B$ could be mapped into $X\sm (A\cap B)$ and as this would contradict the fact that $A\cap B$ is the only $\SF$-separator in $X$.
Thus there are vertices of infinite degree in $X$.
Let the vertex $x\in X$ have infinite degree and minimal distance to $B$ with this property. 
Let $N$ be the infinite set of neighbours of $x$ with $d(v,B)>d(x,B)$ for all $v\in N$.
Then there is a $K^{\aleph_0}$ or its complement in $G[N]$.
As $k-2$ independent vertices in $N$ together with $x$ and one neigbour of $x$ that is neither contained in $N$ nor adjacent to any vertex in $N$ induce a subgraph that could be mapped onto a subgraph induced by $k-1$ independent vertices in $N$ and $x$, there have to be a $K^{\aleph_0}$ in $G[N]$.
This yields to a contradiction, too.
Let $H$ be a complete graph on $k$ vertices in $G[N]$, and let $v\in V(H)$.
Then there is no automorphism of $G$ that maps $H-v+x$ to $H$, which is a  contradiction to the $k$-CS-transitivity of~$G$.
\end{proof}

\begin{Lem}\label{infDiam}
For $k\ge 3$, every connected $k$-CS-transitive graph $G$ with at least two ends has infinite diameter.
\end{Lem}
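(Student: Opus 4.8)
The plan is to derive a contradiction from the assumption that $G$ has finite diameter, say $\diam(G)=d<\infty$, by exploiting the basic structure tree of $G$ together with Lemma~\ref{noLeaves}. First I would fix a minimal cut system $\SF$ so that the structure tree $\TF$ of $G$ and $\SF$ is basic (Corollary~\ref{basicTreeExists}); by Lemma~\ref{noLeaves} the tree $\TF$ has no leaves, so every $\SF$-block is an internal vertex of $\TF$ and in particular contains at least two distinct $\SF$-separators, and both wings of every separation in $\SF$ contain a ray. The key geometric fact I would establish is that distances in $G$ along the structure tree cannot collapse: if a path in $G$ passes through many successive $\SF$-separators, its endpoints are far apart. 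Concretely, since all $\SF$-separators lie in a single $\Aut(G)$-orbit and have the same finite order $n$, and since $\TF$ is a tree with no leaves (hence infinite, with infinitely many blocks along any ray of $\TF$), one can walk from a fixed block out along a ray of $\TF$ through blocks $B_0, B_1, B_2, \dots$ separated by separators $S_1, S_2, \dots$; a shortest path in $G$ from a vertex deep inside $B_0$ to a vertex deep inside $B_m$ must meet each $S_i$, and so has length growing with $m$.

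The heart of the argument is to show each ``step'' contributes a bounded-below positive amount to the distance. For this I would use that the structure tree separates things properly: consecutive separators $S_i$ and $S_{i+1}$ are distinct, and a path from $B_0$-side to $B_{m}$-side must pass through $S_1, \dots, S_m$ in order. If the separators were pairwise disjoint this would already give length $\ge m-1$ trivially; in general one invokes Lemma~\ref{FewCuts}, which bounds the number of order-$n$ separators separating any fixed pair of vertices, to conclude that a pair of vertices at bounded distance can be separated by only boundedly many $\SF$-separators. Hence if $\diam(G)=d$ then no pair of vertices is separated by more than $N=N(d,n)$ separators of $\SF$; but picking a ray in the (leafless, infinite) structure tree $\TF$ and choosing vertices in blocks $M+1$ apart along it for large $M>N$ — using that $\SF$-blocks, being internal, are nondegenerate so such vertices exist — yields a pair separated by more than $N$ distinct $\SF$-separators, a contradiction.

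The main obstacle I anticipate is the bookkeeping showing that a ray in the structure tree genuinely produces, for each $m$, a pair of vertices of $G$ that is separated \emph{properly} by $m$ distinct $\SF$-separators, since Lemma~\ref{FewCuts} is stated for \emph{proper} separation. One must check that for a ray $B_0, S_1, B_1, S_2, \dots$ in $\TF$, a vertex $u$ lying in $B_0$ but in no $S_i$ and a vertex $w$ lying in $B_m$ but in no $S_i$ are separated properly by each $S_1, \dots, S_m$ — this is where the definition of $\SF$-block (condition that each separator $A\cap B$ is contained in the block, with blocks maximal ``between'' separators) and the ``separated properly'' convention for cuts of minimal order must be combined carefully, together with the fact from Corollary~\ref{basicTreeExists}/Theorem~\ref{VertexCuts} that each block sits at a unique vertex of $\TF$. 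Once that is in place, the contradiction with Lemma~\ref{FewCuts} (or, alternatively, directly with finiteness of the diameter, since $m$ separators force a path of length $\ge$ something unbounded in $m$) closes the proof.
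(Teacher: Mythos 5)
Your overall strategy (a ray in the leafless basic structure tree, with the separators along it forcing distances to grow) is the same as the paper's, but there is a genuine gap at the one step that actually requires work: handling separators that are not pairwise disjoint. You claim that Lemma~\ref{FewCuts} yields a uniform bound $N=N(d,n)$ such that no pair of vertices is properly separated by more than $N$ separators of order $n$ once $\diam(G)=d$. Lemma~\ref{FewCuts} gives no such uniform bound: it only says that each \emph{fixed} pair of vertices is properly separated by finitely many separators of a given order, and that finite number may depend on the pair. The uniform statement is in fact false in general: take a $3$-regular tree $T$ and add one new vertex $v$ joined to all of $V(T)$. This graph has more than one end, its basic structure tree (with separators of the form $\{v,w\}$, $w\in V(T)$) has no leaves, yet it has diameter $2$, and pairs of vertices at distance $2$ are properly separated by arbitrarily many order-$2$ separators (all containing $v$). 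So for the pair you pick in blocks $M+1$ apart, Lemma~\ref{FewCuts} is perfectly consistent with what happens, and nothing prevents all $M$ separators from sharing a common vertex, in which case a path crossing all of them need not be long. Since this example is not $k$-CS-transitive, any correct proof must use the $k$-CS-transitivity beyond its role in Lemma~\ref{noLeaves}; your proposal never does.

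The paper closes exactly this gap with a $k$-CS-transitivity argument: if some vertex $x$ lay in infinitely many of the separators met by a double ray of $\TF$, then $x$ would have neighbours in infinitely many blocks along that ray, producing two induced stars with $k-1$ leaves whose leaf sets spread over block sets of different diameter in $\TF$; $k$-CS-transitivity would map one star onto the other, while their two outermost leaves are properly separated by different (finite, by Lemma~\ref{FewCuts}) numbers of separators of the same order --- a contradiction. Hence infinitely many of the separators on the ray are pairwise disjoint, and then a path joining vertices lying beyond $n$ pairwise disjoint separators must contain a distinct vertex in each of them, so its length is at least $n$; no uniform version of Lemma~\ref{FewCuts} is needed. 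To repair your write-up you would have to insert this (or an equivalent) use of the $k$-CS-transitivity hypothesis before concluding.
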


\begin{proof}
Let $\SF$ be a minimal cut system of $G$ such that the structure tree $\TF$ of $G$ and $\SF$ is basic.
Then there is a double ray $R$ in $\TF$ as there is no leaf in~$\TF$ by Lemma \ref{noLeaves}.
This ray hits infinitely many different (finite) $\SF$-separators.
Suppose that there is a vertex $x$ in $G$ that lies in infinitely many of these separators.
Since $x$ has neighbours in infinitely many $\SF$-blocks on $R$, this results in two induced stars with $k-1$ leaves in~$G$ whose corresponding $\SF$-blocks (regarded as vertices in $\TF$) induce vertex sets with different diameter in $\TF$.
This is a contradiction according to Lemma~\ref{FewCuts}, since the leaves of these two stars that are furthest away in $\TF$ are separated by a different number of separators of the same (finite) order.
Thus we conclude that there are infinitely many pairwise disjoint $\SF$-separators on $R$.
Two $\SF$-separators $S_1,S_2$ that have $n$ disjoint $\SF$-separators on their $S_1$--$S_2$ path in $\TF$ have distance at least $n$ in~$G$.
\end{proof}

The separators come in very handy.
With a simple application of Menger's Theorem one may construct order of the separators many disjoint rays in~$G$ following any ray in a basic structure tree of~$G$.
Every such ray in~$G$ induces a (connected) path in every block and contains at most one vertex from each separator.
This implies that every vertex lies in some block.

\begin{Lem}\label{SepDisj}
Let $k\ge 3$, let $G$ be a connected $k$-CS-transitive graph with at least two ends and let $\SF$ be a minimal cut system of~$G$ such that the structure tree of~$G$ and $\SF$ is basic.
Let $S$ be an $\SF$-separator.
If every $s\in S$ has for every $\SF$-block $X$ containing $S$ an adjacent vertex in~$X\sm S$, then $S$ is disjoint to any other $\SF$-separator $S'$.
\end{Lem}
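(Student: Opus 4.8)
The statement concerns a basic structure tree, so by Lemma~\ref{noLeaves} it has no leaves, and by Lemma~\ref{FewCuts} any two vertices are separated properly by only finitely many separators of the (common, minimal) order $n$. Suppose for contradiction that there is a second $\SF$-separator $S'$ with $S\cap S'\ne\emptyset$. By Claim~\ref{disjointorequal}-type reasoning (or directly from nestedness) we may pick a separation $(A,B)\in\SF$ with separator $S$ and an automorphism $\alpha$ with $S^\alpha=S'$, so that $(A,B)$ and $(A^\alpha,B^\alpha)$ are nested; after relabelling we may assume $A\cap A^\alpha\sub B\cap B^\alpha$, and there is a vertex $v\in S\cap S'$. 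The plan is to build, using the hypothesis that every $s\in S$ has a neighbour in $X\sm S$ inside every block $X\ni S$, a small connected induced subgraph $W$ on $k$ vertices that ``sees'' the fine structure near $S$ in a way that an isomorphic copy near $S'$ cannot match, contradicting $k$-CS-transitivity.

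\textbf{Key steps.} First I would choose an $\SF$-block $X$ with $X\supseteq S$ lying on the $X$--side of $(A,B)$, and a block $X'$ with $X'\supseteq S'$; the hypothesis gives, for each $s\in S$, a neighbour $s'\in X\sm S$. Using these neighbours, together with vertices of $S$ itself and a short path reaching into a second block across $S$, assemble a connected induced subgraph $W$ of order exactly $k$ that contains all of $S$, meets at least two blocks at $S$, and whose combinatorial type records that $S$ (realised inside $W$ as a set of vertices each adjacent to the ``other side'') is \emph{disjoint} from every other $\SF$-separator—concretely, by Lemma~\ref{FewCuts} the vertices of $W$ on the two sides of $S$ are separated properly by only the separator $S$ among separators ``near'' them. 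Now apply $k$-CS-transitivity: an isomorphic copy $W'$ of $W$ placed so that the image of $S$ overlaps $S'$ but is not equal to it exists (it exists because $S,S'$ are both orbits of the same vertex set under $\Aut(G)$ and $S\cap S'\ne\emptyset$, $S\ne S'$), yet the required automorphism would have to carry the separator-structure of $W$ onto that of $W'$, forcing $S$ onto a set meeting $S'$ properly, which contradicts that distinct $\SF$-separators are disjoint (the $\SF$-separators being pairwise disjoint by the first part of the argument, or more carefully: forcing a wrong count of separators of order $n$ separating two fixed vertices, via Lemma~\ref{FewCuts}, exactly as in the proof of Claim~\ref{disjointorequal}).

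\textbf{Where the difficulty lies.} The main obstacle is the bookkeeping in the construction of $W$: one must guarantee simultaneously that $\lvert W\rvert=k$, that $W$ is connected and induced, that $S\sub W$, that $W$ genuinely reaches into at least one block on each side of $S$ (so the ``anchor'' is $S$ and not something smaller), and—crucially—that in the target copy $W'$ the image of $S$ is forced to be an $\SF$-separator while meeting $S'$ nontrivially, so that the two separators cannot be disjoint yet also cannot be equal, contradicting Claim~\ref{disjointorequal}. The hypothesis on $S$ (every $s\in S$ having a neighbour in $X\sm S$ for \emph{every} block $X\ni S$) is exactly what is needed to keep $S$ from collapsing: without it, some $s\in S$ could be ``invisible'' from one side and the copy $W'$ could be realised with its distinguished set lying entirely inside a single block, destroying the separator structure. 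So the crux is to verify that this hypothesis lets us pad $W$ out to $k$ vertices on \emph{both} sides of $S$ simultaneously; once $W$ is in hand, the contradiction is the now-familiar Lemma~\ref{FewCuts} counting argument from the proof of Claim~\ref{disjointorequal}.
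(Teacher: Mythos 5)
Your overall strategy---use $k$-CS-transitivity to produce two isomorphic connected induced subgraphs on $k$ vertices whose relevant vertex pairs are properly separated by different numbers of order-$n$ separators, and then invoke Lemma~\ref{FewCuts}---is the right one and agrees with the paper in spirit, but the concrete construction you propose does not work and the decisive step is left unperformed. First, you want a connected induced $W$ of order exactly $k$ that \emph{contains all of $S$}; nothing in the lemma bounds $\abs{S}$ by $k$, so such a $W$ need not exist. Second, your contradiction is circular in one branch (``forcing $S$ onto a set meeting $S'$ properly, which contradicts that distinct $\SF$-separators are disjoint''---disjointness is precisely the conclusion being proved) and merely deferred in the other (``exactly as in the proof of Claim~\ref{disjointorequal}''---but that claim lives in the $2$-distance-transitive setting, where one may prescribe the images of two chosen vertices at distance $2$; under $k$-CS-transitivity you only get \emph{some} isomorphism between the two subgraphs extending to an automorphism, so you cannot force the image of $S$ to overlap $S'$ in a controlled way, and the argument does not transfer as stated).

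The idea you are missing is that one need not capture $S$ inside the subgraph at all: a single common vertex $s\in S\cap S'$ and \emph{paths} suffice. The paper takes cuts $(A,B)$ and $(A',B')$ with separators $S$ and $S'$, arranged by nestedness so that $S'\sub B$ and $S\sub B'$, builds one induced path $P$ of length $k-2$ ending in $s$ with its remaining vertices in $A\sm B$, and then extends $P$ at $s$ by one vertex in two different ways: to a neighbour $x\in B'\sm A'$ and to a neighbour $y\in A'\sm A$ (this is exactly where the hypothesis that $s$ has a neighbour in $X\sm S$ for every block $X\ni S$ is used---it supplies the extension on the correct side, rather than serving as a vague padding device). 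The induced $k$-vertex paths $Px$ and $Py$ are isomorphic, so some automorphism maps one onto the other; but $S'$ properly separates the endvertices of $Px$ and not those of $Py$, while by nestedness every separator properly separating the endvertices of $Py$ also properly separates those of $Px$. Hence the two counts, finite by Lemma~\ref{FewCuts} and preserved by automorphisms, differ---a contradiction. Reworking your sketch around this two-extensions comparison, rather than around a subgraph containing $S$, is what is needed to close the gap.
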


\begin{proof}
Suppose that distinct $\SF$-separator $S$, $S'$ contain a common vertex $s$ and for every $\SF$-block $X$ containing $S$ there is an edge between $s$ and $X\sm S$. Let $(A,B),(A',B')\in\SF$ be cuts with separators $S$ and $S'$, respectively. 
We may assume that $S'\sub B$ and $S\sub B'$ since $\SF$ is nested.

There is an induced path $P$ of length $k-2$ ending in $s$ whose other vertices lie in~$A\sm B$.
By assumption $s$ has at least two neighbours $x$ and $y$ such that $x$ lies in $B'\sm A'$ and $y$ lies $A'\sm  A$.
The paths $Px$ and $Py$ of length $k-1$ can be mapped onto each other with an automorphism of $G$ by the $k$-CS-transitivity of $G$.
But the endvertices of~$Px$ and of~$Py$ are separated properly by a different number of $\SF$-separators, since any $\SF$-separator separating the endvertices of $Py$ properly separates also the endvertices of $Px$ properly as $\SF$ is nested, and on the other hand the separator $S'$ separates only the endvertices of $Px$ properly.
This contradicts the choice of $x$ and $y$.
\end{proof}

Let $k\ge 3$, let $G$ be a connected $k$-CS-transitive graph with at least two ends, and let $\SF$ be a minimal cut system of $G$ such that the basic structure tree of~$G$ and $\SF$ is basic.
There are two profoundly different cases. In the first case the graph is covered with $\SF$-separators while in the second case there are vertices in $G$ that do not belong to any $\SF$-separator.

Before we begin investigating these cases we need some definitions.
For an $\SF$-block $X$ we define the \emph{open ($\SF$-)block}
$$\open{X}:=X\sm\bigcup\set{A\cap B\mid (A,B)\in\SF}.$$
A {\em $k$-spoon} is an induced subgraph of $G$ that consists of a triangle and a path starting in one of its triangel vertices with all in all precisely $k$ vertices.
A spoon $H$ {\em pokes} in an $\SF$-block $X$, an $\SF$-separator $S$, or two $\SF$-separators $S,S'$ if its degree 2 vertices of the triangle are contained in $\open{X}$, $S$, or one in $S$ and one in $S'$, respectively.
A {\em $k$-fork} is another induced subgraph of $G$ that consists of its {\em prongs}, a pair of two non-adjacent vertices, and of its \emph{handle}, a path such that both prongs are adjacent to the same endvertex of the handle, and has $k$ vertices.
A fork $H$ {\em pokes} in an $\SF$-block $X$, an $\SF$-separator $S$, two $\SF$-blocks $X,Y$, or two $\SF$-separators $S,S'$ if its prongs are contained in $\open{X}$, in $S$, meet $\open{X}$ and $\open{Y}$, or meet $S$ and $S'$, respectively.

\subsection{Empty open blocks}

This is the slightly simpler case.
If $k$ is odd this is the only possible case as we will show in Lemma \ref{openBlocksEven}.
 
\begin{Lem}\label{completerpartite}
Let $k\ge 3$, let $G$ be a connected $k$-CS-transitive graph with at least two ends, and let $\SF$ be a minimal cut system such that the structure tree of $G$ and $\SF$ is basic and such that for some $\SF$-block $X$ its open block $\open{X}$ is empty.
If $S,S'$ are distinct $\SF$-separators that both lie in $X$, then $ss'\in E(G)$ for all $s\in S$ and $s'\in S'\sm S$, and any two distinct $\SF$-separators in~$X$ are disjoint.
\end{Lem}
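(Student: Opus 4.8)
The plan is to use the open block $\open X$ being empty to force every vertex of $X$ into some $\SF$-separator, and then exploit the $k$-CS-transitivity together with Lemma~\ref{FewCuts} exactly as in the proof of Lemma~\ref{SepDisj}. First I would observe that, since $\open X = \es$, every vertex of $X$ lies in at least one $\SF$-separator contained in $X$, and (using that the structure tree is basic and Corollary~\ref{basicTreeExists}, plus the Menger-type remark preceding Lemma~\ref{SepDisj}) that $X$ contains a path belonging to a ray of $G$ meeting infinitely many separators; in particular $X$ contains at least two distinct $\SF$-separators as soon as $|X|>n$, and more to the point, any $\SF$-separator $S\sub X$ and any $s\in S$ has a neighbour in $X$ lying in a different separator or outside $S$.

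For the adjacency claim, let $S,S'\sub X$ be distinct, $s\in S$, $s'\in S'\sm S$, and suppose $ss'\notin E(G)$. Since $X$ is a single $\SF$-block and $\open X=\es$, the vertices $s,s'$ together with a neighbour of $s$ in $X$ and enough further vertices of $X$ reachable along the block (recall $X$ is connected and every vertex of $X$ sits in a separator) give an induced path $P$ of length $k-2$ inside $X$ ending at $s$. Now, because $s$ and $s'$ are both in $X$ but non-adjacent, I can find two neighbours $x,y$ of the far endpoint of $P$, or better, mimic Lemma~\ref{SepDisj}: extend $P$ to two induced paths $Ps'$-like objects whose endvertices are separated properly by different numbers of $\SF$-separators. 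Concretely, since $ss'\notin E(G)$, the separator $S'$ separates $s$ from some vertex reachable from $s$ through $X\sm S'$, while an $S$-side path does not cross $S'$; pushing this through, one produces two induced paths of length $k-1$ with a common initial segment, related by a $k$-CS-transitivity automorphism, whose endpoints are separated properly by different numbers of order-$n$ separators — contradicting Lemma~\ref{FewCuts}. Hence $ss'\in E(G)$.

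For disjointness, suppose distinct $S,S'\sub X$ share a vertex $s$. By the adjacency part just proved, every vertex of $S\sm\{s\}$ is adjacent to every vertex of $S'\sm S$ and vice versa; combined with the fact (from $\open X=\es$ and $X$ being one block) that each $s\in S$ has a neighbour in $X\sm S$ lying in another separator, the hypotheses of Lemma~\ref{SepDisj} are met for $S$ — every $s\in S$ has, for every $\SF$-block containing $S$, namely $X$, an adjacent vertex in $X\sm S$. Lemma~\ref{SepDisj} then gives $S\cap S'=\es$, contradicting $s\in S\cap S'$.

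The main obstacle I anticipate is the bookkeeping in the adjacency step: one must build the induced path $P$ of length $k-2$ \emph{inside} $X$ ending at $s$ — this needs that $X$ is large enough and sufficiently connected along its separators, which should follow from basicness (both wings contain rays, minimality of $n$) and $\open X=\es$, but checking that the path can be taken induced and that its two extensions through $S'$ versus through a different part of $X$ are separated properly by genuinely different separator counts requires care with the nestedness of $\SF$, just as in Lemma~\ref{SepDisj}. Once that path is in hand, the contradiction with Lemma~\ref{FewCuts} is immediate, and the disjointness is then a clean appeal to Lemma~\ref{SepDisj}.
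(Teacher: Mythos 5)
Your reduction of the disjointness claim to Lemma~\ref{SepDisj} via the adjacency claim is essentially what the paper does (modulo the slip that $X$ is not the only $\SF$-block containing $S$), but the adjacency claim itself --- the heart of the lemma --- is where your proposal has a genuine gap. You propose to ``mimic Lemma~\ref{SepDisj}'': build an induced path $P$ of length $k-2$ ending at $s$ and extend it in two ways to paths of length $k-1$ whose endvertices are separated properly by different numbers of $\SF$-separators. But the asymmetry that drives the proof of Lemma~\ref{SepDisj} comes precisely from the hypothesis there that $s$ lies in \emph{both} separators: this is what supplies one neighbour $x$ of $s$ strictly beyond $S'$ and another neighbour $y$ that is not, so that $S'$ properly separates the endvertices of $Px$ but not of $Py$. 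In the present situation $s\in S$, $s'\in S'\sm S$ and $s\notin S'$; you have not exhibited (and it is not clear how to exhibit) two extensions of a common initial segment whose endpoint pairs are crossed by genuinely different numbers of order-$n$ separators, using only the assumption $ss'\notin E(G)$. The sentence ``pushing this through, one produces two induced paths \dots'' is exactly the step that needs a construction, and you yourself flag it as the obstacle; as written it is a restatement of the desired conclusion rather than an argument.

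The paper's proof supplies the missing idea by a global comparison instead of a local one. Since every vertex of $G$ lies in an $\SF$-separator in this case, one can follow a path in the structure tree to obtain a reference path of arbitrary length in $G$ with the property that any two of its vertices at distance at least $2$ lie in no common $\SF$-block. One then takes an induced $s$--$s'$ path $P$ whose inner vertices avoid the component of $G-S'$ containing $S\sm S'$, elongates it to an induced path $P'$ on $k$ vertices, and applies $k$-CS-transitivity to map $P'$ onto a segment of the reference path. Pulling the reference property back along this automorphism shows that any two vertices at distance at least $2$ on $P$ lie in different blocks; since $s$ and $s'$ both lie in $X$, this forces $d(s,s')<2$, i.e.\ $ss'\in E(G)$. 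Only after this does Lemma~\ref{SepDisj} enter, exactly as in your final paragraph. So your overall architecture is right, but the adjacency step needs this ``straighten the path along the structure tree'' argument (or an equivalent), not an adaptation of the two-neighbour trick from Lemma~\ref{SepDisj}.
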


\begin{proof}
In the case that all vertices lie in $\SF$-separators, there is a path of arbitrary length (following a path in the structure tree of~$G$ and $\SF$) such that any two vertices with distance $2$ or greater do not lie in the same $\SF$-block.
There is also an induced $s$--$s'$ path $P$ whose inner vertices do not meet the component $C$ of~$G-S'$ that contains $S\sm S'$.
If the length of $P$ is less than $k-1$ we elongate $P$ from $s$ into $C$.
Thus there is an induced subpath $P'$ of $P$ of length $k-1$.
By the $k$-CS-transitivity any two vertices of distance at least two on $P'$ and hence also on $P$ lie in different blocks.
This implies $d(s,s')<2$.
With Lemma~\ref{SepDisj} it follows that any two distinct $\SF$-separators in~$X$ are disjoint.
\end{proof}

\begin{Lem}\label{Y_kappa}
Let $k\ge 3$, let $G$ be a connected $k$-CS-transitive graph with at least two ends, and let $\SF$ be a minimal cut system such that the structure tree of $G$ and $\SF$ is basic.
If every open $\SF$-block is empty, then any two $\SF$-blocks are isomorphic, or $k$ is odd and there is a cardinal $\kappa$ such that $G\isom Y_\kappa$.
\end{Lem}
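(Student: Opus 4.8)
The plan is to start from the hypothesis that every open $\SF$-block is empty, i.e.\ every vertex of $G$ lies in some $\SF$-separator, and exploit the combination of Lemma~\ref{completerpartite} and Lemma~\ref{SepDisj} to control how separators sit inside a block. First I would fix an $\SF$-block $X$ and analyse the $\SF$-separators contained in it. By Lemma~\ref{completerpartite} any two distinct such separators $S,S'$ are disjoint and induce a complete bipartite graph between $S$ and $S'$; since every vertex of $X$ lies in some separator and (by the basic structure tree together with the Menger-type remark preceding Lemma~\ref{SepDisj}) $X$ is covered by the separators it contains, the block $X$ is a complete multipartite graph whose parts are precisely the $\SF$-separators lying in $X$. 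The number $\lambda_X$ of such parts and their common size $\mu_X$ are then the only isomorphism invariants of $X$.

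Next I would show these invariants do not depend on $X$. The size of any $\SF$-separator is the fixed order of the cuts in the minimal cut system $\SF$, so $\mu_X$ is the same for all blocks; call it $\mu$. For $\lambda_X$ I would use $k$-CS-transitivity together with $k$-spoons or $k$-forks poking into the relevant separators: if two blocks $X,Y$ had a different number of parts, one could find a connected induced subgraph on $k$ vertices (a spoon or a fork whose prongs sit in separators of one block but whose tail runs out through the structure tree) that embeds into $G$ in two essentially different ways — one way forced to stay inside a single block and one not — contradicting Lemma~\ref{FewCuts} in the same style as Lemmas~\ref{noLeaves} and~\ref{infDiam}. This forces $\lambda_X$ to be a constant $\lambda$, hence any two $\SF$-blocks are isomorphic, which is the first alternative of the statement.

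It remains to see when the second alternative arises, and here the parity of $k$ enters. If $\mu\ge 2$, every block is a complete multipartite graph with parts of size $\mu\ge2$; inside such a block the separators of $X$ are themselves independent sets, so a block containing at least two separators contains an induced $4$-cycle, and more generally admits induced paths whose endvertices lie in distinct separators at distance $2$ — these can be grown along the structure tree into an induced path of length $k-1$ and then, using $k$-CS-transitivity, mapped so as to violate the ``no two vertices at distance $\ge2$ in a common block'' phenomenon established in Lemma~\ref{completerpartite}, unless $k$ is odd and each block contains exactly one separator besides allowing single-edge blocks. Tracking which configurations survive, the only consistent possibility with non-isomorphic blocks is that $G$ has exactly two kinds of blocks — single edges and blocks of order $\kappa$ — with every vertex in exactly one of each, i.e.\ $G\isom Y_\kappa$, and the counting shows this is compatible with $\SF$-transitivity precisely when $k$ is odd. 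The forward-referenced Lemma~\ref{openBlocksEven} (that empty open blocks force $k$ odd in the genuinely non-uniform case) is consistent with, and will be used to streamline, this last bookkeeping.

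\textbf{Main obstacle.} The delicate point is the parity argument in the last paragraph: ruling out all ``mixed'' block structures except $Y_\kappa$ requires carefully choosing the right $k$-vertex induced subgraph (spoon versus fork, and where its special vertices poke) so that its two embeddings into $G$ are separated by different numbers of $\SF$-separators, and checking that the length bookkeeping forces $k$ odd exactly in the surviving case; the complete-multipartite-versus-single-edge dichotomy and the evenness/oddness of the path lengths involved have to be reconciled simultaneously.
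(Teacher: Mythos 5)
There is a genuine gap, and it begins with the structural claim in your first paragraph. Lemma~\ref{completerpartite} tells you that two \emph{distinct} $\SF$-separators inside a block are disjoint and completely joined; it says nothing about the graph induced on a single separator, which need not be an independent set (in the graphs of Theorem~\ref{mainTm2} a separator can induce $K^n$, $C_5$ or $L(K_{3,3})$). So a block is a join of the graphs induced on its separators, not a complete multipartite graph, and ``number of parts'' and ``part size'' are not its only isomorphism invariants. More seriously, your second paragraph purports to prove unconditionally that all blocks are isomorphic, which cannot be right: $Y_\kappa$ with $\kappa\ge 3$ is a genuine $k$-CS-transitive graph (for odd $k$) with empty open blocks and two non-isomorphic block types, so the second alternative of the lemma really does occur. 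Your spoon/fork comparison fails silently exactly there: a block isomorphic to $K^2$ admits no $k$-spoon with its triangle inside it and no $k$-fork with both prongs inside it, so it cannot be compared with a $K^\kappa$-block by these gadgets. Having ``proved'' the first alternative, your third paragraph then tries to decide when the second one arises, which is logically incoherent; and its content (induced $4$-cycles would contradict something, ``tracking which configurations survive'') neither isolates the $Y_\kappa$ structure nor produces the parity obstruction. Indeed, an induced $C_4$ inside a block contradicts nothing: $X_{2,\lambda}(\closure{K^m})$ contains such blocks and is $k$-CS-transitive.

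The paper's route is different and you would need its key steps. Suppose two blocks $X,Y$ are not isomorphic; since all separators lie in one $\Aut(G)$-orbit one may assume $X\cap Y\ne\es$. If $\abs{X\cap Y}\ge 2$ one builds a $k$-spoon (or $k$-fork) with its triangle (or both prongs) in $X$ and another with its triangle (or both prongs) in $Y$, contradicting $k$-CS-transitivity; hence such blocks meet in at most one vertex. As every separator lies in at least two blocks (Lemma~\ref{noLeaves}), separators are singletons, so Lemma~\ref{completerpartite} makes every block complete; any two complete blocks of order at least $3$ are $\Aut(G)$-isomorphic via $k$-spoons, leaving exactly the two block types $K^2$ and $K^\kappa$, i.e.\ $G\isom Y_\kappa$. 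The parity then comes from a concrete obstruction: for even $k$ there is an induced path of length $k-1$ whose two outermost edges lie in $K^2$-blocks and another whose outermost edges lie in $K^\kappa$-blocks, and no automorphism maps one to the other. None of these steps is recoverable from your outline as written.
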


\begin{proof}
Suppose that there are two $\SF$-blocks $X$ and $Y$ that are not isomorphic.
Since there is an automorphism $\alpha$ of $G$ with $X\cap Y^\alpha\ne\es$, we may assume that $X\cap Y\ne\emptyset$.
If $X\cap Y$ contains two distinct vertices, there is either a $k$-fork with both prongs in $X$ and one $k$-fork with both prongs in $Y$, or there is a $k$-spoon with its triangle---the subgraph isomorphic to a $K^3$---in $X$ and one $k$-spoon with its triangle in $Y$.
This contradicts the $k$-CS-transitivity of $G$.
Thus the $\SF$-blocks intersect in at most one vertex, and every $\SF$-block is complete by Lemma \ref{completerpartite}.
If there are two $\SF$-blocks with more than $2$ vertices each, then for both these $\SF$-blocks there is a $k$-spoon poking it, which implies that they are $\Aut(G)$-isomorphic.
As every $\SF$-block contains an edge we know that there are precisely two different kinds of $\SF$-blocks:
one $\SF$-block is isomorphic to a $K^2$ and another one is isomorphic to a $K^\kappa$ for some $\kappa\ge 3$.
This yields to the $Y_\kappa$.
No $Y_\kappa$ with $\kappa\geq 3$ is $k$-CS-transitive for even $k$ since there is a path of length $k-1$ with both outermost edges in $\SF$-blocks isomorphic to a $K^2$ and there is a path of length $k-1$ with both outermost edges in $\SF$-blocks isomorphic to a $K^\kappa$ with $\kappa\ge 3$.
There is no automorphism of $G$ mapping the first onto the second path.
This completes the proof.
\end{proof}

\begin{Lem}\label{isomorphicBlocks}
Let $k\ge 3$, let $G$ be a connected $k$-CS-transitive graph with at least two ends, and let $\SF$ be a minimal cut system such that the structure tree of $G$ and $\SF$ is basic and every open $\SF$-block is empty.
If any two $\SF$-blocks are isomorphic, then there are precisely two $\SF$-separators per $\SF$-block and every $\SF$-separator lies in precisely two $\SF$-blocks, or there are cardinals $\kappa,\lambda\ge 3$ and integers $2\le m\le\frac{k}{3}$ and $2\le n\le\frac{k}{2}$ such that $G\isom X_{2,\lambda}(\closure{K^m})$ or $G\isom X_{\kappa, 2}(K^n)$ or $G\isom X_{\kappa,\lambda}(K^1)$.
\end{Lem}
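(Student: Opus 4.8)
The plan is to analyze the local structure around a single $\SF$-block $X$ under the hypothesis that every open $\SF$-block is empty and all $\SF$-blocks are isomorphic, and to show that either the "generic" combinatorial pattern (two $\SF$-separators per block, two blocks per separator) holds, or one of the three exceptional product graphs $X_{2,\lambda}(\closure{K^m})$, $X_{\kappa,2}(K^n)$, $X_{\kappa,\lambda}(K^1)$ occurs. First I would fix $\SF$ and a block $X$, and recall from Lemma~\ref{completerpartite} that any two distinct $\SF$-separators in $X$ are disjoint and that $ss'\in E(G)$ whenever $s\in S$, $s'\in S'\sm S$ for distinct $\SF$-separators $S,S'$ in $X$. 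Since every vertex of $X$ lies in some $\SF$-separator (the open block is empty), $V(X)$ is partitioned into the $\SF$-separators it contains, and the cross-edges between different separators are all present; so the structure of $X$ is determined by how many separators it contains, their common order $r$, and which edges run \emph{inside} a single separator. The homogeneity of separators (they lie in one $\Aut(G)$-orbit) forces all $\SF$-separators to induce the same graph $S\isom H$ on $r$ vertices.

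Next I would pin down $H$ and the number of separators per block. If $X$ contains at least three $\SF$-separators, then $X$ is a complete multipartite-like graph built from copies of $H$; using $k$-CS-transitivity on small connected configurations (spoons and forks, as defined just before the subsection, and the same style of argument as in Lemma~\ref{Y_kappa}) one shows $H$ must be either $\closure{K^r}$ (no edges inside a separator) or $K^r$ (separator complete) — anything else produces two connected $k$-vertex subgraphs, one with a certain induced substructure inside a separator and one without, that cannot be matched by an automorphism. In the $H=\closure{K^r}$ case with $r=m\ge 2$, a block with $\ge 3$ separators together with the tree structure on blocks/separators gives $G\isom X_{2,\lambda}(\closure{K^m})$ (here each separator lies in exactly two blocks, forced by a spoon/fork argument, while blocks carry $\lambda\ge 3$ separators), and one reads off $m\le k/3$ from the requirement that a $k$-spoon or $k$-fork poking the relevant separator embeds — three "layers" of $m$ vertices fit in $k$. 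In the $H=K^r$ case with $r=n\ge 2$ one symmetrically gets $G\isom X_{\kappa,2}(K^n)$ with $n\le k/2$. If $r=1$, i.e.\ every $\SF$-separator is a single vertex, then blocks are complete graphs meeting in single cut-vertices and $G\isom X_{\kappa,\lambda}(K^1)$ with $\kappa,\lambda$ arbitrary $\ge 3$ (the case of exactly two separators per block or two blocks per separator being explicitly allowed in the first alternative of the statement). Throughout, the bounds $m\le k/3$, $n\le k/2$, and $\kappa,\lambda\ge 3$ in the exceptional cases are exactly what is needed to make the relevant spoons/forks fail to be "generic", and conversely what makes these graphs $k$-CS-transitive was already verified in Section~\ref{reverseDirection}.

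If, on the other hand, every $\SF$-block contains at most two $\SF$-separators, I claim it contains exactly two (a block with only one separator would be a leaf of the basic structure tree, contradicting Lemma~\ref{noLeaves}), and likewise every $\SF$-separator lies in exactly two $\SF$-blocks: a separator lying in $\ge 3$ blocks, since all blocks are isomorphic and meet it in the same way, yields (via a $k$-fork or $k$-spoon poking that separator versus one poking an "interior" separator with only two incident blocks) a violation of $k$-CS-transitivity, exactly as in the leaf argument; and $\Aut(G)$-invariance of $\SF$ plus transitivity on separators then forces a uniform count. This is precisely the first alternative of the statement. The main obstacle I anticipate is the careful bookkeeping in the spoon/fork arguments: one must check in each sub-case that a connected $k$-vertex induced subgraph realizing the "wrong" configuration actually exists (which is where the numerical bounds on $m$, $n$, and the diameter constraints enter) and that it is genuinely non-isomorphic as an abstract graph to the "right" one or, if isomorphic, is separated from it by the $\Aut(G)$-invariant separator count — reusing the device from Lemmas~\ref{Y_kappa} and~\ref{SepDisj} that two induced paths/subgraphs whose far ends are cut apart by different numbers of $\SF$-separators cannot lie in one $\Aut(G)$-orbit (Lemma~\ref{FewCuts}).
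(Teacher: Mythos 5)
Your case analysis is organized around the wrong invariant, and as a result it both misplaces the exceptional graphs and ``proves'' something false. In the paper's notation a block of $X_{2,\lambda}(H)$ contains exactly \emph{two} $\SF$-separators (the blocks of $X_{2,\lambda}$ are $K^2$'s), while each separator lies in $\lambda$ blocks. So your sub-case ``a block with $\ge 3$ separators and $H\isom\closure{K^m}$, $m\ge 2$, gives $G\isom X_{2,\lambda}(\closure{K^m})$'' is a contradiction in terms: that configuration is not one of the listed graphs and has to be shown \emph{impossible} -- which is exactly what the paper's two-kinds-of-$k$-forks argument does (a fork poking a single separator versus one poking two separators cannot both occur). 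Worse, the claim in your last paragraph that ``at most two separators per block'' forces ``exactly two blocks per separator'' is false: the $\lambda$-regular tree $X_{2,\lambda}(K^1)$ with $\lambda\ge 3$ satisfies every hypothesis of the lemma, has two separators per block and $\lambda\ge 3$ blocks per separator, and is certainly $k$-CS-transitive; your proposed spoon/fork contradiction for a separator in $\ge 3$ blocks cannot materialize there. The point you are missing is that whether two competing forks (or spoons) really yield a violation of $k$-CS-transitivity depends on whether \emph{both} extend to induced connected subgraphs on $k$ vertices, and the handle of a fork whose prongs lie in a single separator, or in two separators, extends only under specific conditions on the number of separators per block, the number of blocks per separator, and the edges inside $H$. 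This is precisely the bookkeeping the lemma turns on, and your sketch never does it.

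The paper's route is different and avoids your dichotomy: it starts from the fact (already available from Lemma~\ref{completerpartite} and the emptiness of the open blocks) that $G\isom X_{\kappa,\lambda}(H)$, assumes one of the two parameters exceeds $2$, and then applies two \emph{independent} obstructions -- two non-automorphic $k$-spoons when $H$ has an edge and the relevant parameter is $\ge 3$, and two non-automorphic $k$-forks when $H$ has a non-adjacent pair and the other parameter is $\ge 3$. Together these force $H$ to be complete or edgeless and pin the corresponding parameter to $2$, which is how the three exceptional families arise; the bounds $m\le\frac{k}{3}$ and $n\le\frac{k}{2}$ are then obtained from explicit pairs of isomorphic but non-automorphic induced subgraphs on $k$ vertices spread over three, respectively two, separators, which you only gesture at. (You should also be aware that the paper itself is not internally consistent here: the placement of the index $2$ in $X_{2,\lambda}(\closure{K^m})$ versus $X_{\kappa,2}(\closure{K^m})$ in this lemma disagrees with Theorem~\ref{mainTm2}, so extra care with which parameter counts separators per block is exactly where a proof of this statement can go wrong -- and where yours does.)
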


\begin{proof}
We already know that $G\isom X_{\kappa,\lambda}(H)$ for some finite graph $H$ and we may assume that $\kappa>2$ or $\lambda>2$.
If there are edges in $H$ and $\lambda\ge 3$ then there are two kinds of $k$-spoons: one with its triangle meeting $3$ $\SF$-separators and one meeting precisely two $\SF$-separators.
If there are two non-adjacent vertices in $H$ and $\kappa\ge 3$ then there are two kinds of $k$-forks: one pokes in a single separator and one pokes in two different separators.
Thus there is $G\isom X_{2,\lambda}(\closure{K^m})$, $G\isom X_{\kappa, 2}(K^n)$, or $G\isom X_{\kappa,\lambda}(K^1)$ with $m,n\ge 2$.
It remains to show that $m\le\frac{k}{3}$ and $n\le\frac{k}{2}$.
Suppose that $m\ge \frac{k}{3}+1$ and let $S_1,S_2$ be $\SF$-separators in different $\SF$-blocks both adjacent to some $\SF$-separator $S_0$.
Let $A_i\sub S_i$ for $i=0,1,2$ with $|A_0|=|A_1|=\frac{k}{3}+1$ and $|A_2|=\frac{k}{3}-2$.
Let $B_i\sub S_i$ for $i=0,1,2$ with $|B_i|=\frac{k}{3}+1-i$.
Then there is no automorphism of $G$ from $G[\bigcup A_i]$ to $G[\bigcup B_i]$ althogh both induced subgraphs are isomorphic.
Thus there is $m\le\frac{k}{3}$.

Suppose $n\ge\frac{k}{2}+1$.
Let $S_0,S_1$ be two adjacent $\SF$-separators.
Let $A_i\sub S_i$ with $|A_0|=\frac{k}{2}+1$ and $|A_1|=\frac{k}{2}-1$, and let $B_i\sub S_i$ with $|B_i|=\frac{k}{2}$.
Then there is no automorphism of $G$ from the complete graph on $k$ vertices $G[\bigcup A_i]$ to the complete graph on $k$ vertices $G[\bigcup B_i]$.
Thus $n\le\frac{k}{2}$ follows.
\end{proof}

\begin{Lem}\label{twoBlocks}
Let $k\ge 3$, let $G$ be a connected $k$-CS-transitive graph with at least two ends, and let $\SF$ be a minimal cut system such that the structure tree of $G$ and $\SF$ is basic, every open $\SF$-block is empty and all $\SF$-blocks are isomorphic.
If every $\SF$-block contains precisely two $\SF$-separators and every $\SF$-separator is contained in precisely two $\SF$-blocks, then $G\isom X_{2,2}(E)$ with $E\in\EF_{k,m,n}$, $m\le k-2$, and $n\ge \frac{k-|E|}{2}+1$, $E\isom\closure{K^m}$ with $1\leq m\le\frac{k}{3}$, or $E\isom K^n$ with $1\le n\le\frac{k}{2}$.
\end{Lem}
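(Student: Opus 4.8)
The plan is to pin down the structure tree first, read off that $G\isom X_{2,2}(E)$ for a finite graph $E$, and then classify $E$.

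\emph{Step 1: the structure tree is a double ray and $G\isom X_{2,2}(E)$.} Each block vertex of $\TF$ has degree $2$ (two $\SF$-separators per block) and each separator vertex has degree $2$ (two $\SF$-blocks per separator), so every vertex of $\TF$ has degree $2$; being a tree with at least two vertices and, by Lemma~\ref{noLeaves}, no leaves, $\TF$ is a double ray, so after indexing its separator vertices are $(S_i)_{i\in\ganz}$, its block vertices are $(B_i)_{i\in\ganz}$, and $V(B_i)=S_i\cup S_{i+1}$. The set of $\SF$-blocks and $\SF$-separators containing a fixed vertex $v$ is a connected subtree of $\TF$ (if $v$ lies in blocks on both sides of an $\SF$-separator $S$, then $v\in S$); since every open $\SF$-block is empty, $v$ lies in some $S_i$, and since the two separators of each block are disjoint (Lemma~\ref{completerpartite}), $v$ lies in exactly one $S_i$. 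Hence $V(G)=\bigsqcup_{i\in\ganz}S_i$. No edge of $G$ joins $S_i$ to $S_j$ with $|i-j|\ge 2$ (edges stay inside a common block), and Lemma~\ref{completerpartite} makes consecutive $S_i,S_{i+1}$ completely joined. By basicness all the $G[S_i]$ lie in one $\Aut(G)$-orbit; writing $E:=G[S_0]$ we get $G\isom X_{2,2}(E)$.

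\emph{Step 2: the degenerate cases.} If $E\isom K^n$, then splitting a fixed $K^k\sub S_0\cup S_1$ across $S_0,S_1$ in two ways with different multisets of intersection sizes—exactly as in the proof of Lemma~\ref{isomorphicBlocks}—produces two isomorphic induced order-$k$ subgraphs that no automorphism of $G$ can interchange (automorphisms permute the $S_i$), forcing $n\le\tfrac k2$. If $\closure{E}\isom K^m$, the same device applied to a complete bipartite subgraph spanning $S_{-1},S_0,S_1$ gives $m\le\tfrac k3$. In both cases we land in one of the claimed families (with $m$ or $n$ possibly $1$, i.e.\ $X_{2,2}(K^1)$).

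\emph{Step 3: the generic case.} Now assume $E$ is neither complete nor the complement of a complete graph; then $|E|\ge 2$, and it suffices to verify that $E$ satisfies the hypotheses of Corollary~\ref{CharEno}, for then $E\in\EF_{k,m,n}$ by the definition of that class. The only genuinely new ingredient is that $E$ is $l$-S-transitive for every $l\le k-1$: given isomorphic induced subgraphs $Y,Y'\sub S_0$ with $|Y|=l$, enlarge both by a common set $Z$ of $k-l$ vertices spread over the separators $S_{\pm1},S_{\pm2},\dots$ symmetrically about $S_0$ (possible since $|E|\ge 2$), so that $G[Y\cup Z]$ and $G[Y'\cup Z]$ are isomorphic connected induced subgraphs of order $k$; an extending automorphism $\alpha$ exists by $k$-CS-transitivity, and since $\alpha$ permutes the $S_i$ and the placement of $Y\cup Z$ pins down $S_0$, $\alpha$ fixes $S_0$ and hence carries $Y$ onto $Y'$. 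The remaining conditions of Corollary~\ref{CharEno}—maximum degree at most $k-2$, every induced subgraph of $E$ on at most $\tfrac{k-|E|}{2}+1$ vertices connected, and no two non-adjacent vertices of $E$ having $k-2$ common neighbours—are each obtained from a putative violation inside $S_0$: using the complete joins and the $l$-S-transitivity just proved, one realises the bad configuration a second time as part of an order-$k$ connected subgraph of $G$ that occupies a different number of consecutive separators than its first copy in $S_0$, which no automorphism can match. Hence $E\in\EF_{k,m,n}$ with $m\le k-2$ and the stated bound on $n$.

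\emph{Main obstacle.} Step 1 is bookkeeping about the double-ray structure tree and Step 3 reduces cleanly to Corollary~\ref{CharEno}; the real work is the last part of Step 3, namely choosing for each of the three parameter conditions the correct ``badly placed'' order-$k$ subgraph and verifying the exact numerical thresholds $m\le k-2$ and $n\le\tfrac{k-|E|}{2}+1$ (and, in Step 2, $n\le\tfrac k2$ and $m\le\tfrac k3$). The degree bound in particular really uses the hypothesis that $E$ is neither complete nor the complement of a complete graph, and the $l=k/2$ situation in the $l$-S-transitivity argument must be handled by keeping the padding symmetric about $S_0$.
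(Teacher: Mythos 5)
Your route is the paper's own: read off $G\isom X_{2,2}(E)$ from the degree\mbox{-}$2$ structure tree, dispose of complete and edgeless $E$ by the redistribution device of Lemma~\ref{isomorphicBlocks}, and for all other $E$ verify the hypotheses of Corollary~\ref{CharEno}. Steps 1 and 2 are fine, and your $l$-S-transitivity argument is a workable variant of the paper's (which pads with a one-sided path through a common neighbour rather than a symmetric spread); the parity issues you flag there are real but manageable.

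The genuine gap is the connectivity condition. Your uniform template --- realise the bad configuration a second time inside a connected order-$k$ subgraph that ``occupies a different number of consecutive separators'' --- does deliver the degree bound (a vertex $v\in S_0$ with $k-1$ neighbours in $S_0$ versus the same $k-1$ neighbours attached instead to a vertex of $S_1$) and the common-neighbour bound ($G[N+x+y]$ inside one separator versus $G[N+s+s'']$ spanning three). But it does not apply to a disconnected $X\sub S_0$ of order $\frac{k-|E|}{2}+1$: such an $X$ cannot lie in a connected order-$k$ subgraph without being bridged through neighbouring separators, and the two configurations one must compare generally meet the \emph{same} number of separators --- what differs is how the vertices are distributed among them. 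The paper's actual argument is a different construction: place copies of $X$ (with components $C,D$ and $C'',D''$) in two separators $S,S''$ flanking the whole of an intermediate separator $S'$, giving about $2\bigl(\frac{k-|E|}{2}+1\bigr)+|E|=k+2$ vertices, and then delete two vertices in two matched ways, $\set{c,d}$ on one side versus $\set{c'',d}$ split across both sides, with $c\in C$, $c''\in C''$ chosen so that $C-c\isom C''-c''$; this yields isomorphic connected order-$k$ subgraphs whose traces on the separator sequence no automorphism can match, and it is exactly where the threshold $\frac{k-|E|}{2}+1$ comes from. That matched-deletion idea is absent from your sketch, so as written Step~3 does not close case (c).
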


\begin{proof}
As the open $\SF$-blocks are empty it is obvious that $G\isom X_{2,2}(E)$ for some finite graph $E$.
If $E$ is a complete graph or the complement of a complete graph, then the same proof as in Lemma \ref{isomorphicBlocks} shows that $1\le m\le\frac{k}{3}$ if $E\isom \closure{K^m}$ or $1\leq n\le\frac{k}{3}$ if $E\isom K^n$.
By Theorem \ref{CharEno} it suffices to show that (a) $E$ is $l$-S-transitive for all $l\le k-1$, (b) $\Delta(E)\le k-2$, (c) any vertex set of order $\frac{k-|E|}{2}+1$ in $E$ is connected, and (d) no two non-adjacent vertices of $E$ have $k-2$ common neighbours.
\begin{enumerate}[(a)]
\item Let $A, B\sub S$ be isomorphic graphs with at most $k-1$ vertices for some $\SF$-separator $S$ ($\isom E$).
Then there is a common neighbour $v$ of these vertices in an adjacent $\SF$-separator.
Adding a path of suitable length that starts in $v$ and that is only in $v$ adjacent to $S$, one gets two connected subgraphs of order $k$ and an automorphism of $G$ mapping one to the other.
If $\abs{A}\neq 1$, then this automorphism must map the vertices of $A$ onto vertices of $S$ and hence onto $B$.
If $\abs{A}=1$, let $S'$ be an $\SF$-separator such that some induced path of length $k-1$ starting in $A$ ends in $S'$.
Let $\varphi,\varphi'$ be the isomorphisms from $E$ to $S,S'$, respectively.
Let $A'\sub S'$ be $(A^{\varphi\inv})^{\varphi'}$.
Then we may assume that the path ends in $A'$.
Thus there has to be an automorphism of $G$ mapping $A$ to $B$ or $A'$ to $B$.
\item Let $S$ ($\isom E$) be an $\SF$-separator.
Suppose there is a vertex $v$ of degree at least $k-1$ in $G[S]$.
Let $A\sub S$ contain $v$ and $k-1$ of its neigbours.
Let $w$ be some vertex from an $\SF$-separator that is adjacent to $S$.
Then $G[A-v+w]$ is isomorphic to $G[A]$ but there is no automorphism of $G$ mapping the one onto the other.
Thus no vertex in $S$ has degree $k-1$ or greater.
\item Finally, suppose there is a vertex set $X\sub V(E)$ of order at least $\frac{k-|E|}{2}+1$ that is not connected.
Let $S,S',S''$ be three distinct $\SF$-separators such that $S'$ is adjacent to the other two.
Let $A\sub S,A''\sub S''$ be copies of $X$ in $S$ and $S''$ that contain the components $C,D\sub A$ and $C'',D''\sub A''$, respectively.
Let $c\in C$ and $c''\in C''$ be two vertices such that $C-c$ and $C''-c''$ are isomorphic.
Let $d\in D$ be any vertex.
Then the graphs $G[A\cup S'\cup A'']-\{c,d\}$ and $G[A\cup S'\cup A'']-\{c'',d\}$ are isomorphic but there is no $G$-automorphism mapping one to the other.
There are $k$ or $k+1$ vertices in these subgraphs, depending on the parity of $\abs{E}$.
Since the argument stays valid even if we ignore one of the vertices in $S'$  there is such a CS-transitivity contradicting graph with precisely $k$ vertices.
\item Suppose that there are two non-adjacent vertices $x,y$ in some $\SF$-separator $S'$~($\isom E$) with $k-2$ common neighbours and let $N\sub S'$ be $k-2$ of these neighbours.
Let $S, S''$ be distinct $\SF$-separators adjacent to $S'$ and let $s\in S$ and $s''\in S''$.
Then $G[N+x+y]$ and $G[N+s+s'']$ are isomorphic but there is no automorphism of $G$ mapping one onto the other.\qedhere
\end{enumerate}
\end{proof}

By Lemma \ref{Y_kappa}, \ref{isomorphicBlocks}, and \ref{twoBlocks} we may finish the first case.

\begin{Tm}\label{emptyOpenBlocks}
Let $k\ge 3$, let $G$ be a connected $k$-CS-transitive graph with at least two ends, and let $\SF$ be a minimal cut system of $G$ such that the structure tree of $G$ and $\SF$ is basic and every open block is empty.
Then there are cardinals $\kappa,\lambda\ge 2$ and integers $m,n$ such that $G$ is isomorphic to one of the following graphs:

\begin{enumerate}[(1)]
\item $X_{\kappa,\lambda}(K^1)$,
\item $X_{\kappa,2}(K^n)$ with $n\le\frac{k}{2}$,
\item $X_{2,\lambda}(\closure{K^m})$ with $m\le\frac{k}{3}$,
\item $X_{2,2}(E)$ with $E\in\EF_{k,m,n}$, $m\le k-2$ and $n\le \frac{k-|E|}{2}+1$,
\item $Y_\kappa$ (if $k$ is odd).\qed
\end{enumerate}
\end{Tm}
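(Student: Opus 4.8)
The statement is essentially the conjunction of Lemmas~\ref{Y_kappa}, \ref{isomorphicBlocks} and \ref{twoBlocks}, so the plan is simply to chase the case distinctions in those three lemmas and to check that the union of their conclusions is exactly the list given. The converse direction — that each graph in the list is in fact $k$-CS-transitive for the stated values of $k$ — has already been settled in Section~\ref{reverseDirection} (Lemmas~\ref{rdcomplete}, \ref{rddiam2}, \ref{rddiam3}), so only the forward implication is at issue here.

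First I would invoke Lemma~\ref{Y_kappa}: since every open $\SF$-block is empty, either all $\SF$-blocks are pairwise isomorphic, or $k$ is odd and $G\isom Y_\kappa$ for some cardinal $\kappa\ge 2$. The latter is case~(5), so from now on I assume all $\SF$-blocks isomorphic. Next I would apply Lemma~\ref{isomorphicBlocks}, which gives two possibilities: either $G$ is isomorphic to one of $X_{2,\lambda}(\closure{K^m})$ with $m\le\frac{k}{3}$, $X_{\kappa,2}(K^n)$ with $n\le\frac{k}{2}$, or $X_{\kappa,\lambda}(K^1)$ — these are cases~(3),~(2) and~(1) — or else every $\SF$-block contains precisely two $\SF$-separators and every $\SF$-separator lies in precisely two $\SF$-blocks. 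In this remaining subcase Lemma~\ref{twoBlocks} applies and yields $G\isom X_{2,2}(E)$, where $E$ either lies in $\EF_{k,m,n}$ with $m\le k-2$ and $n\le\frac{k-|E|}{2}+1$ (case~(4)), or $E\isom\closure{K^m}$ with $m\le\frac{k}{3}$, or $E\isom K^n$ with $n\le\frac{k}{2}$.

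To conclude I would observe that the last two outcomes of Lemma~\ref{twoBlocks} are already on the list, since $X_{2,2}(\closure{K^m})$ is the graph $X_{2,\lambda}(\closure{K^m})$ with $\lambda=2$ (case~(3)), $X_{2,2}(K^n)$ is $X_{\kappa,2}(K^n)$ with $\kappa=2$ (case~(2)), and the degenerate subcase $E\isom K^1$ is $X_{\kappa,\lambda}(K^1)$ with $\kappa=\lambda=2$ (case~(1)); putting the partial cardinal ranges coming from Lemma~\ref{isomorphicBlocks} together with those coming from the two-block subcase produces the full range of arbitrary cardinals $\ge 2$ allowed in cases~(1)--(3) of the theorem. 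Thus $G$ is isomorphic to one of the five listed graphs. The only real work is this bookkeeping — reconciling the integer bounds $m\le\frac{k}{3}$, $n\le\frac{k}{2}$, $m\le k-2$, $n\le\frac{k-|E|}{2}+1$ and the cardinal constraints across the three lemmas, and noting that the complete and edgeless exceptional blocks $E$ excluded from $\EF_{k,m,n}$ by definition are precisely the ones absorbed into cases~(1)--(3). There is no genuine obstacle, as all of the combinatorial content has been isolated in the three preceding lemmas.
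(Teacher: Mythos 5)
Your proposal is correct and matches the paper exactly: the paper proves Theorem~\ref{emptyOpenBlocks} precisely by combining Lemmas~\ref{Y_kappa}, \ref{isomorphicBlocks} and \ref{twoBlocks} and absorbing the degenerate complete/edgeless outcomes of Lemma~\ref{twoBlocks} into cases (1)--(3). Your bookkeeping of the case distinctions and bounds is the same as the paper's intended argument.
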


\subsection{A non-empty open block}

Let us discuss the connected $k$-CS-transitive graphs with at least two ends for $k\ge 3$ such that no orbit of any smallest $\SF$-separator, that separates ends, covers the whole graph.
In other words, there is a non-empty open $\SF$-block.
As mentioned before this case restricts $k$ to be even.

\begin{Lem}\label{openBlocksEven}
Let $k\ge 3$ and $G$ be a connected $k$-CS-transitive graph with at least two ends and let $\SF$ be a minimal cut system such that the structure tree of $G$ and $\SF$ is basic. If some open $\SF$-block is non-empty, then $k$ is even.
\end{Lem}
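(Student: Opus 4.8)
Assume for contradiction that $k$ is odd while some open $\SF$-block $\open X$ is non-empty. Let
$$U:=\bigcup\set{A\cap B\mid (A,B)\in\SF}$$
be the set of vertices lying in some $\SF$-separator. Since $\SF$ is $\Aut(G)$-invariant, so is $U$; since every vertex of $G$ lies in some $\SF$-block, the assumption $\open X\ne\es$ says precisely that $U\ne V(G)$; and $U\ne\es$ because $G$ has more than one end. Fix a vertex $v\in\open X\sub V(G)\sm U$; note that $v$ then lies in no $\SF$-block other than $X$.

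The plan is to produce two induced paths on $k$ vertices in $G$, one with both endpoints in $U$ and one with both endpoints in $V(G)\sm U$. This is impossible: any two induced paths on $k$ vertices are isomorphic, so $k$-CS-transitivity yields an automorphism of $G$ carrying one onto the other; since the only isomorphisms of a path are the identity and the reversal, such an automorphism maps the pair of endpoints of the one path onto the pair of endpoints of the other; and as $U$ is $\Aut(G)$-invariant, the number of endpoints lying in $U$ is then the same for \emph{every} induced path on $k$ vertices, so it cannot be $2$ for one such path and $0$ for another.

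To get both paths it suffices to build a single induced path $v=r_0,r_1,\dots,r_k$ of length $k$ in $G$ that strictly alternates between $V(G)\sm U$ and $U$, that is, with $r_i\in U$ if and only if $i$ is odd. Indeed, since $k$ is odd the indices $0$ and $k-1$ are both even while $1$ and $k$ are both odd, so $r_0\dots r_{k-1}$ is an induced path on $k$ vertices with both endpoints outside $U$, and $r_1\dots r_k$ is one with both endpoints in $U$. Such an alternating path is built step by step, following a ray of the (leafless, by Lemma~\ref{noLeaves}) basic structure tree of $G$ and $\SF$ out of $X$, using $G$ to have infinite diameter (Lemma~\ref{infDiam}) so that the construction does not terminate early: from a vertex in an open $\SF$-block one passes to an adjacent vertex of an $\SF$-separator contained in that block, and from a vertex of an $\SF$-separator $W$ one passes to a vertex in the open part of a further $\SF$-block containing $W$. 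One checks that the resulting path can be taken induced (it meets each $\SF$-block in a geodesic and each $\SF$-separator in at most one vertex, cf.\ the remarks preceding Lemma~\ref{SepDisj}), and by construction it alternates as required.

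The crux — and the only point at which $k$-CS-transitivity, rather than the bare structure-tree formalism, is used — is to justify the two moves, i.e.\ to establish the local structure of $\SF$-blocks in a $k$-CS-transitive graph: that every vertex of an open $\SF$-block is adjacent to each $\SF$-separator contained in that block, that distinct $\SF$-separators inside a common $\SF$-block are disjoint and pairwise non-adjacent, and that the $\SF$-blocks containing a given $\SF$-separator again have non-empty open part. These are exactly the kind of statements that the $k$-spoon and $k$-fork arguments produce (compare Lemmas~\ref{SepDisj} and~\ref{completerpartite}): a failed move would yield two isomorphic induced subgraphs of order at most $k$ that cannot be matched by an automorphism, because their distinguished vertices would be separated properly by different numbers of $\SF$-separators (Lemma~\ref{FewCuts}), contradicting $k$-CS-transitivity. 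Once these local facts are secured, the parity bookkeeping above finishes the proof; I expect this local analysis of $\SF$-blocks to be the main obstacle.
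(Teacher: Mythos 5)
There is a genuine gap, and you have located it yourself: the entire content of the lemma is hidden in the construction of the alternating induced path, and the ``local facts'' you invoke to justify the two moves are not available at this point. Concretely, the statements you need --- that every vertex of a non-empty open block is adjacent to the separators of that block, that distinct separators meeting a common block are disjoint and non-adjacent, and that every block containing a given separator again has a non-empty open part --- are proved in the paper only in the lemmas \emph{following} Lemma~\ref{openBlocksEven}, and their proofs explicitly use that $k$ is even (they speak of ``the innermost ($k$ is even) edge'' of an induced path of length $k-1$). Lemma~\ref{completerpartite} does not help either, since it assumes an \emph{empty} open block, and Lemma~\ref{SepDisj} has as a hypothesis precisely one of the adjacency statements you would need to verify. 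So as written your argument is circular, and the endpoint-parity contradiction --- which is in itself correct: an automorphism carrying one induced $k$-vertex path onto another maps the endpoint pair onto the endpoint pair and preserves the $\Aut(G)$-invariant set $U$ --- never gets off the ground.

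The paper's proof avoids all of this by exploiting the \emph{middle} vertex rather than the endpoints: for odd $k$ an induced path on $k$ vertices has a well-defined middle vertex, fixed by the reversal, so the property ``the middle vertex lies in an $\SF$-separator'' transfers along \emph{any} isomorphism between $k$-vertex induced paths. One such path with its middle vertex in a separator clearly exists; sliding it along an induced ray shows that $k$ consecutive vertices of the ray all lie in separators; and since by Lemma~\ref{infDiam} every vertex starts an induced path on $k$ vertices, mapping such a path onto this window forces every vertex of $G$ into a separator, i.e.\ every open block is empty --- no a priori knowledge of the block structure is needed. If you want to rescue your endpoint version, you would have to build the alternating path using only Lemmas~\ref{noLeaves}, \ref{infDiam} and~\ref{FewCuts}, which seems to require essentially reproving the post-\ref{openBlocksEven} structure lemmas first.
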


\begin{proof}
Recall that there is an induced ray and a path with $k$ vertices whose middle vertex lies in some $\SF$-separator if $k$ is odd. We may map the path anywhere into the ray and thus know that there are $k$ succeeding vertices on the ray that belong to an $\SF$-separator. As the diameter is infinite in every vertex starts an induced path of length  $k-1$ an thus every vertex lies on a path all whose vertices lie in some $\SF$-separator. Thus, if $k$ is odd, then every vertex lies in some $\SF$-separator.
\end{proof}

\begin{Lem}
Let $k\ge 3$ and let $G$ be a connected $k$-CS-transitive graph with at least two ends and let $\SF$ be a minimal cut system such that the structure tree of $G$ and $\SF$ is basic and some open $\SF$-block is not empty. If vertices $s,s'\in G$ belong to different $\SF$-separators then $s$ and $s'$ are not adjacent.
\end{Lem}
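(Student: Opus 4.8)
The plan is to argue by contradiction. Suppose there are distinct $\SF$-separators $S\ne S'$ and an edge $ss'\in E(G)$ with $s\in S$ and $s'\in S'$; by Lemma~\ref{openBlocksEven} the integer $k$ is even. Two preliminary observations drive the argument. First, since $E(G[A])\cup E(G[B])=E(G)$ for every separation $(A,B)\in\SF$, the edge $ss'$ lies wholly on one side of each separation; hence no $\SF$-separator separates $s$ and $s'$ \emph{properly}, and, using nestedness of $\SF$ together with the structure tree (Theorem~\ref{VertexCuts}), one checks that any $\SF$-separator lying in the structure tree strictly between $S$ and $S'$ must already contain $s$ or $s'$. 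A short analysis of this then lets me reduce to the case that $S$ and $S'$ lie together in a single $\SF$-block~$X$, joined on any relevant path by the edge $ss'$ with nothing in between. Secondly, because $G$ is $k$-CS-transitive, $\Aut(G)$ carries any induced path on $k$ vertices of $G$ onto any other; since $\SF$ is $\Aut(G)$-invariant, there is a fixed finite number $N$ (finite by Lemma~\ref{FewCuts}) such that the two endpoints of \emph{every} induced path on $k$ vertices are separated properly by exactly $N$ distinct $\SF$-separators.

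The contradiction then comes from producing an induced path on $k$ vertices whose endpoints are separated properly by a number of $\SF$-separators different from $N$. Using that $G$ has infinite diameter (Lemma~\ref{infDiam}) and that every vertex starts an induced ray following the structure tree, I would extend the edge $ss'$ to an induced path $P$ on $k$ vertices whose two halves run, respectively, into the wing of the $S$-cut pointing away from $S'$ and into the wing of the $S'$-cut pointing away from $S$; then both $S$ and $S'$ separate the endpoints of $P$ properly, and they do so consecutively, being joined on $P$ by the single edge $ss'$. On the other hand, the hypothesis that some open $\SF$-block is non-empty forces the reference value $N$ to be realised along paths on which any two $\SF$-separators that separate the endpoints properly are kept apart by at least one vertex lying in no separator. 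Computing $N$ along such a reference path — built through a vertex of a non-empty open block, again invoking infinite diameter — and comparing it with the separator count along $P$, the two numbers come out unequal. Since $k$-CS-transitivity supplies an automorphism carrying $P$ to the reference path, and automorphisms preserve the number of $\SF$-separators separating a pair of vertices properly, this is the desired contradiction.

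I expect the main obstacle to be exactly this counting comparison: one must arrange $P$ and the reference path so that their separator counts provably differ, which means (i) routing the reference path through the open part of a block so that its separators are genuinely spread out, (ii) controlling the tails of $P$ so that the ``double'' crossing contributed by the edge $ss'$ is not compensated elsewhere, and (iii) keeping both paths induced and of length exactly $k-1$. The structure-tree reduction in the first step — and, should the non-empty open block not be the block $X$ produced there, transporting an open-block vertex into $X$ by an automorphism, using that all $\SF$-separators lie in one $\Aut(G)$-orbit — is more routine but still relies on the machinery of Section~\ref{StructureTree}. Everything else (existence of induced rays along the structure tree, infinite diameter, and $\Aut(G)$-invariance of $\SF$) is already available from the preceding results.
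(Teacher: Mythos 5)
There is a genuine gap at exactly the point you flag yourself: the counting comparison. Your whole contradiction rests on the claim that the number of $\SF$-separators properly separating the endpoints of your path $P$ (with the ``double crossing'' at $ss'$) differs from the reference value $N$ computed along a path routed through an open block, but you never establish this inequality. It is not at all clear why two separators being crossed ``consecutively'' along $P$ should change the \emph{total} count of separators properly separating the endpoints of an induced path on $k$ vertices --- that count is governed by how many separators the path crosses in total, not by how they are spaced, and nothing in your construction controls the totals on the two sides of the comparison. (There is also a secondary unverified assertion: that $S$ and $S'$ both separate the endpoints of $P$ \emph{properly}, which in this paper requires each endpoint's component to be adjacent to every vertex of the separator.) As it stands, the proof does not close.

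The paper's argument avoids counting altogether and exploits the parity of $k$ directly: since $k$ is even, an induced path on $k$ vertices has a unique innermost edge, and any isomorphism between two such paths must match up their innermost edges. Extending $ss'$ to an induced path $P$ on $k$ vertices with innermost edge $ss'$ and mapping $P$ onto successive length-$(k-1)$ segments of an induced ray, one concludes (by $\Aut(G)$-invariance of $\SF$) that every edge in a tail of that ray has both endpoints in $\SF$-separators. Since every vertex of $G$ starts an induced path of length $k-1$ (the graph has infinite diameter), every vertex can then be mapped onto that ray and hence lies in an $\SF$-separator, contradicting the hypothesis that some open $\SF$-block is non-empty. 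You would do better to replace your counting scheme with this propagation-along-a-ray argument; the preliminary structure-tree reduction in your first step then becomes unnecessary.
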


\begin{proof}
Suppose $s$ and $s'$ are adjacent. Then there is some induced path $P$ of length $k-1$ in $G$ such that the innermost ($k$ is even) edge is $ss'$. Mapping this path with the edge $ss'$ to successive edges on an induced ray, we obtain a ray all whose edges have end vertices only in $\SF$-separators. 
This is a contradiction since in every vertex starts an induced path of length $k-1$ and at least one vertex lies in an open $\SF$-block.
\end{proof}

\begin{Lem}
Let $k\ge 3$ and let $G$ be a connected $k$-CS-transitive graph with at least two ends and let $\SF$ be a minimal cut system such that the structure tree of $G$ and $\SF$ is basic and some open $\SF$-block is not empty. If vertices $s,x\in G$ lie in the same $\SF$-block with $s$ in some $\SF$-separator and $x$ not, then $s$ and $x$ are adjacent.
Furthermore, any two distinct $\SF$-separators are disjoint.
\end{Lem}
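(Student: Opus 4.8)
The plan is to deduce both assertions from the $k$-CS-transitivity of $G$, using throughout that $\SF$ is $\Aut(G)$-invariant---so every automorphism maps the set $\bigcup\WF$ of $\SF$-separator vertices onto itself, hence also the complementary set $V(G)\setminus\bigcup\WF$ of non-separator vertices onto itself---and that, by the previous lemma, no vertex of an $\SF$-separator is adjacent to a vertex of a different $\SF$-separator. In addition I will use, as in the preceding proofs, that $G$ has infinite diameter, that an induced path of length $k-1$ starts at every vertex, that the basic structure tree has no leaves (Lemma~\ref{noLeaves}), and consequently that along any ray of the structure tree one may route $|S|$ pairwise disjoint induced rays of $G$; in particular every vertex of an $\SF$-separator lies on such an induced ray.

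For the first assertion I suppose, for a contradiction, that $s\in S\subseteq B$ and $x\in\open B$ with $sx\notin E(G)$, where $S$ is an $\SF$-separator and $B$ an $\SF$-block. First I record that every neighbour of $x$ lies in $B$, since $x$ lies in no $\SF$-separator and is therefore cut off from $V(G)\setminus B$ by the $\SF$-separators contained in $B$. Walking inside the connected block $B$ from $x$ towards $S$, and using that two consecutive vertices of a path cannot lie in two distinct $\SF$-separators, one obtains a vertex of $\open B$ adjacent to a vertex of $S$; comparing a connected induced subgraph of order $k$ built on this edge with one built at $s$---which no automorphism can realise, as it would have to send a non-separator vertex to a separator vertex---forces $s$ itself to have a neighbour in $\open B$, and the connectivity of $B$ then lets me pass to the situation where $s$ and $x$ have a common neighbour $w$. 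Now $s,w,x$ is an induced path; since an induced path of length $k-1$ issues from $w$, a suitable choice yields an induced path of length $k-3$ from $w$ whose vertices other than $w$ avoid $N(s)\cup N(x)$, so that $\{s,x,w\}$ together with this path is a $k$-fork with one prong $s$ in the separator $S$ and the other prong $x$ in $\open B$. Since every $k$-fork of $G$ is isomorphic, as a graph, to this one, and (for $k$ away from the smallest values, which are treated separately) any such isomorphism carries prongs to prongs, it suffices to exhibit in $G$ another $k$-fork whose prongs are both separator vertices or both non-separator vertices: routing an induced ray through two vertices of distinct $\SF$-separators, respectively adapting the ray inside $B$ so that it meets two vertices of $\open B$, produces such a fork. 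As automorphisms preserve the set of separator vertices, no automorphism maps the first $k$-fork onto this one, contradicting the $k$-CS-transitivity of $G$.

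For the furthermore, I would first observe that every open $\SF$-block is non-empty: a block $X$ with $\open X=\emptyset$ consists solely of $\SF$-separator vertices, and since vertices lying in distinct $\SF$-separators are never adjacent, $X$ then reduces (using the nestedness of $\SF$) to a single $\SF$-separator and is therefore a leaf of the structure tree, contradicting Lemma~\ref{noLeaves}. The first assertion then gives that, for every $\SF$-separator $S'$ and every $\SF$-block $X$ with $S'\subseteq X$, each vertex of $S'$ is adjacent to every vertex of $\open X\subseteq X\setminus S'$; thus the hypothesis of Lemma~\ref{SepDisj} holds for every $\SF$-separator, and the disjointness of any two distinct $\SF$-separators follows. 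The step I expect to be the main obstacle is the middle of the second paragraph: securing the common neighbour $w$ of $s$ and $x$ and, above all, the long induced handle of the $k$-fork avoiding $N(s)\cup N(x)$, which requires a careful choice of the routed ray together with a short case analysis of how $x$ and $S$ lie inside $B$; handling the degenerate small-$k$ forks---where a $k$-fork is a star and its prongs are not isomorphism-invariant---will need a separate, more hands-on argument, perhaps via $k$-spoons.
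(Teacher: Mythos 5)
Your argument for the first assertion takes a genuinely different route from the paper's. The paper constructs a single reference path of arbitrary length in which every other vertex lies in an $\SF$-separator and any two vertices at distance at least $3$ lie in different $\SF$-blocks; it then extends an induced $s$--$x$ path to an induced path of length $k-1$, maps it into the reference path by $k$-CS-transitivity, and reads off $d(s,x)\le 1$ directly from the combinatorics of that reference path. No forks, no common neighbour, and no case distinction on small $k$ are needed there.

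Your fork-based plan has two gaps that are not routine. First, the reduction to the case where the non-adjacent pair $s,x$ has a common neighbour $w$ is only asserted (``the connectivity of $B$ then lets me pass to\dots''): a shortest $s$--$x$ walk in the block need not produce a non-adjacent pair at distance $2$ consisting of one separator vertex and one open-block vertex, since the intermediate vertices may all lie in $\open{B}$ or all in separators. Second, and more seriously, the comparison fork with \emph{uniform} prongs need not exist in the counterfactual situation you are arguing in. Two vertices of distinct $\SF$-separators on an induced ray are non-adjacent by the preceding lemma, but nothing guarantees they occur at distance exactly $2$ on that ray, i.e.\ that they share a neighbour; if every induced ray must spend two or more steps inside each open block --- which is precisely what you cannot yet exclude, since the adjacency of separator vertices to the open block is what you are proving --- no such fork arises, and if $\open{B}$ is complete the ``both prongs in $\open{B}$'' variant dies as well. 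Without the second fork there is no pair of isomorphic connected induced subgraphs to feed to $k$-CS-transitivity, hence no contradiction. (The $k=4$ case, where a fork is a star and prongs are not isomorphism-invariant, is a third gap you acknowledge but do not close.) The ``furthermore'' part is essentially the paper's: it applies Lemma~\ref{SepDisj}, except that for blocks with empty open block the paper invokes Lemma~\ref{completerpartite} to supply the needed neighbours rather than arguing that such blocks cannot exist.
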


\begin{proof}
There is a path of arbitrary length such that any two vertices with distance at least $3$ on the path do not lie in the same $\SF$-block and every other vertex on this path lies in an $\SF$-separator.
As there is some induced path between $s$ and $x$ which can be extended if necessary to an induced path of length $k-1$, we know that $d(x,s)<3$. Since $x$ does not lie in any $\SF$-separator and every path enters and leaves $\SF$-blocks through $\SF$-separators $d(x,s)<2$ holds.
By Lemma~\ref{SepDisj} and Lemma~\ref{completerpartite} we may conclude that distinct $\SF$-separators are disjoint. 
\end{proof}

The final step to show that these $k$-CS-transitive graphs with non-empty open blocks resemble some $Z_{\kappa,\lambda}(H_1,H_2)$ is that their automorphism group acts transitively on its open blocks.

\begin{Lem}
Let $k\ge 3$ and let $G$ be a connected $k$-CS-transitive graph with at least two ends and let $\SF$ be a minimal cut system such that the structure tree of $G$ and $\SF$ is basic and some open $\SF$-block is not empty. The automorphism group of $G$ act transitively on the open $\SF$-blocks.
\end{Lem}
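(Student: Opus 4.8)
The plan is to exploit the fact that, the structure tree $\TF$ being basic, $\Aut(G)$ already acts transitively on the $\SF$-separators, and to transport this transitivity across the bipartite tree $\TF$ to its block side. \emph{Reduction.} It suffices to prove: whenever two $\SF$-blocks $X,X'$ share a common $\SF$-separator $S$, there is an automorphism of $G$ mapping $X$ to $X'$. Granting this, let $X_0$ and $X_n$ be arbitrary $\SF$-blocks and let $X_0,S_1,X_1,\dots,S_n,X_n$ be the path between them in $\TF$; at the $i$-th step $X_{i-1}$ and $X_i$ both contain $S_i$, so we obtain $\alpha_i\in\Aut(G)$ with $\alpha_i(X_{i-1})=X_i$, and $\alpha_n\cdots\alpha_1$ maps $X_0$ to $X_n$. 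As each open block is the interior of a unique block, this gives transitivity on open blocks.

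\emph{The local picture.} Fix $S$ and blocks $X,X'$ with $S\sub X$ and $S\sub X'$. By the preceding lemmas of this subsection every vertex of $S$ is adjacent to every vertex of $\open X$ and of $\open{X'}$, whereas $\open X$ and $\open{X'}$ induce no edge between them; hence $G[S\cup\open X]$ and $G[S\cup\open{X'}]$ are the joins of $G[S]$ with $G[\open X]$ and with $G[\open{X'}]$, respectively. I would now use $k$-CS-transitivity, applied to $k$-forks and $k$-spoons poking into $\open X$ versus into $\open{X'}$, together with Lemma~\ref{FewCuts} to keep track of how many $\SF$-separators separate the relevant endpoints properly, to force $G[\open X]\isom G[\open{X'}]$ by an isomorphism $\varphi$ which, extended by the identity on $S$, is an isomorphism $G[S\cup\open X]\to G[S\cup\open{X'}]$. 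Since $\TF$ has no leaves (Lemma~\ref{noLeaves}), each of $X$, $X'$ contains at least one further $\SF$-separator, and because all $\SF$-separators lie in one $\Aut(G)$-orbit one upgrades $\varphi$ to an isomorphism $\psi\colon G[X]\to G[X']$ fixing $S$ setwise.

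\emph{Globalising.} Finally, extend $\psi$ to an automorphism of $G$ by recursion over $\TF$, in the spirit of the proof of Lemma~\ref{rddiam3}: once $\psi$ has been extended to the subgraph induced by all $\SF$-blocks within $\TF$-distance $l$ of $X$, extend one layer further by sending, for each $\SF$-separator $T$ reached so far, the blocks at $T$ to the blocks at $\psi(T)$ --- possible since all $\SF$-separators are $\Aut(G)$-equivalent and, by the previous step, all blocks at a given separator are mutually isomorphic over that separator. The union of this increasing chain of partial isomorphisms is the desired automorphism carrying $X$ to $X'$.

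The step I expect to be the main obstacle is the local picture: a priori $\open X$ and $\open{X'}$ could be non-isomorphic, or isomorphic but glued to $S$ in inequivalent ways, and excluding this means extracting, from any such discrepancy, a pair of isomorphic connected induced subgraphs on exactly $k$ vertices that no automorphism of $G$ can match. This is delicate precisely when the blocks are small compared with $k$, so that the witnessing subgraphs must straddle several blocks, and it forces a case analysis according to whether $\open X$ contains an edge, a pair of non-adjacent vertices, and so on --- paralleling the analysis in Lemmas~\ref{isomorphicBlocks} and~\ref{twoBlocks}.
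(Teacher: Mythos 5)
Your argument has a genuine gap in the globalising step, and this is not a repairable detail but a wrong approach for this part of the paper. At this stage $G$ is an abstract $k$-CS-transitive graph whose global structure is precisely what is being determined; the only automorphisms you are entitled to are those supplied by the $k$-CS-transitivity hypothesis, i.e.\ automorphisms matching two isomorphic \emph{connected induced subgraphs on exactly $k$ vertices}. The layer-by-layer recursion you borrow from the proof of Lemma~\ref{rddiam3} is legitimate there only because in Section~\ref{reverseDirection} the graph is already known to be one of the explicitly constructed graphs $X_{\kappa,\lambda}(H)$ or $Z_{\kappa,\lambda}(H_1,H_2)$, whose automorphism groups are visibly rich enough to realise each layer extension. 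Here, knowing that all $\SF$-separators lie in one $\Aut(G)$-orbit and that the blocks at a given separator are pairwise isomorphic over that separator does not give you an automorphism extending a prescribed partial isomorphism one layer further: for that you would need the pointwise stabiliser of the ball built so far to act transitively on the relevant configurations, which is exactly the kind of homogeneity that has not been established, and a union of partial isomorphisms is an automorphism only if each stage genuinely extends. In addition, your ``local picture'' --- that $G[\open X]\isom G[\open{X'}]$ compatibly over $S$ --- is announced rather than proved; you correctly identify it as the main obstacle but leave it open.

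The paper avoids all of this with a single application of $k$-CS-transitivity. Since $\TF$ has no leaves (Lemma~\ref{noLeaves}) and $k$ is even, an induced path of length $2$ through a block $X$ can be elongated to an induced path $P$ on $k$ vertices whose unique innermost edge lies in no $\SF$-block other than $X$; an analogous path $P'$ is built for $X'$. The automorphism $\alpha$ with $P^\alpha=P'$ provided by $k$-CS-transitivity carries the innermost edge of $P$ to that of $P'$, and since such an edge determines its block, $X^\alpha=X'$. No isomorphism of open blocks and no recursive extension is needed, and transitivity on blocks (hence on open blocks, as $\SF$ is $\Aut(G)$-invariant) follows directly, without your reduction to blocks sharing a separator.
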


\begin{proof}
Since $\TF$ has no leaves, every induced path in $G$ of length $2$ through some $\SF$-block $X$ can be elongated to an induced path $P$ of length $k-1$ such that the innermost edge ($k$ is even) of $P$ lies in no other $\SF$-block than $X$. A similar path $P'$ can be found for any other $\SF$-block $X'$ and hence there is an automorphism $\alpha$ of~$G$ with $P^\alpha=P'$ and thus also $X^\alpha=X'$.
\end{proof}

Thus every connected $k$-CS-transitive graph for $k\ge 3$ with more than one end and some non-empty open block is isomorphic to $Z_{\kappa,\lambda}(H_1,H_2)$ for some graphs $H_1$ and $H_2$.
It remains to specify the building blocks and possible values for $\kappa$ and $\lambda$ of these graphs.

\begin{Lem}\label{mn2}
Let $k\ge 3$ and let $G\isom Z_{\kappa,\lambda}(H_1,H_2)$ be a $k$-CS-transitive graph and let $\SF$ be a minimal cut system such that the structure tree of $G$ and $\SF$ is basic and some open $\SF$-block is not empty.
The following holds:
\begin{enumerate}[(i)]
\item\label{mn2Part1} At least one of $\kappa$ or $\lambda$ is $2$.
\item\label{2ndINmn2}\label{mn2Part2} If $H_i$ contains two non-adjacent vertices, then $H_j$ ($j\neq i$) is complete and $\kappa=\lambda=2$.
\item\label{mn2Part3} If $H_i$ contains an edge, then $H_j$ ($i\neq j$) contains no edge.
\end{enumerate}
\end{Lem}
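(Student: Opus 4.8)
The plan is to exploit the structure of $Z_{\kappa,\lambda}(H_1,H_2)$ by finding, for each assertion, two connected induced subgraphs on $k$ vertices that are abstractly isomorphic but that the automorphism group cannot match, contradicting $k$-CS-transitivity. Throughout I will use the earlier lemmas of this subsection: distinct $\SF$-separators are disjoint, adjacency between separators in different blocks is forbidden, and within a block a separator vertex is adjacent to every non-separator vertex. In the tree $T$ underlying the construction, a vertex in $A$ (replaced by a copy of $H_1$) is adjacent to $\kappa$ vertices of $B$ and vice versa; each copy of $H_i$ sits in a bipartite-type ``super-block'', and the separators between adjacent super-blocks are exactly these copies. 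Since $G$ has infinite diameter and no leaves in its structure tree (Lemmas \ref{infDiam} and \ref{noLeaves}), we can always elongate a short induced path to an induced path on $k$ vertices lying on a ray that follows the structure tree, controlling which edges lie in which blocks.

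For part (\ref{mn2Part3}): suppose $H_i$ contains an edge $xy$ and $H_j$ ($j\ne i$) also contains an edge $x'y'$. Fix a super-block $B$ that is a copy of $H_i$ and an adjacent super-block $B'$ that is a copy of $H_j$. Build a $k$-spoon whose triangle uses the edge $xy$ inside $B$ together with a common neighbour in $B'$ (this works because $H_i$ has an edge and because separator–non-separator adjacency holds), and attach a path of the remaining length running along a ray away from $B'$; this spoon pokes in the separator copy $B$. By the same recipe build a $k$-spoon whose triangle sits in $B'$. These two spoons are isomorphic as abstract graphs (both are $k$-spoons), but an automorphism mapping one to the other would have to send the super-block $B$ to the super-block $B'$, which is impossible because copies of $H_i$ and copies of $H_j$ lie in different $\Aut(G)$-orbits — the structure tree is bipartite-like and automorphisms preserve the bipartition classes $A,B$ of $T$ (they act on $T$ as tree automorphisms, and here the two classes have different behaviour since $H_1\not\cong H_2$ or $\kappa\ne\lambda$; when $H_1\cong H_2$ and $\kappa=\lambda$ the statement is about a single orbit and one argues directly that both cannot carry edges by a parity-of-path-length argument as in Lemma \ref{Y_kappa}). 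This contradicts $k$-CS-transitivity, proving (\ref{mn2Part3}).

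For part (\ref{mn2Part2}): if $H_i$ has two non-adjacent vertices $u,v$, form a $k$-fork whose prongs are $u,v$ inside the separator copy $H_i$ and whose handle runs into an adjacent block and then along a ray; this fork pokes in a single separator. If also $H_j$ had two non-adjacent vertices, we could build a $k$-fork poking in a copy of $H_j$, and again no automorphism matches the two (different orbits / parity argument), so $H_j$ is complete. If moreover $\kappa\ge 3$, then the vertex of $T$ whose copy is the $H_i$ in question has at least three neighbours in the other class, and from the two non-adjacent vertices $u,v$ of this $H_i$ we can instead route the two prongs of a $k$-fork into \emph{two different} adjacent copies of $H_j$ (using the $\kappa\ge 3$ branching of $T$), producing a fork that pokes in two distinct separators; this is abstractly isomorphic to the one poking in a single separator but distinguished by the number of $\SF$-separators separating its prongs properly, exactly as in Lemma \ref{SepDisj}. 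Hence $\kappa=2$, and by the symmetric argument (now using that $H_j$ is complete means $H_j$ has an edge, hence by (\ref{mn2Part3}) $H_i$ has no edge, so $H_i\cong\closure{K^m}$ and we may rerun the fork argument on the $\lambda$-side) $\lambda=2$. Part (\ref{mn2Part1}) is then immediate: if neither $\kappa$ nor $\lambda$ were $2$, pick any building block $H_i$; it has at least two vertices, and whether those two vertices are adjacent or not, one of the spoon/fork arguments above applies with the extra branching to produce a ``one-separator'' versus ``two-separator'' pair, contradicting $k$-CS-transitivity.

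The main obstacle is the degenerate case where $H_1\cong H_2$ and $\kappa=\lambda$, so that the two super-block classes lie in the \emph{same} $\Aut(G)$-orbit and the ``different orbits'' distinction collapses; there one must instead use the parity of the length of an induced path (an induced $k$-vertex path has an odd number of vertices when $k$ is odd, but here $k$ is even by Lemma \ref{openBlocksEven}, so its middle edge is well-defined) together with the fact that along a ray following the structure tree the blocks of the two classes alternate, so a path whose middle edge lies in a class-$A$ block cannot be mapped to one whose middle edge lies in a class-$B$ block unless the two classes are genuinely symmetric — and symmetry would force $H_1\cong H_2$ with all the separator–block incidences matching, which already pins down the structure enough to finish. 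Handling this uniformly with the generic case, rather than as a special subcase, is the delicate bookkeeping step.
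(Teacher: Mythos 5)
Your overall strategy coincides with the paper's: exhibit two isomorphic $k$-forks or $k$-spoons that poke incompatible configurations of $\SF$-separators and open $\SF$-blocks. But two points need repair. First, your justification for why a copy of $H_i$ can never be carried to a copy of $H_j$ is routed through the bipartition of the underlying tree, which (as you concede) collapses when $H_1\isom H_2$ and $\kappa=\lambda$; the ``parity of the middle edge'' patch you sketch for that case is not an argument, and the closing sentence (``symmetry would force \dots which already pins down the structure enough to finish'') is not a proof. The clean reason --- and the one the paper relies on --- is that in the setting of this lemma one class of building blocks consists precisely of the $\SF$-separators and the other of the (non-empty) open $\SF$-blocks, and $\SF$ is $\Aut(G)$-invariant; hence no automorphism maps a separator vertex to an open-block vertex, irrespective of whether $H_1\isom H_2$. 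With this observation your ``degenerate case'' does not exist, and each of your contrasts (prongs or triangle in a separator versus in an open block; prongs in one separator versus in two, the latter distinguished by counting properly separating $\SF$-separators as in Lemma~\ref{SepDisj}) goes through.

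Second, your derivation of part~(\ref{mn2Part1}) is genuinely gapped: you ``pick any building block $H_i$; it has at least two vertices'', but nothing forces this --- for instance $Z_{\kappa,\lambda}(K^1,K^1)$ is a bi-regular tree in which both building blocks are single vertices, yet (\ref{mn2Part1}) still has content there. The paper's argument avoids the assumption: if $\kappa,\lambda\ge 3$ one builds a $k$-fork whose prongs lie in two distinct open blocks (both adjacent to a common separator vertex, with the handle routed into a third neighbouring block, which is where $\kappa\ge 3$ is used) and a $k$-fork whose prongs lie in two distinct separators; these require no building block of order at least $2$ and are incompatible by the invariance of $\SF$. A further small slip: in your construction for (\ref{mn2Part3}) you attach the spoon's path ``running along a ray away from $B'$'', which would leave the two degree-$2$ triangle vertices in different building blocks, contradicting your claim that the spoon pokes in $B$; the path must be attached at the vertex in $B'$ and run away from $B$.
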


\begin{proof} (Recall that $H_1\not\isom H_2$ or $\kappa\ne\lambda$ since the copies of $H_1$ and $H_2$ are not $\Aut(G)$-isomorphic.)
Suppose $\kappa,\lambda\ne 2$, then there are two $k$-forks. One that pokes in two distinct open $\SF$-blocks, and one that pokes in two distinct $\SF$-separators. But there is no automorphism of~$G$ mapping one to the other. This proves (i).

With an analog argument follows (\ref{mn2Part2}): Suppose $\kappa$ or $\lambda$ is greater than $2$. Then there is a $k$-fork that pokes just one copy of an $H_i$ and one that pokes two distinct $\SF$-separators ($\lambda>2$) or two distinct open $\SF$-blocks ($\kappa>2$). Suppose on the other hand that there are two non-adjacent vertices in $H_j$, then there are two incompatible $k$-forks, too. One pokes an open $\SF$-block and the other one an $\SF$-separator.

For (iii), suppose that $H_i$ as well as $H_j$ contain edges. Then there are $k$-spoons  that poke an open $\SF$-block and others that poke an $\SF$-separator.
\end{proof}

From the previous lemma we immediately get the following corollary:

\begin{Cor}\label{complete-complement}
Let $k\ge 3$ and let $G\isom Z_{\kappa,\lambda}(H_1,H_2)$ be a $k$-CS-transitive graph and let $\SF$ be a minimal cut system such that the structure tree of $G$ and $\SF$ is basic and some open $\SF$-block is not empty. If both $H_1$ and $H_2$ have at least two vertices, one is a complete graph and the other one is the complement of a complete graph and $\kappa=\lambda=2$.\qed
\end{Cor}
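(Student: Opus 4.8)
The plan is a short case analysis that feeds directly into Lemma~\ref{mn2}, using only the standing hypothesis that both $H_1$ and $H_2$ have at least two vertices (together with the reminder, recorded in the proof of Lemma~\ref{mn2}, that $H_1\not\isom H_2$ or $\kappa\ne\lambda$). First I would split on which of $H_1,H_2$ contains an edge. By part~(\ref{mn2Part3}) of Lemma~\ref{mn2}, the blocks cannot both contain an edge, so at most one of them does.

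Next I would rule out the case that \emph{neither} $H_1$ nor $H_2$ contains an edge. In that situation each $H_i$ is an edgeless graph on at least two vertices, hence contains a pair of non-adjacent vertices; applying part~(\ref{mn2Part2}) of Lemma~\ref{mn2} with $i=1$ then forces $H_2$ to be a complete graph, contradicting the fact that $H_2$ is edgeless with $|H_2|\ge 2$. Consequently exactly one of $H_1,H_2$ contains an edge; by symmetry say $H_i$ does and $H_j$ ($j\ne i$) does not.

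It remains to read off the conclusion. Since $H_j$ is edgeless on at least two vertices, $H_j\isom\overline{K^m}$ for some $m\ge 2$, the complement of a complete graph, and $H_j$ contains two non-adjacent vertices. Applying part~(\ref{mn2Part2}) of Lemma~\ref{mn2} to $H_j$ (i.e.\ with the roles of $i$ and $j$ exchanged) yields that $H_i$ is complete and $\kappa=\lambda=2$. Thus one of $H_1,H_2$ is a complete graph, the other is the complement of a complete graph, and $\kappa=\lambda=2$, as claimed.

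There is no real obstacle here: the argument is a two-step case distinction and all the work has already been done in Lemma~\ref{mn2}. The only point requiring a moment's care is that the ``edgeless'' alternative must be shown genuinely impossible rather than merely collapsing to a degenerate $Z_{\kappa,\lambda}$, and that the hypothesis $|H_1|,|H_2|\ge 2$ is invoked precisely to upgrade ``edgeless'' to ``contains two non-adjacent vertices'' so that part~(\ref{mn2Part2}) can be applied.
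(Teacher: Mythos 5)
Your argument is correct and is exactly the intended derivation: the paper states this corollary with no separate proof, declaring it immediate from Lemma~\ref{mn2}, and your case analysis (parts~(\ref{mn2Part3}) and~(\ref{mn2Part2}) applied to whichever $H_i$ is edgeless on at least two vertices) is precisely the routine verification being left to the reader. Nothing is missing.
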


To finish the proof in the situation that both $H_1$ and $H_2$ have at least two vertices, we will restrict the order of those graphs:

\begin{Lem}
Let $k\ge 3$ and let $G\isom Z_{2,2}(H_1,H_2)$ be a $k$-CS-transitive graph and let $\SF$ be a minimal cut system such that the structure tree of $G$ and $\SF$ is basic and some open $\SF$-block is not empty.
If $H_1$ contains two non adjacent vertices or $H_2$ contains an edge, then $H_1\isom \closure{K^m}$ and $H_2\isom K^n$ with $2m+n\leq k+1$.
\end{Lem}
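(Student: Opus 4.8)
The plan is first to identify $H_1$ and $H_2$, and then, assuming $2m+n\ge k+2$, to produce two isomorphic connected induced subgraphs of $G$ on $k$ vertices that no automorphism of $G$ can interchange.

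First I would fix the shape of the building blocks. Since both $H_1$ and $H_2$ have at least two vertices and $\kappa=\lambda=2$, Corollary~\ref{complete-complement} shows that one of $H_1,H_2$ is a complete graph and the other is the complement of a complete graph. If $H_1$ contains two non-adjacent vertices it is not the complete one, and if $H_2$ contains an edge it is not the edgeless one; either way $H_1\isom\closure{K^m}$ and $H_2\isom K^n$ with $m,n\ge 2$. Thus $G$ is the concrete graph built from a double ray of \emph{copies} $\dots,C_{-1},C_0,C_1,C_2,\dots$ with $C_i\isom\closure{K^m}$ for $i$ even and $C_i\isom K^n$ for $i$ odd, consecutive copies joined completely; two vertices are adjacent exactly when they lie in a common $K^n$-copy or in consecutive copies. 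The copies are the twin classes of $G$, a $\closure{K^m}$-copy consisting of mutually non-adjacent twins and a $K^n$-copy of mutually adjacent twins, so every automorphism of $G$ permutes the copies and preserves their two types. Two facts I would record: for $a$ in a $\closure{K^m}$-copy $C_i$ one has $N(a)=C_{i-1}\cup C_{i+1}$, so any non-empty $S\sub C_i$ has $\Gamma(S)=C_{i-1}\cup C_{i+1}$, of size $2n$; whereas a set $S$ meeting two $\closure{K^m}$-copies at distance $2$, say $C_{2j-2}$ and $C_{2j}$, has $\Gamma(S)=C_{2j-1}$, of size $n$. Now suppose for a contradiction that $2m+n\ge k+2$, equivalently $k-n\le 2m-2$ and $k\le 2m+n-2$, and split into the cases $k\ge m+n$ and $k\le m+n$.

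In the case $k\ge m+n$ I would pick $A\sub C_0$ with $|A|=k-m-n$ (which is $\ge 0$ by the case assumption and $\le m$ since $k\le 2m+n-2$) and take $X_1$ induced by $A\cup C_1\cup C_2$; it is connected, has $k$ vertices, and contains the full $\closure{K^m}$-copy $C_2$. Then pick $B_0\sub C_0$, $B_2\sub C_2$ with $|B_0|,|B_2|\le m-1$ and $|B_0|+|B_2|=k-n$ (possible since $k-n\le 2m-2$), and take $X_2$ induced by $B_0\cup C_1\cup B_2$; it is connected, has $k$ vertices, and contains \emph{no} full $\closure{K^m}$-copy of $G$. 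Both are isomorphic to the join $K^n*\closure{K^{k-n}}$, hence to each other. But if $\alpha\in\Aut(G)$ satisfied $\alpha(X_1)=X_2$, then $\alpha(C_2)$ would be a full $\closure{K^m}$-copy of $G$ contained in $X_2$, which is impossible; this contradicts $k$-CS-transitivity. In the case $k\le m+n$ I would choose an integer $s$ with $\max\{2,k-n\}\le s\le\min\{m,k-1\}$ (the interval is non-empty because $m\ge 2$, $k\ge 3$ and $k-n\le m$), fix a clique $C\sub C_1$ on $k-s$ vertices, take $S_1\sub C_0$ with $|S_1|=s$, and take $S_2$ consisting of $\lceil s/2\rceil$ vertices of $C_0$ and $\lfloor s/2\rfloor$ vertices of $C_2$; let $X_1,X_2$ be induced by $C\cup S_1$ and $C\cup S_2$. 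Both are connected, have $k$ vertices, and are isomorphic to $K^{k-s}*\closure{K^s}$, in which the independent part of size $s$ is the set of vertices of minimum degree $k-s<k-1$ and hence is preserved by every isomorphism. So an automorphism $\alpha$ with $\alpha(X_1)=X_2$ would give $\alpha(S_1)=S_2$ and thus $|\Gamma(S_1)|=|\Gamma(S_2)|$; but $|\Gamma(S_1)|=2n\ne n=|\Gamma(S_2)|$ by the neighbourhood computation — again a contradiction. Hence $2m+n\le k+1$.

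The step I expect to be the main obstacle is precisely the case split: no single pair of witnesses works over the whole range $2m+n\ge k+2$. When $k>m+n$, the independent part of any induced copy of $K^n*\closure{K^{k-n}}$ is forced to spread over two $\closure{K^m}$-copies, so $\Gamma$ cannot tell realizations apart and one must instead detect a full $\closure{K^m}$-block; when $k\le m+n$ that block no longer fits into a $k$-vertex subgraph, but the independent part is now small enough to lie inside a single $\closure{K^m}$-copy, which is exactly the slack the $\Gamma$-argument needs. Arranging the arithmetic so that the required vertex sets exist in each regime exactly when $2m+n\ge k+2$ is the delicate bookkeeping, and one should also keep track of the easy degenerate sub-cases (e.g.\ $k-n$ small), which the chosen ranges for $|A|$ and $s$ absorb automatically.
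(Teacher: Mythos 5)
Your proof is correct (within the lemma's intended scope, made explicit in the sentence preceding it in the paper, namely that both $H_1$ and $H_2$ have at least two vertices, which is what licenses the appeal to Corollary~\ref{complete-complement}) and it follows essentially the same route as the paper's: both arguments compare two isomorphic connected induced $k$-vertex subgraphs of the form $K^{j}\ast\closure{K^{k-j}}$ whose independent parts are distributed differently over the two $\closure{K^m}$-copies flanking a $K^n$-copy, and exploit the fact that every automorphism of $G$ respects the building blocks. Your explicit two-case split, the twin-class justification of copy-preservation, and the $\abs{\Gamma(\cdot)}$ invariant in the regime $k\le m+n$ are careful elaborations of the paper's terse balanced-versus-unbalanced comparison rather than a different method.
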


\begin{proof}
By Corollary \ref{complete-complement} it suffices to show that the boundaries for $m$ and $n$ hold.
Let $H_1$ be the complement of a complete graph and let $H_2$ be a complete graph. Let $S$ be some $\SF$-separator, and let $X$ and $Y$ be two distinct $\SF$-blocks with $S\sub X,Y$.
Then any graph with precisely $k$ vertices that consists of $S$, more than $\frac{k-n}{2}$ ($+\frac{1}{2}$ if $n$ is odd) vertices in $X$ and less than $\frac{k-n}{2}$ vertices in $Y$ can be mapped onto a graph consisting of $S$, $\frac{k-n}{2}$ ($+\frac{1}{2}$ if $n$ is odd) many vertices in $X$ and $\frac{k-n}{2}$ ($-\frac{1}{2}$ if $n$ is odd) many vertices in $Y$.
Thus
$$2m+n \le 1+n+(\frac{k-n}{2}+\frac{1}{2})+(\frac{k-n}{2}-\frac{1}{2})\le k+1.$$
We seemingly loose one vertex in the case that $n$ is even. Since $2m+n$ and $k$ are even then, this is not a true loss.
\end{proof}

\begin{Lem}
Let $k\ge 3$ and let $G\isom Z_{\kappa,\lambda}(H_1,H_2)$ be a $k$-CS-transitive graph and let $\SF$ be a minimal cut system such that the structure tree of $G$ and $\SF$ is basic and some open $\SF$-block is not empty. If $\abs{H_1}=1$ and one of $\kappa$ and $\lambda$ is not $2$, $H_2$ is a complete graph on at most $k-1$ vertices.
\end{Lem}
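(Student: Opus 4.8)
We already know that $G\isom Z_{\kappa,\lambda}(H_1,H_2)$ with $\abs{H_1}=1$, so the building blocks that replaced the $A$-vertices of the defining tree are single vertices; these single vertices are precisely the $\SF$-separators, each such separator $s$ is adjacent to every vertex of each of the $\kappa$ copies of $H_2$ that it meets, each copy of $H_2$ is adjacent to exactly $\lambda$ separators (and $\lambda\ge 2$ always), distinct separators are disjoint and pairwise non-adjacent, and there are no adjacencies between distinct copies of $H_2$. These facts come from the construction of $Z_{\kappa,\lambda}$ together with the lemmas preceding Corollary~\ref{complete-complement}. The plan is to first pin down the isomorphism type of $H_2$ and then bound its order by a common-neighbour count.

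\emph{Step 1: $H_2$ is complete.} If $H_2$ contained two non-adjacent vertices, then part~(ii) of Lemma~\ref{mn2} would force $\kappa=\lambda=2$, contrary to the hypothesis that one of $\kappa,\lambda$ differs from $2$. Hence $H_2$ has no pair of non-adjacent vertices, i.e.\ $H_2\isom K^n$ for some $n\ge 1$. If $n=1$ there is nothing left to prove since $k\ge 3$, so assume $n\ge 2$.

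\emph{Step 2: $n\le k-1$.} Suppose for contradiction that $n\ge k$, and fix one copy $B$ of $H_2$ and a separator $s$ adjacent to $B$. Choose $Q\sub B$ inducing a $K^k$ (possible since $\abs{B}=n\ge k$), and let $Q'$ consist of $s$ together with $k-1$ vertices of $B$; since $s$ is adjacent to all of $B$, we have $G[Q']\isom K^k\isom G[Q]$. Both $G[Q]$ and $G[Q']$ are connected induced subgraphs of $G$ of order $k$, so $k$-CS-transitivity provides an automorphism $\alpha$ of $G$ with $Q^\alpha=Q'$, whence $\abs{\Gamma(Q)}=\abs{\Gamma(Q')}$. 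But a vertex in $\Gamma(Q)$ is either one of the $n-k$ vertices of $B\sm Q$ or one of the $\lambda$ separators adjacent to $B$, so $\abs{\Gamma(Q)}=n-k+\lambda$; while a vertex in $\Gamma(Q')$ must be adjacent to $s$ and to each of the $k-1$ vertices that $Q'$ picks out of $B$, and this leaves exactly the $n-k+1$ vertices of $B\sm Q'$ --- indeed the $\lambda-1$ other separators of $B$ are non-adjacent to $s$, and the vertices of the other $\kappa-1$ copies of $H_2$ at $s$ are non-adjacent to $Q'\cap B$. Since $\lambda\ge 2$ we get $n-k+\lambda\ne n-k+1$, a contradiction. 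Hence $n\le k-1$.

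The only delicate point is the common-neighbour bookkeeping in Step~2: it is precisely the fact that distinct $\SF$-separators are non-adjacent (established earlier) which makes a $K^k$ running through a separator $s$ lose the other $\lambda-1$ separators of its block from its common neighbourhood, producing the discrepancy with a $K^k$ lying wholly inside a block. Everything else is routine.
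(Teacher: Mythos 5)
Your proof is correct and follows essentially the same route as the paper: completeness of $H_2$ comes from Lemma~\ref{mn2}\,(\ref{2ndINmn2}), and the bound $n\le k-1$ comes from comparing a $K^k$ lying inside one open block with a $K^k$ using a separator vertex. The only difference is in the last step, where the paper simply asserts that no automorphism maps one $K^k$ onto the other (implicitly because the $\Aut(G)$-invariant cut system forces separators to map to separators), whereas you derive the same contradiction by counting common neighbours ($n-k+\lambda$ versus $n-k+1$), which is an equally valid and somewhat more self-contained justification.
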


\begin{proof}
The first part follows directly from Lemma \ref{mn2}\,(\ref{2ndINmn2}).
For the second part, suppose that $H_2$ has more than $k-1$ vertices.
Then every open $\SF$-block $\open{X}$ contains an isomorphic copy of a $K^k$ and there is a second isomorphic copy of a $K^k$ with $k-1$ vertices in $\open{X}$ and one vertex in some $\SF$-separator $S\sub X$. Since there is no automorphism of~$G$ mapping the one onto the other, $H_2$ has at most $k-1$ vertices.
\end{proof}

As the last part in this case of the proof (that there is some non-empty open block) we will determine the graphs $H_2$ if $H_1$ is only one vertex and the open blocks are neither complete nor complements of complete graphs.

\begin{Lem}
Let $k\ge 3$, let $G\isom Z_{2,2}(H_1,H_2)$ be a $k$-CS-transitive graph with at least two ends, and let $\SF$ be a minimal cut system of $G$ such that the structure tree of~$G$ and $\SF$ is basic and that some open $\SF$-block is not empty. If $\abs{H_1}=1$ and $H_2$ is neither complete nor a complement of a complete graph, then $H_2\in \EF_{k,m,n}$ with $m\leq k-2$ and $n\leq\frac{k}{2}+1$.
\end{Lem}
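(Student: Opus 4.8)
The plan is to show that $H_2$ fulfils the hypotheses of Corollary~\ref{CharEno} --- it is finite, and by hypothesis neither complete nor a complement of a complete graph --- so that $H_2\in\EF_{k,m,n}$ for $m=\Delta(H_2)$ and $n$ the least integer such that every induced subgraph of $H_2$ on $n$ vertices is connected, and then to verify the bounds $m\le k-2$ and $n\le\frac{k}{2}+1$. Throughout I use the following description of $G\isom Z_{2,2}(K^1,H_2)$, together with the structural lemmas proved earlier in this subsection: the $\SF$-separators of $G$ are single vertices, the open $\SF$-blocks are exactly the copies $C$ of $H_2$ used in the construction, each open block is flanked by exactly two $\SF$-separators, and each $\SF$-separator vertex is adjacent to \emph{every} vertex of each open block flanking it. In particular, every automorphism of $H_2$ extends to an automorphism of $G$ fixing everything outside one fixed copy $C_0\isom H_2$; hence, to establish a transitivity property of $H_2$ it suffices to produce, for the relevant isomorphism between two induced subgraphs of $C_0$, an automorphism of $G$ that extends it and stabilises $C_0$, and then restrict it to $C_0$. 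Note also that $k$ is even, by Lemma~\ref{openBlocksEven}.

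For the $l$-S-transitivity of $H_2$ (all $l\le k-1$): given isomorphic induced subgraphs $A,B\sub C_0$ on $l$ vertices, I pick a flanking $\SF$-separator vertex $v$ of $C_0$ and attach at $v$ an induced path $P$ leading away from $C_0$ through successive $\SF$-blocks, so that $A\cup\{v\}\cup P$ and $B\cup\{v\}\cup P$ are connected induced subgraphs of $G$ on exactly $k$ vertices (such a $P$ exists since the basic structure tree has no leaves, by Lemma~\ref{noLeaves}, and the diameter of $G$ is infinite, by Lemma~\ref{infDiam}). By $k$-CS-transitivity some isomorphism between these two subgraphs extends to $\gamma\in\Aut(G)$. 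If $l\ge2$, then $\gamma$ must carry $A$ onto $B$, since $A$ is a set of $l\ge2$ non-separator vertices lying in a single open block and $B$ is the only set of that size in $B\cup\{v\}\cup P$ with this feature (the other non-separator vertices of $P$ lie in pairwise distinct open blocks); as open blocks are pairwise disjoint, $\gamma$ then stabilises $C_0$, so $\gamma|_{C_0}\in\Aut(H_2)$ takes $A$ to $B$. If $l=1$, then $A\cup\{v\}\cup P$ is an induced path of length $k-1$ whose separator vertices occupy precisely its even positions; as $k$ is even, a path-reversing isomorphism would move a separator vertex to a non-separator position, so it cannot extend to $G$, and therefore $\gamma$ fixes $v$, carries $A$ onto $B$, and stabilises $C_0$. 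Thus $H_2$ is vertex-transitive.

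For the remaining conditions I compare subgraphs built inside $C_0$ with variants that reach into an $\SF$-separator. If some $u\in C_0$ had $k-1$ neighbours $N$ in $C_0$, then $G[\{u\}\cup N]$ and $G[\{v\}\cup N]$ (with $v$ a flanking separator vertex of $C_0$) are both the cone over $G[N]$, hence isomorphic; but no automorphism of $G$ maps one to the other, since the former consists entirely of non-separator vertices whereas the latter contains the separator vertex $v$ --- contradicting $k$-CS-transitivity. So $\Delta(H_2)\le k-2$. Similarly, if non-adjacent $x,y\in C_0$ had a set $N$ of $k-2$ common neighbours in $C_0$, then $G[\{x,y\}\cup N]$ and $G[\{v,v'\}\cup N]$ (with $v,v'$ the two flanking separator vertices of $C_0$, which are non-adjacent and each dominate $C_0$) are both the join of $\overline{K^2}$ with $G[N]$, hence isomorphic but not $\Aut(G)$-equivalent for the same reason; so no two non-adjacent vertices of $H_2$ have $k-2$ common neighbours. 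Finally, to bound $n$ I adapt the argument of Lemma~\ref{twoBlocks}\,(c): from a disconnected induced subgraph $X\sub C_0$ I place two copies of $X$ in two consecutive open blocks joined through the single $\SF$-separator vertex $v$ that dominates both, and delete suitable vertices to produce two isomorphic induced subgraphs of $G$ on $k$ vertices that are not $\Aut(G)$-equivalent --- the structure tree being a double ray, any automorphism would have to match up the separator-and-block size profiles along it, which fails. As the middle of this configuration is now a single vertex rather than a full copy of $H_2$, the counting $2|X|+1-2\ge k$ gives the threshold $|X|\le\frac{k}{2}+1$, with one extra vertex deleted from one of the two flanking copies to absorb the parity correction (since $v$ itself cannot be deleted without disconnecting the configuration). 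The steps I expect to need the most care are verifying that the \emph{extending} automorphism (not merely some abstract isomorphism of the $k$-vertex subgraphs) sends $A$ to $B$ --- which for $l=1$ rests on the parity of $k$ --- and the bookkeeping in (c) that pins down the threshold $\frac{k}{2}+1$. Combining all of this gives $H_2\in\EF_{k,m,n}$ with $m\le k-2$ and $n\le\frac{k}{2}+1$.
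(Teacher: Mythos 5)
Your proof is correct and takes essentially the same route as the paper's, which likewise reduces the statement to Corollary~\ref{CharEno} and verifies conditions (a)--(d) by adapting the arguments of Lemma~\ref{twoBlocks} to the situation where the separators are single dominating vertices (your parity argument for the $l=1$ case of (a) is a clean substitute for the paper's two-separator detour there). The one point genuinely needing the care you flag is the bookkeeping in (c): the extra deletion must be distributed \emph{asymmetrically} between the two compared configurations --- e.g.\ all three vertices removed from one flanking copy in the first configuration versus a $1$--$2$ split in the second, so that the block-size profiles $\set{|X|,|X|-3}$ and $\set{|X|-1,|X|-2}$ differ while the multisets of components agree --- since deleting the same extra vertex in both configurations would equalise the profiles and void the non-equivalence argument.
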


\begin{proof}
By Corollary \ref{CharEno} it suffices to show that (a) $H_2$ is $l$-S-transitive for all $l\le k-1$, (b) $\Delta(H_2)\le k-2$, (c) any vertex set of order $\frac{k}{2}+1$ in $H_2$ is connected, and (d) no two non-adjacent vertices of $E$ have $k-2$ common neighbours.

The proofs of~(a), (b) and~(d) are analog to those of Lemma~\ref{twoBlocks}~(a), (b) and~(d).

\begin{enumerate}[(a)]
\setcounter{enumi}{2}
\item We follow the argument of Lemma~\ref{twoBlocks} (c).
The boundary of $n$ is slightly different, since there is only one vertex in an $\SF$-separator available.
Thus every set of order $\frac{k-1}{2}+1$ is connected, since $k$ is even a set with at least $\frac{k-1}{2}+1$ vertices has at least $\frac{k}{2}+1$ vertices.\qedhere
\end{enumerate}
\end{proof}

These lemmas let us finish the case for non-empty open blocks.

\begin{Tm}\label{openBlocks}
Let $k\ge 3$, let $G\isom Z_{2,2}(H_1,H_2)$ be a $k$-CS-transitive graph with at least two ends, and let $\SF$ be a minimal cut system of $G$ such that the structure tree of~$G$ and $\SF$ is basic and that some open $\SF$-block is not empty.
Then $k$ is even and $G$ is isomorphic to one of the following graphs:
\begin{enumerate}[(1)]
\item[(6)] $Z_{2,2}(\overline{K^m},K^n)$ with $2m+n\leq k+1$ and $m\leq n$;
\item[(7)] $Z_{\kappa,\lambda}(K^1,K^n)$ with $n\leq k-1$ and cardinals $\kappa,\lambda$ with $\kappa=2$ or $\lambda=2$;
\item[(8)] $Z_{2,2}(K^1,E)$ with $E\in\EF_{k,m,n}$, $m\leq k-2$ and $n\leq\frac{k}{2}+1$.\qed
\end{enumerate}
\end{Tm}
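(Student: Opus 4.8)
The plan is to read the statement off the lemmas already established in this subsection; there is little genuinely new here, and the work is mostly a careful case analysis. First, Lemma~\ref{openBlocksEven} gives that $k$ is even, and by the discussion preceding the theorem---the structural lemmas of this subsection together with the transitivity of $\Aut(G)$ on the open $\SF$-blocks---we know $G\isom Z_{\kappa,\lambda}(H_1,H_2)$ for finite graphs $H_1,H_2$ and cardinals $\kappa,\lambda\ge2$ with $H_1\not\isom H_2$ or $\kappa\ne\lambda$; moreover, by Lemma~\ref{mn2}\,(\ref{mn2Part1}), at least one of $\kappa,\lambda$ equals $2$. It remains to pin down $H_1,H_2,\kappa,\lambda$, and I would distinguish cases according to whether both building blocks have order at least $2$ or one of them is a single vertex.

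\emph{Both $|H_1|\ge2$ and $|H_2|\ge2$.} Then Corollary~\ref{complete-complement} gives $\kappa=\lambda=2$ and, after possibly interchanging the two building blocks---which is harmless for a $Z_{2,2}$, since both bipartition classes of the underlying tree have degree $2$---that $H_1\isom\overline{K^m}$ and $H_2\isom K^n$ with $m,n\ge2$. The lemma asserting $2m+n\le k+1$ for such graphs, applied with the complement block in the role of $H_1$, now gives exactly case~(6).

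\emph{One building block is a single vertex.} After possibly interchanging the two bipartition classes (and relabelling $\kappa,\lambda$ accordingly) we may assume $H_1\isom K^1$; write $H_2$ for the other block, and note $|H_2|\ge2$ because $Z_{\kappa,\lambda}(K^1,K^1)$ has only empty open blocks. If $\kappa\ne2$ or $\lambda\ne2$, then Lemma~\ref{mn2}\,(\ref{mn2Part2}) forces $H_2$ to be complete, and the lemma treating a complete $H_2$---whose order estimate $|H_2|\le k-1$ does not in fact use the hypothesis on $\kappa,\lambda$---puts $G$ into case~(7). If $\kappa=\lambda=2$ I split according to $H_2$: if $H_2$ is complete it is again in case~(7) (the same order bound applying); if $H_2$ is neither complete nor a complement of a complete graph, the last of the preceding lemmas gives $H_2\in\EF_{k,m,n}$ with $m\le k-2$ and $n\le\frac k2+1$, which is case~(8); and if $H_2\isom\overline{K^m}$ with $m\ge2$, then comparing two induced stars on $k$ vertices with a common centre whose $k-1$ leaves are distributed differently among the two adjacent copies of $\overline{K^m}$ shows $m\le\frac k2$, i.e.\ $2m+1\le k+1$, and rewriting $G\isom Z_{2,2}(\overline{K^m},K^1)$ places $G$ again in case~(6) (with $n=1$).

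The substance of this theorem is organisational rather than conceptual, and the part I expect to need the most care is making the case split genuinely exhaustive while checking that the bounds produced by the separate lemmas reassemble into exactly the families (6)--(8). In particular one must track which building block is forced to be an $\SF$-separator (the one of smaller order), treat the degenerate identification $\overline{K^1}=K^1$ consistently across the three families, and clarify what normalisation of the parameters in case~(6) is legitimate---the instances arising from $|H_1|=1$ indicate that one cannot in general additionally demand $m\le n$ there. The only sub-case not already covered verbatim by an earlier lemma, $H_1\isom K^1$ with $H_2\isom\overline{K^m}$ and $m\ge2$, needs the short star-comparison argument indicated above.
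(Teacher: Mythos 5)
Your proposal is correct and matches the paper's (implicit) argument: Theorem~\ref{openBlocks} is stated without a separate proof, being exactly the assembly of Lemma~\ref{openBlocksEven}, Lemma~\ref{mn2}, Corollary~\ref{complete-complement} and the three subsequent lemmas that you carry out, with your one extra sub-case ($H_1\isom K^1$, $H_2\isom\overline{K^m}$, $m\ge 2$) equally well absorbed into case~(6) by the symmetry $Z_{2,2}(H_1,H_2)\isom Z_{2,2}(H_2,H_1)$ and the ``$2m+n\le k+1$'' lemma applied with the blocks relabelled, which yields the same bound $m\le\frac{k}{2}$ as your star comparison. Your remark that the normalisation $m\le n$ in case~(6) cannot in general be imposed is also accurate, and consistently that condition is absent from Theorem~\ref{mainTm2}(\ref{mainTm2FirstZ}).
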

With Section~\ref{reverseDirection} the Theorems~\ref{emptyOpenBlocks} and \ref{openBlocks} implie one of our main results, Theorem~\ref{mainTm2}.

\section{Ends of $k$-CS-transitive graphs}\label{EndAction}

Gray \cite{G} asked whether every locally finite $k$-CS-transitive graph (with $k\ge 3$) is end-transitive.
With Theorem~\ref{mainTm2} we may answer his question.

\begin{Tm}
Let $k\ge 3$ and let $G$ be a connected locally finite graph. If $G$ is $k$-CS-transitive, then it is end-transitive.\qed
\end{Tm}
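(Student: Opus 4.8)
The plan is to reduce the locally finite case to the classification already obtained in Theorem~\ref{mainTm2} and then check end-transitivity graph class by graph class. First I would invoke Theorem~\ref{mainTm2}: a connected locally finite $k$-CS-transitive graph $G$ with more than one end is isomorphic to one of the graphs in the list (\ref{mainTm2First})--(\ref{mainTm2Last}). Since $G$ is locally finite, the building-block graphs $H$, $H_1$, $H_2$ are finite (which they are anyway) and, crucially, the cardinals $\kappa,\lambda$ are finite integers. So it suffices to show that each of $X_{\kappa,\lambda}(H)$, $Y_\kappa$, and $Z_{\kappa,\lambda}(H_1,H_2)$ with all parameters finite is end-transitive. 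The graphs with exactly one end or none are excluded by hypothesis, so I only treat the genuinely multi-ended representatives.

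Next I would identify the ends of each class with the ends of the underlying tree used in the construction. For $X_{\kappa,\lambda}(H)$ and $Y_\kappa$ the relevant tree is the block-cut tree (a $(\kappa,\lambda)$-biregular tree, resp.\ the tree alternating $K^2$-blocks and $K^\kappa$-blocks); for $Z_{\kappa,\lambda}(H_1,H_2)$ it is the defining biregular tree $T$. In each case a ray in $G$ projects to a ray in this tree, two rays of $G$ are equivalent exactly when their tree-projections are, and conversely every tree-ray lifts to a graph-ray; hence $\Omega(G)\cong\Omega(T)$ as $\Aut(G)$-sets, with $\Aut(G)$ acting through its natural (surjective-enough) map to $\Aut(T)$. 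Since biregular trees are end-transitive under their full automorphism group, the only thing to verify is that enough of $\Aut(T)$ is realized by automorphisms of $G$ — and this follows from the very homogeneity that makes $G$ $k$-CS-transitive: the building blocks are (combinatorially) homogeneous finite graphs, so any tree automorphism can be lifted by choosing, block by block, an isomorphism between the corresponding copies of $H$, $H_1$ or $H_2$, patched along the separators. Concretely, given two ends $\omega_1,\omega_2$ of $G$, pick rays $R_1,R_2$ representing them; by $k$-CS-transitivity (applied to suitable length-$(k-1)$ induced subpaths, exactly as in the anchor arguments of Section~\ref{reverseDirection}, in particular Lemma~\ref{rddiam3}) there is an automorphism of $G$ mapping an initial segment of $R_1$ onto an initial segment of $R_2$; a compactness/back-and-forth extension along the tree, using homogeneity of the finite blocks at each step, upgrades this to an automorphism carrying $R_1$ (tail) into the equivalence class of $R_2$, i.e.\ $\omega_1\mapsto\omega_2$.

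The main obstacle I anticipate is the last lifting/extension step: turning a ``finite'' automorphism agreeing with the desired tree map on a ball into a genuine automorphism of the infinite graph that tracks the chosen end. This is handled by the recursive construction already spelled out in the proof of Lemma~\ref{rddiam3} — build $\alpha_0$, then $\alpha_1,\alpha_2,\dots$, each agreeing with the previous one on a larger ball and respecting the tree structure outside, using that the building blocks are homogeneous so local isomorphisms always extend. Because $G$ is locally finite these balls are finite and the construction converges to a well-defined automorphism; one just has to be slightly careful that the successive choices are made coherently along the fixed ray so that the limit automorphism sends $\omega_1$ to $\omega_2$ rather than wandering off. Everything else is bookkeeping: checking that each listed graph with finite parameters really is multi-ended precisely when the tree is infinite and not a single path, and noting that in the degenerate cases ($\kappa=\lambda=2$ giving a double ray, or a two-ended graph) end-transitivity is immediate since there are at most two ends and the relevant reflection is visibly an automorphism.
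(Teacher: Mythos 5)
Your proposal is correct and follows exactly the route the paper intends: the theorem is stated as a direct consequence of Theorem~\ref{mainTm2}, namely that a locally finite multi-ended $k$-CS-transitive graph is one of the listed graphs with finite parameters, each of which is end-transitive because its ends are those of the underlying (bi)regular tree and tree automorphisms lift via the homogeneity of the finite building blocks. Only a cosmetic remark: the hypothesis does not exclude graphs with at most one end, but for those end-transitivity is vacuous, so nothing is lost.
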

This theorem does not extend to graphs with vertices of infinite degree.
For example the graphs $X_{\kappa,\lambda}$ with $\kappa\ge\aleph_0, \lambda\ge 2$ contain fundamentally different ends.
Let us make this precise, there are \emph{local} ends containing only rays that meet a vertex set  of finite diameter again and again, and there are \emph{global} ends that do not contain any such ray.
Theorem~\ref{mainTm2} shows that in $k$-CS-transitive graphs with $k\ge 3$ every end is either local or global.

\begin{Tm}
Let $k\ge 3$ and $G$ be a connected $k$-CS-transitive graph with more than one end. Then every end of G is either local or global. The automorphism group of~$G$ acts transitively on the local ends, as well as on the global ends.
$G$ is end-transitive if and only if it has no local end.
 \qed
\end{Tm}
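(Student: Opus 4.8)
The statement has three parts: (1) every end is local or global; (2) $\Aut(G)$ acts transitively on each of the two classes; (3) $G$ is end-transitive iff it has no local end. The natural approach is to use the full classification in Theorem~\ref{mainTm2}, case by case, together with the basic structure tree $\TF$ of $G$ and a minimal cut system $\SF$ (Corollary~\ref{basicTreeExists}). The key structural fact is that every end $\omega$ of $G$ gives rise to an end or a vertex of $\TF$: a ray in $G$ either eventually stays inside one $\SF$-block (equivalently, meets some separator family only finitely often after some point but keeps returning to a fixed block), or it passes through infinitely many distinct $\SF$-separators and hence defines an end of $\TF$. I would first make this dichotomy precise, showing that the first type of ray meets a vertex set of finite diameter infinitely often (it lives in a single finite-diameter block, or in a bounded union of blocks around one vertex of infinite degree) and is therefore a \emph{local} end, while the second type defines a ray in $\TF$ and is \emph{global} --- no ray equivalent to it can return infinitely often to any finite-diameter set, because consecutive far-apart separators along $R$ have large distance in $G$ by the arguments in Lemma~\ref{infDiam} and Lemma~\ref{noLeaves}. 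This already yields part (1).

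For part (2), I would argue transitivity on global ends first: given two global ends $\omega,\omega'$, pick rays $R,R'$ in $\TF$ representing them; since $\SF$ is $\Aut(G)$-invariant and (by Lemma~\ref{noLeaves}) $\TF$ has no leaves, $\Aut(G)$ acts on $\TF$, and by the edge- and block-transitivity that the classification forces (all separators in one orbit, all blocks in one or two orbits with a regular bipartite-tree or tree pattern), one can map any ``directed edge'' of $\TF$ to any other of the same type, hence build an automorphism taking $R$ eventually onto $R'$; this maps $\omega$ to $\omega'$. For local ends one works inside a single block: a local end corresponds to a ray trapped in some $X_{\kappa,\lambda}$-type block or winding around a vertex of infinite degree, and the homogeneity of the building blocks $H$ (together with $k$-CS-transitivity applied to long induced paths within the block) lets us move any such ray to any other; concretely, in the cases $X_{\kappa,\lambda}(H)$, $Y_\kappa$, $Z_{\kappa,\lambda}(H_1,H_2)$ one checks directly that the stabiliser of a block acts transitively on the local ends it contains, and block-transitivity finishes the job. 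The main obstacle here is the bookkeeping in the non-locally-finite cases --- one must be careful that ``local end'' is well-defined (independent of the choice of finite-diameter set) and that the two orbits really do not merge, i.e.\ that no automorphism sends a bounded-return ray to an unbounded one; this follows because automorphisms preserve the $\SF$-separator structure and hence the property of a ray passing through infinitely many distinct separators.

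Finally, part (3) is then immediate: if $G$ has no local end, every end is global, so by part (2) $\Aut(G)$ is end-transitive; conversely, if $G$ has a local end then it also has a global end (any ray in $\TF$ gives one, and $\TF$ is infinite with no leaves), and by part (1) these lie in different orbits, so $G$ is not end-transitive. For the (easy) auxiliary Theorem stating that locally finite $k$-CS-transitive graphs are end-transitive, I would simply note that in the locally finite case no block is infinite and no vertex has infinite degree, so every end is global, and part (2) applies. I expect the case analysis itself to be routine given Theorem~\ref{mainTm2}; the only delicate point is the uniform treatment of ``local'' versus ``global'' across all of cases (1)--(8), which I would isolate as a short preliminary lemma about rays and the structure tree before invoking the classification.
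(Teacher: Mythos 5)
Your plan matches the paper's approach: the paper states this theorem with no proof at all, presenting it as an immediate consequence of the classification in Theorem~\ref{mainTm2}, which is exactly the route you take. Your preliminary dichotomy for rays (bounded-return versus passing through infinitely many $\SF$-separators, with the inequivalence of the two types secured by the distance estimates from Lemma~\ref{infDiam}) is a sound and sensible way to organise the case-by-case inspection that the paper leaves implicit.
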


Kr\"on and M\"oller \cite{K,KM} introduced metric ends.
A {\em metric ray} in some graph $G$ is a ray in $G$ such that any infinite subset of its vertices has infinite diameter.
Two metric rays $R_1$ and $R_2$ are {\em metrically equivalent} if there is no vertex set $S$ of finite diameter such that $R_1$ and $R_2$ lie eventually in different components of $G-S$.
A {\em metric end} is an equivalence class of metrically equivalent metric rays.
In locally finite graphs the notion of being an end and being a metric end coincide.
Thus for connected locally finite $k$-CS-transitive graphs ($k\ge3$) with more than one \emph{end} the automorphism group acts transitively on its metric ends.
In spite of the local ends this extends to graphs that are not locally finite.

\begin{Tm}
If $k\ge 3$, then the automorphism group of any $k$-CS-transitive graph with at least two ends acts transitively on the metric ends of the graph.\qed
\end{Tm}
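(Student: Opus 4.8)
The plan is to use the classification in Theorem~\ref{mainTm2} together with the already-established facts about how $\Aut(G)$ acts on ends. First I would invoke the dichotomy from the preceding theorem: every end of a $k$-CS-transitive graph $G$ with at least two ends is either \emph{local} or \emph{global}, the automorphism group acts transitively on the local ends and transitively on the global ends, and $G$ is end-transitive precisely when it has no local end. Hence the only obstruction to $\Aut(G)$ acting transitively on \emph{all} ends is the possible coexistence of local and global ends; to prove transitivity on \emph{metric} ends it therefore suffices to show that no local end contains a metric ray, so that metric ends are exactly the global ends, on which $\Aut(G)$ is already transitive.

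The key step is thus the following claim: a local end of $G$ contains no metric ray. Recall that a local end, by definition, consists only of rays that return infinitely often to some vertex set of finite diameter, whereas a metric ray has the property that every infinite subset of its vertices has infinite diameter. These two conditions are in direct tension. Concretely, I would argue that if $R$ is a ray in a local end $\omega$, then by definition of local there is a set $D$ of finite diameter meeting $R$ infinitely often; the infinitely many vertices of $V(R)\cap D$ then form an infinite subset of $V(R)$ of finite diameter, so $R$ is not a metric ray. Conversely, and this is the part that needs the structure tree, I must check that a \emph{global} end always does contain a metric ray: here one uses the basic structure tree of $G$ (Corollary~\ref{basicTreeExists}), takes a ray in $\TF$ representing the end, and—as remarked after Lemma~\ref{infDiam}—uses Menger's theorem to build a ray in $G$ following it that picks up at most one vertex from each $\SF$-separator; since the separators along the tree ray are eventually pairwise disjoint (as in the proof of Lemma~\ref{infDiam}) and their pairwise distances in $G$ grow without bound, any infinite subset of the constructed ray has infinite diameter, i.e.\ it is a metric ray, and it lies in the global end. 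So global ends are exactly the metric ends.

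Putting this together: the metric ends of $G$ coincide with the global ends, and by the previous theorem $\Aut(G)$ acts transitively on the global ends; therefore it acts transitively on the metric ends. I would also remark that if $G$ has only one kind of end (all global, equivalently no local end), this recovers genuine end-transitivity, which is consistent with the locally finite case where local ends cannot occur at all and metric ends equal ends.

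The main obstacle I anticipate is the second half of the key step—verifying cleanly that every global end contains a metric ray—since it requires re-deriving the disjointness-and-growing-distance behaviour of $\SF$-separators along a tree ray (essentially re-running the argument inside the proof of Lemma~\ref{infDiam}) and then a Menger-type construction to route a single ray of $G$ through these separators one vertex at a time. The local case, by contrast, is essentially immediate from unpacking the definitions of ``local end'' and ``metric ray''. One could alternatively sidestep part of this by a direct inspection of the graphs listed in Theorem~\ref{mainTm2} (in each of $X_{\kappa,\lambda}(H)$, $Y_\kappa$, $Z_{\kappa,\lambda}(H_1,H_2)$ the metric ends are transparently the ``branch'' ends of the underlying tree, on which the automorphism group manifestly acts transitively), but the structure-tree argument has the advantage of uniformity.
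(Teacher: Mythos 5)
Your overall strategy---reducing to the preceding theorem by identifying the metric ends of $G$ with its global ends---is exactly the route the paper intends (the theorem is stated there without a written proof, as an immediate consequence of the classification and of the local/global dichotomy), and your two verifications are sound: a ray in a local end revisits a vertex set of finite diameter, so it picks up an infinite vertex subset of finite diameter and is not a metric ray; and a Menger-type ray threaded through the eventually pairwise disjoint, increasingly distant $\SF$-separators along a ray of the structure tree $\TF$ is a metric ray lying in a global end.

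There is, however, a gap in the reduction itself. What you prove is that every metric ray lies in a global end and that every global end contains a metric ray; this only yields a surjection from the set of metric ends onto the set of global ends. You also need injectivity, i.e.\ that any two metric rays belonging to the same (vertex-)end are metrically equivalent. A priori a single global end could split into several metric ends: two metric rays converging to the same end might still be separated eventually by an \emph{infinite} vertex set of finite diameter (such sets do disconnect these graphs---an infinite block of $X_{\aleph_0,2}$ has diameter $1$ and its removal leaves infinitely many components). If that happened, transitivity on global ends would not give transitivity on metric ends without the extra claim that the stabiliser of a global end permutes the metric ends inside it. The fix uses the machinery you already set up: a set $S$ of finite diameter lies in a ball of finite radius around some vertex $v$; since the separators $S_n$ along the tree ray towards the end satisfy $d(v,S_n)\to\infty$, for all large $n$ the separator $S_n$ and the entire wing beyond it avoid $S$, so the tails of any two rays of that end lie in a common component of $G-S$. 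With this added, each global end is a single metric end and your argument closes.
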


\end{document}